\def\multiset#1#2{\ensuremath{\left(\kern-.4em\left(\genfrac{}{}{0pt}{}{#1}{#2}\right)\kern-.4em\right)}}
\newcommand*{\xMin}{0}%
\newcommand*{\xMax}{6}%
\newcommand*{\yMin}{0}%
\newcommand*{\yMax}{3}%
\numberwithin{equation}{section}
\newtheorem{thm}{Theorem}[section]
\newtheorem{lem}[thm]{Lemma}
\newtheorem{prop}[thm]{Proposition}
\newtheorem{cor}[thm]{Corollary}
\theoremstyle{definition}
\newtheorem{defn}[thm]{Definition}
\newtheorem{problem}[thm]{Problem}
\newtheorem{remark}[thm]{Remark}
\newcommand{\ZZ}{\mathbb{Z}}
\newcommand\LL{\mathcal{L}}
\newcommand\wt{\operatorname{wt}}
\newcommand\gm{\mathsf{R}}
\newcommand\Mot{\mathsf{Mot}}
\newcommand\Sch{\mathsf{Sch}}
\newcommand\MS{\mathsf{MS}}
\newcommand\rgm{\widetilde{\mathsf{R}}}
\newcommand\dt{\operatorname{DT}}
\renewcommand\vec[1]{\mathbf{#1}}
\newcommand\vz{\bm{0}}
\newcommand\va{\bm{a}}
\newcommand\vb{\bm{b}}
\newcommand\vc{\bm{c}}
\newcommand\vl{\bm{\lambda}}
\newcommand\cL{\mathcal{L}}
\newcommand\Span{\operatorname{span}}
\newcommand\s{\Span}
\title{Combinatorics of generalized orthogonal polynomials of type \( R_{II} \)}
\author{Jang Soo Kim}
\address{Department of Mathematics, Sungkyunkwan University, Suwon,
  South Korea}
\email{jangsookim@skku.edu}
\author{Minho Song}
\address{Department of Mathematics, Sungkyunkwan University, Suwon,
  South Korea}
\email{smh3227@skku.edu}
\begin{document}

 \begin{abstract}
   In 1995, Ismail and Masson introduced orthogonal polynomials of
   types \( R_I \) and \( R_{II} \), which are defined by specific
   three-term recurrence relations with additional conditions. Recently,
   Kim and Stanton found a combinatorial interpretation for the
   moments of orthogonal polynomials of type \( R_I \) in the spirit
   of the combinatorial theory of orthogonal polynomials due to
   Flajolet and Viennot. In this paper, we push this combinatorial
   model further to orthogonal polynomials of type \( R_{II} \).
   Moreover, we generalize orthogonal polynomials of type \( R_{II} \)
   by relaxing some of their conditions. We then prove a master
   theorem, which generalizes combinatorial models for moments of
   various types of orthogonal polynomials: classical orthogonal
   polynomials, Laurent biorthogonal polynomials, and orthogonal
   polynomials of types \( R_I \) and \( R_{II} \).
 \end{abstract}

\maketitle


\section{Introduction}\label{sec:intro}

A (classical) \emph{orthogonal polynomial sequence}
\( (P_n(x))_{n\ge0} \) is a sequence of polynomials such that
\( \deg P_n(x) =n \) and
\( \cL(P_n(x)P_m(x)) = \kappa_n\delta_{n,m} \) with
\( \kappa_n\ne0 \), for some linear functional \( \cL \) defined on
the space of polynomials. It is well known that \( (P_n(x))_{n\ge0} \)
is a (monic) orthogonal polynomial sequence if and only if it
satisfies a three-term recurrence relation
\begin{equation}\label{eq:OPS_3term}
  P_{n+1}(x) = (x - b_n) P_n(x) - \lambda_n P_{n-1}(x), \quad n\ge0,
\end{equation}
where \( P_{-1}(x) = 0, P_0(x) = 1 \) and \( \lambda_n \ne 0 \) for \( n\ge1 \).

In this paper, we focus on two quantities of orthogonal polynomials:
moments and dual coefficients. The values \( \mu_n:=\cL(x^n) \) are
called the \emph{moments} of the orthogonal polynomials. By the
combinatorial theory of orthogonal polynomials due to Flajolet
\cite{Flajolet1980} and Viennot \cite{ViennotLN}, the moment
\( \mu_n \) is a generating function for the Motzkin paths from
\( (0,0) \) to \( (n,0) \). More generally, Viennot \cite{ViennotLN}
showed that the \emph{generalized moment}
\begin{equation}\label{eq:gen_mom}
 \mu_{n,r,s}:=\frac{\cL(x^nP_r(x)P_s(x))}{\cL(P_s(x)^2)}
\end{equation}
is a generating function for the Motzkin paths from \( (0,r) \) to
\( (n,s) \).

For a sequence \( (P_n(x))_{n\ge0} \) of any polynomials with
\( \deg P_n(x) =n \), the \emph{(generalized) dual coefficients}
\( \tau_{n,r,s} \) are defined by
\[
  x^nP_r(x)=\sum_{s\ge0}\tau_{n,r,s}P_s(x).
\]
If \( (P_n(x))_{n\ge0} \) is a sequence of orthogonal polynomials,
then, by the orthogonality, the dual coefficient \( \tau_{n,r,s} \) is
equal to the generalized moment \( \mu_{n,r,s} \).

\medskip

In 1995, Ismail and Masson~\cite{IsmailMasson} introduced orthogonal
polynomials of types \( R_I \) and \( R_{II} \). \emph{Orthogonal
  polynomials \( P^{I}_n(x) \) of type \( R_I \)} are defined by the
three-term recurrence
  \begin{equation}\label{eq:3rr-1}
    P^{I}_{n+1}(x) = (x - b_n) P^{I}_n(x) - (a_nx + \lambda_n) P^{I}_{n-1}(x), \quad n\ge0, 
\end{equation}
where \( P^{I}_{-1}(x) = 0 \) and \( P^{I}_0(x) = 1 \), and
\( a_n\ne0 \) and \( P^I_n(-\lambda_n/a_n)\ne0 \) for \( n\ge1 \). Let
\[
    d^{I}_m(x)=\prod_{i=1}^m(a_ix+\lambda_i),\qquad  Q^{I}_m(x)=\frac{P^{I}_m(x)}{d^{I}_m(x)}.
\]
Ismail and Masson~\cite[Theorem~2.1]{IsmailMasson} (see also
\cite[Theorem~2.1]{kimstanton:R1}) showed that there is a unique
linear functional \( \cL^{I} \) on the vector space
\( W^{I} := \Span\{x^nQ^{I}_m(x): 0\le n<m\} \) such that \( \cL^{I}(1) = 1 \)
and \( \cL^{I}(x^n Q^{I}_m(x)) = 0 \) for \( 0\le n<m \).

Kim and Stanton \cite{kimstanton:R1} presented a combinatorial theory
of orthogonal polynomials of type \( R_{I} \). In particular, they
found combinatorial interpretations of the moments
\( \mu^{I}_n:=\cL^{I}(x^n) \) and the generalized moments
\( \mu^{I}_{n,r,s}:=\cL^{I}(x^n P^{I}_r(x)Q^{I}_s(x)) \) using
Motzkin--Schr\"oder paths. This generalizes a result of Kamioka
\cite{Kamioka2007} that the moments of Laurent biorthogonal
polynomials are generating functions for Schr\"oder paths.

\medskip

The main objective of this paper is to provide a combinatorial theory
of orthogonal polynomials of type \( R_{II} \) that generalizes the
previous results on classical orthogonal polynomials, Laurent
biorthogonal polynomials, and orthogonal polynomials of type
\( R_I \). \emph{Orthogonal polynomials \( P^{II}_n(x) \) of type
  \( R_{II} \)} are defined by
\( P^{II}_{-1}(x) = 0, P^{II}_0(x) = 1 \), and
  \begin{equation}\label{eq:3rr-2}
    P^{II}_{n+1}(x) = (x - b_n) P^{II}_n(x) - (c_nx^2 + a_nx + \lambda_n) P^{II}_{n-1}(x), \quad n\ge0,
\end{equation}
with the assumptions that \( c_n\ne0 \), \( P^{II}_n(\alpha_n)\ne0 \),
and \( P^{II}_n(\beta_n)\ne0 \) for \( n\ge1 \), where \( \alpha_n \)
and \( \beta_n \) are the zeros of \( c_nx^2+a_nx+\lambda_n \). Let
\[
    d^{II}_m(x)=\prod_{i=1}^m(c_ix^2+a_ix+\lambda_i),\qquad  Q^{II}_m(x)=\frac{P^{II}_m(x)}{d^{II}_m(x)}.
\]
Ismail and Masson~\cite[Theorem~3.5]{IsmailMasson} showed that,
for fixed \( N_0 \) and \( N_1 \), there
is a unique linear functional \( \cL^{II} \) on the vector space
\begin{equation}\label{eq:VII}
  V^{II}:=\Span\{x^nQ^{II}_m(x):0\le n<m\}
\end{equation}
such that \( \cL^{II}(1) = N_0 \), \( \cL^{II}(xQ^{II}_1(x)) = N_1 \),
and \( \cL^{II}(x^n Q^{II}_m(x)) = 0 \) for \( 0\le n<m \). See
\cite{Ismail2019,Shukla2023} for recent work on specific orthogonal
polynomials of type \( R_{II} \).

There are two crucial differences between orthogonal polynomials of
type \( R_{II} \) and classical orthogonal polynomials, as well as
orthogonal polynomials of type \( R_I \). The first difference is
that, unlike \( P_n(x) \) and \( P^{I}_n(x) \), defined in
\eqref{eq:OPS_3term} and \eqref{eq:3rr-1}, respectively, the
orthogonal polynomials \( P^{II}_n(x) \) of type \( R_{II} \), defined
in \eqref{eq:3rr-2}, are not monic. This induces a significantly
different behavior in their dual coefficients, which will be
explained shortly. The second difference is that the vector space
\( W^{II} \), given in \eqref{eq:VII}, does not contain \( x^n \) for
\( n\ge1 \). This means that the moments
\( \mu^{II}_n = \cL^{II}(x^n) \) are not defined, let alone the
generalized moments
\( \mu^{II}_{n,r,s} = \cL^{II}(x^nP^{II}_r(x)Q^{II}_s(x)) \). Our
strategy to overcome this limitation is to simply consider a linear
functional defined on the larger space
\begin{equation}\label{eq:WII}
  W^{II}:=\Span\{x^nQ^{II}_m(x): n,m\ge0\}.
\end{equation}

It may seem somewhat artificial to consider the space \( W^{II} \)
only to include \( x^n \) for \( n\ge1 \). However, this approach
offers two benefits. First, it allows us to develop a combinatorial
theory of orthogonal polynomials of type \( R_{II} \) that generalizes
previous results on classical orthogonal polynomials, Laurent
biorthogonal polynomials, and orthogonal polynomials of type
\( R_I \). Second, it enables us to discover a combinatorial
interpretation for the dual coefficients \( \tau^{II}_{n,r,s} \) defined
below, which is similar to our combinatorial model for
\( \mu^{II}_{n,r,s} \).

\medskip

Let \( P_n(x) \) be the orthogonal polynomials defined in
\eqref{eq:OPS_3term}. Since \( P_n(x) \) are monic, their dual
coefficients are polynomials in variables \( \vb = (b_0,b_1,\dots) \)
and \( \vl = (\lambda_1,\lambda_2,\ldots) \). Moreover, since
\( \tau_{n,r,s} = \mu_{n,r,s} \), the dual coefficients
\( \tau_{n,r,s} \) are, in fact, polynomials with nonnegative integer
coefficients. See \cite{LHT} and \cite{Corteel2023} for recent work on
dual coefficients of orthogonal polynomials.

Now consider the polynomials \( P^{II}_n(x) \) defined in
\eqref{eq:3rr-2} and let \( \tau^{II}_{n,r,s} \) denote the their dual
coefficients. Since \( P^{II}_n(x) \) are not monic, we cannot expect
their dual coefficients to be polynomials. Indeed, we have
\[
  x^3 = \frac{P^{II}_3(x)}{1-c_{1}-c_{2}}
+ \frac{A\cdot P^{II}_2(x)}{{\left(1-c_{1}-c_{2}\right)} {\left(1-c_{1}\right)}}
+ \frac{B\cdot P^{II}_1(x)}{{\left(1-c_{1}-c_{2}\right)} {\left(1-c_{1}\right)}}
+ \frac{C\cdot P^{II}_0(x)}{{\left(1-c_{1}-c_{2}\right)} {\left(1-c_{1}\right)}},
\]
where
\begin{align*}
  A &= a_{1} + a_{2} + b_{0} + b_{1} + b_{2} - b_{2} c_{1} - b_{0} c_{2},\\
  B &=  a_{2} b_{0} c_{1} + b_{0} b_{1} c_{1} - a_{1} b_{0}
      c_{2} - b_{0}^{2} c_{2} - b_{0} b_{1} c_{2} + a_{1}^{2} +
      a_{1} a_{2} + 2 \, a_{1} b_{0}\\
    & \qquad   + b_{0}^{2} + 2 \, a_{1} b_{1} + a_{2} b_{1} + b_{0} b_{1} + b_{1}^{2} - c_{1}
      \lambda_{1} - c_{1} \lambda_{2} + \lambda_{1} +
      \lambda_{2},\\
  C &= a_{2} b_{0}^{2} c_{1} + b_{0}^{2} b_{1} c_{1} - a_{1} b_{0}^{2} c_{2} - b_{0}^{3} c_{2} + a_{1}^{2} b_{0} + a_{1} a_{2} b_{0} + 2 \, a_{1} b_{0}^{2} + b_{0}^{3}\\
  &\qquad + a_{1} b_{0} b_{1} - b_{0} c_{1} \lambda_{1} - b_{0} c_{2} \lambda_{1} + a_{1} \lambda_{1} + a_{2} \lambda_{1} + 2 \, b_{0} \lambda_{1} + b_{1} \lambda_{1}.
\end{align*}
However, surprisingly, if we expand the dual coefficient
\( \tau^{II}_{n,r,s} \) as a formal power series, then the coefficients
are nonnegative integers. For example, the dual coefficient
\( \tau^{II}_{3,0,1} = \frac{B}{(1-c_{1}-c_{2})(1-c_{1})} \) is equal to
\[
  \lambda_{1} + \lambda_{2} +
  a_{1}^{2} + a_{1} a_{2} + 2 \, a_{1} b_{0} + b_{0}^{2} + 2 \, a_{1}
  b_{1} + a_{2} b_{1} + b_{0} b_{1} + b_{1}^{2} + c_{1} \lambda_{1} +
  c_{2} \lambda_{1} + c_{1} \lambda_{2} + c_{2} \lambda_{2}
  + T,
\]
where \( T \) is a formal power series in which every term has total degree
greater than \( 2 \).

In this paper, we show that \( \tau^{II}_{n,r,s} \) is a formal power
series with nonnegative integer coefficients by giving its
combinatorial interpretation. We note an interesting fact that,
contrary to the case of classical orthogonal polynomials, for
orthogonal polynomials of type \( R_{II} \) (and also for orthogonal
polynomials of type \( R_I \)), the dual coefficient
\( \tau^{II}_{n,r,s} \) is not equal to the generalized moment
\( \mu^{II}_{n,r,s} \). We will find combinatorial models for
\( \tau^{II}_{n,r,s} \) and \( \mu^{II}_{n,r,s} \), which are closely
related. In fact, this is how the authors discovered the combinatorial
model for \( \tau^{II}_{n,r,s} \) in the first place; they found a
combinatorial model for \( \mu^{II}_{n,r,s} \) and an identity,
see \eqref{eq:2}, which naturally suggests a combinatorial model for
\( \tau^{II}_{n,r,s} \).

\medskip

The structure of the paper is as follows. In~\Cref{sec:Pre}, we
provide basic definitions and review some known results.
In~\Cref{sec:comb-interpr-gener}, we find some bases of the vector
space \( W^{II}=\s\{x^nQ^{II}_m(x):n,m\ge0\} \) and construct a linear
functional on \( W^{II} \). We then give a combinatorial
interpretation for the generalized moments \( \mu^{II}_{n,r,s} \).
In~\Cref{sec:gener-orth-polyn}, we generalize orthogonal polynomials
of type \( R_{II} \) by relaxing some of their conditions. We prove a
master theorem, which generalizes all combinatorial models for
generalized moments of various types of orthogonal polynomials.
In~\Cref{subsec:Comb}, we provide a combinatorial interpretation for
the dual coefficient \( \tau^{II}_{n,r,s} \).
In~\Cref{subsec:conv}, we propose two sufficient conditions for the
convergence of \( \mu_{n,r,s} \).
In~\Cref{sec:moments-with-const}, we
consider the moments of orthogonal polynomials of type \( R_{II} \)
when \( (b_n)_{n\ge0} \), \( (\lambda_n)_{n\ge1} \),
\( (a_n)_{n\ge1} \), and \( (c_n)_{n\ge1} \) are constant sequences.
We show that, in this case, the moment of orthogonal polynomials of
type \( R_{II} \) can be written as the moment of some classical
orthogonal polynomials and also as the moment of some orthogonal
polynomials of type \( R_I \). 

\section{Preliminaries}\label{sec:Pre}

In this section, we introduce basic definitions and known results. We
start with the definition of orthogonal polynomials of type
\( R_{II} \), introduced by Ismail and Masson~\cite{IsmailMasson}.

\begin{defn}\label{def:R2}
  We say that \( (P_n(x))_{n\ge0} \) is a sequence of \emph{orthogonal polynomials of
    type \( R_{II} \)} if
  \( P_{-1}(x) = 0, P_0(x) = 1 \), and
\begin{equation}\label{eq:three_term}
   P_{n+1}(x) = (x - b_n) P_n(x) - (c_nx^2 + a_nx + \lambda_n) P_{n-1}(x), \quad n\ge0, 
\end{equation}
for some sequences \( \vb=(b_0,b_1,\ldots) \),
\( \vl=(\lambda_1,\lambda_2,\ldots)\), \( \va=(a_1,a_2,\ldots) \), and
\( \vc=(c_1,c_2,\ldots) \) such that \( c_n\neq0 \),
\( P_n(\alpha_n)\ne0 \), and \( P_n(\beta_n)\ne0 \) for \( n\ge1 \),
where \( \alpha_n \) and \( \beta_n \) are the roots of
\( c_nx^2 + a_nx + \lambda_n \).

Given the polynomials \( P_n(x) \), we will use the following
notation:
\begin{align*}
  d_n(x)&=\prod_{i=1}^n(c_ix^2+a_ix+\lambda_i), & Q_n(x)&=\frac{P_n(x)}{d_n(x)},\\
  e_n(x)&=\prod_{i=1}^n(x-\alpha_i), & f_n(x)&=\prod_{i=1}^n(x-\beta_i).
\end{align*}
Here, we use the standard convention that
the empty product is defined to be \( 1 \); for example,
\( d_0(x)=1 \).
We also define the vector space
\begin{equation}\label{eq:W}
  W =\s \{x^nQ_m(x):n,m\ge0\}.
\end{equation}
\end{defn}

Ismail and Masson \cite{IsmailMasson} showed that orthogonal
polynomials of type \( R_{II} \) have partial orthogonality as
follows.

\begin{thm} \cite[Theorem 3.5]{IsmailMasson} For the polynomials
  \( P_n(x) \) defined in \Cref{def:R2} and for any numbers \( N_0 \)
  and \( N_1 \), there is a unique linear functional \(\LL\) on
  \( W \) such that \( \LL(1)=N_0 \), \( \LL(xQ_1(x))=N_1 \), and
  \( \LL(x^n Q_m(x)) = 0 \) for \( 0\le n<m \).
\end{thm}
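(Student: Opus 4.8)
The plan is to determine $\LL$ from the \emph{pole structure} of the rational functions $x^nQ_m(x)$, rather than directly from the three-term recurrence. First I would record the relevant degree facts. Since $c_n\neq0$, the polynomial $d_n(x)$ has degree exactly $2n$, while $P_n(x)$ has degree $n$; consequently $x^nQ_m(x)=x^nP_m(x)/d_m(x)$ is a proper rational function (vanishing at $\infty$) precisely when $n<m$, and equals a nonzero constant plus a proper fraction when $n=m$. In particular the functions occurring in the hypotheses, namely $1$, $xQ_1$, and $x^nQ_m$ for $0\le n<m$, span exactly $\CC\cdot 1$ together with a space of proper fractions. Moreover, because $P_m(\alpha_m)\neq0$ and $P_m(\beta_m)\neq0$, the function $Q_m(x)$ genuinely has poles at $\alpha_m$ and $\beta_m$; these are the ``new'' poles appearing at level $m$, and this non-vanishing is what will drive the whole argument.

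Next I would build a filtration adapted to these poles. For $m\ge0$ let $W_m=\s\{x^nQ_j: 0\le n\le j\le m\}$, so that $W_0=\CC\cdot 1$ and $W_0\subseteq W_1\subseteq\cdots$. Each $W_m$ is the direct sum of $\CC\cdot 1$ and the space of proper fractions with poles confined to $\{\alpha_1,\beta_1,\dots,\alpha_m,\beta_m\}$, and passing to the quotient $W_m/W_{m-1}$ an element is detected by its principal part at the two new points $\alpha_m,\beta_m$, so $\dim W_m/W_{m-1}=2$. The crux is the claim that for every $m\ge1$ the images of $Q_m$ and $xQ_m$ in $W_m/W_{m-1}$ are linearly independent, hence form a basis of that quotient. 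When $\alpha_m\neq\beta_m$ this reduces to showing
\[
\det\begin{pmatrix} \dfrac{P_m(\alpha_m)}{d_m'(\alpha_m)} & \dfrac{P_m(\beta_m)}{d_m'(\beta_m)}\\[2ex] \dfrac{\alpha_m P_m(\alpha_m)}{d_m'(\alpha_m)} & \dfrac{\beta_m P_m(\beta_m)}{d_m'(\beta_m)}\end{pmatrix}\neq0,
\]
which equals $(\beta_m-\alpha_m)\,P_m(\alpha_m)P_m(\beta_m)/\bigl(d_m'(\alpha_m)d_m'(\beta_m)\bigr)$ and is nonzero by the hypotheses of \Cref{def:R2}; in the double-root case $\alpha_m=\beta_m$ the same conclusion follows by comparing the two Laurent coefficients, where the leading coefficient of the order-two pole is again nonzero since $P_m(\alpha_m)\neq0$.

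With this claim, uniqueness and existence follow by induction along the filtration. For $m\ge2$ both $Q_m=x^0Q_m$ and $xQ_m=x^1Q_m$ already lie among the prescribed elements with $0\le n<m$, so their values (both $0$) together with $\LL$ on $W_{m-1}$ determine $\LL$ on all of $W_m$; at the bottom, $W_0=\CC\cdot 1$ is fixed by $\LL(1)=N_0$, and the two-dimensional quotient $W_1/W_0$ is spanned by $Q_1$ (a zero-valued element) and $xQ_1$, whose value is supplied by $\LL(xQ_1)=N_1$. This pins down $\LL$ on every $W_m$, giving uniqueness. For existence I would run the same induction constructively: declare $\LL$ to vanish on $\s\{x^nQ_m:0\le n<m\}$ and to take the values $N_0,N_1$ on $1,xQ_1$, and verify level by level that this is well defined, which holds precisely because $\{Q_m,xQ_m\}$ is a basis of each successive quotient.

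The main obstacle I expect is the linear-independence claim for the principal parts of $Q_m$ and $xQ_m$, which is exactly where all the nondegeneracy assumptions $c_n\neq0$, $P_m(\alpha_m)\neq0$, $P_m(\beta_m)\neq0$ are consumed. A secondary technical nuisance is the bookkeeping required when roots coincide, either $\alpha_m=\beta_m$ or a root at level $m$ equalling one at a lower level; in the latter situation the ``new'' directions in $W_m/W_{m-1}$ are higher-order principal parts at the repeated point, and one must track pole orders carefully to see that $\{Q_m,xQ_m\}$ still surjects onto the two new dimensions.
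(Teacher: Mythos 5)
Your argument is sound in substance and rests on the same underlying fact as the paper, but it is organized differently, so a comparison is worth recording. The paper does not prove this theorem in place (it is quoted from Ismail--Masson); its justification is the machinery of Section 3 together with \Cref{rem:3}: one shows that $\{Q_n(x):n\ge1\}\cup\{xQ_n(x):n\ge1\}$ (together with $1$) is a basis of the relevant span, obtaining linear independence from the asymptotics $1/d_m(x)\sim x^{-2m}$ and $x/d_m(x)\sim x^{-2m+1}$ at infinity, and obtaining the spanning property from the division algorithm \eqref{eq:iteration V'} plus explicit partial-fraction manipulations at $\alpha_m,\beta_m$ — which is where $P_m(\alpha_m)\ne0$ and $P_m(\beta_m)\ne0$ enter. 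You permute these ingredients: spanning comes from a dimension count against the ambient space of rational functions with poles among $\{\alpha_i,\beta_i: i\le m\}$ (dimension $2m+1$), while the nondegeneracy hypotheses are consumed in the linear independence of the principal parts of $Q_m$ and $xQ_m$ at the two new poles. Your determinant $(\beta_m-\alpha_m)P_m(\alpha_m)P_m(\beta_m)/(d_m'(\alpha_m)d_m'(\beta_m))$ is correct in the generic case, the confluent case gives determinant $u^2$ with $u=P_m(\alpha_m)/e(\alpha_m)\ne0$, and the deferred bookkeeping for roots repeating across levels does go through by counting pole orders with multiplicity. One presentational circularity to repair: you assert that $W_m$ equals $\CC\cdot1$ plus the \emph{full} space of proper fractions with the prescribed poles before proving the crux claim, but that equality is itself a consequence of the crux claim via the dimension count, so the two statements must be established together in a single induction. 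Your route is shorter for existence and uniqueness; the paper's explicit division-algorithm reductions carry extra information that is reused later (good bases, \Cref{thm:master}), which is why it takes the longer road.

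One caveat concerns the statement rather than your argument. Your filtration $W_m=\s\{x^nQ_j:0\le n\le j\le m\}$ exhausts only $\s\{x^nQ_m:0\le n\le m\}$, whereas the paper's $W$ in \Cref{def:R2} is $\s\{x^nQ_m:n,m\ge0\}$ and contains every power $x^n$. On that larger space the stated conditions cannot determine $\LL$ uniquely: by \Cref{thm:basis_V} the values $\LL(x^n)$ for $n\ge1$ are free parameters, and the paper itself fixes them by a separate choice in \Cref{def:ext_linear_fct}. So you have proved the theorem for the space on which it is actually true — the one in Ismail--Masson and in \Cref{rem:3} — and you should state explicitly that this is what you mean by $W$.
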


Now we review known combinatorial properties of these orthogonal
polynomials, which will be generalized to orthogonal polynomials of
type \( R_{II} \) in this paper. To put them all in one framework, we
introduce \( R_{II} \) paths.

A \emph{lattice path} is a sequence \( p=(p_0,p_1,\dots,p_n) \) of
points \( p_i\in \ZZ \times \ZZ_{\ge0} \). For \( i\ge1 \), the
\( i \)th \emph{step} \( s_i \) of \( p \) is defined by
\( s_i = p_{i} - p_{i-1} \). We will also consider the path \( p \) as
the sequence \( s_1\dots s_n \) of its steps (together with its
starting point \( p_0 \)).

\begin{defn}\label{def:gen_MS}
  An \emph{\( R_{II} \) path} is a lattice path consisting of \emph{up
    steps} \( U=(1,1) \), \emph{horizontal steps} \( H=(1,0) \),
  \emph{down steps} \( D=(1,-1) \), \emph{vertical-down steps}
  \( V=(0,-1) \), and \emph{backward-down steps} \( B=(-1,-1) \). Denote by
  \( \gm_{n,r,s} \) the set of \( R_{II} \) paths from \( (0,r) \) to
  \( (n,s) \). We also write \( \gm_{k,m}=\gm_{k,0,m} \) and
  \( \gm_k = \gm_{k,0,0} \).
\end{defn}

For later use, we also define restricted \( R_{II} \) paths.

\begin{defn}\label{def:rgm}
  A \emph{restricted \( R_{II} \) path} is an \( R_{II} \)-path in
  \( \gm_{n,r,s} \) that immediately ends when it touches the line
  \( x=n \).  We denote by \( \rgm_{n,r,s} \) the set of restricted
  \( R_{II} \) paths in \( \gm_{n,r,s} \). The additional condition used to define restricted \( R_{II} \) paths from \( R_{II} \) paths will be referred to as the \emph{restricted condition.} 
\end{defn}

For an \( R_{II} \) path \( p = s_1\dots s_k \), we define the
\emph{weight} \( \wt(p) \) of \( p \) by
\( \wt(p)=\prod_{t=1}^k\wt(s_t) \), where
\begin{equation}\label{eq:weight_GM}
  \wt(s_t)=\begin{cases}
            1 & \mbox{if } s_t=U,\\
            b_i & \mbox{if } s_t=H \mbox{ starting at height \( i \)},\\
            \lambda_i & \mbox{if } s_t=D \mbox{ starting at height \( i \)},\\
            a_i & \mbox{if } s_t=V \mbox{ starting at height \( i \)},\\
            c_i & \mbox{if } s_t=B \mbox{ starting at height \( i \)}.
           \end{cases}
\end{equation}
See \Cref{fig:generalized_Motz_Sch}. Note that steps \( U \) and
\( B \) can be overlapped in an \( R_{II} \) path as shown in
\Cref{fig:generalized_Motz_Sch}. Thus, there can be infinitely many
elements in \( \gm_{n,r,s} \) and
\( \sum_{p\in \gm_{n,r,s}}\wt(p) \) is a formal power series in the
variables in \( \vb,\vl,\va \), and \( \vc \).

\begin{figure}
  \tikzset{
    set arrow inside/.code={\pgfqkeys{/tikz/arrow inside}{#1}},
    set arrow inside={end/.initial=>, opt/.initial=},
    /pgf/decoration/Mark/.style={
        mark/.expanded=at position #1 with
        {
          \noexpand\arrow[\pgfkeysvalueof{/tikz/arrow inside/opt}]{\pgfkeysvalueof{/tikz/arrow inside/end}}
        }
    },
    arrow inside/.style 2 args={
        set arrow inside={#1},
        postaction={
            decorate,decoration={
                markings,Mark/.list={#2}
            }
        }
    },
}
  \centering
  \begin{tikzpicture}[scale=1]
   \foreach \i in {\xMin,...,\xMax} {
        \draw [very thin,gray] (\i,\yMin) -- (\i,\yMax)  node [below] at (\i,\yMin) {$\i$};
    }
    \foreach \i in {\yMin,...,\yMax} {
        \draw [very thin,gray] (\xMin,\i) -- (\xMax,\i) node [left] at (\xMin,\i) {$\i$};
    }

    \draw (0,0) -- (1,1);
    \draw[very thick] (1,1) -- (2,2);
    \draw (2,2) -- (2,1);
    \draw (2,1) -- (4,3);
    \draw (4,2) -- (4,3);
    \draw (3,1) -- (4,2);
    \draw (3,1) -- (4,0);
    \draw (4,0) -- (5,0);
    \draw (5,0) -- (6,1);

    \node at (1.3,1.7){\( c_2^2 \)}; 
    \node at (2.2,1.5){\( a_2 \)};
    \node at (4.2,2.5){\( a_3 \)};
    \node at (3.3,1.6){\( c_2 \)};
    \node at (3.7,.6){\( \lambda_1 \)};
    \node at (4.5,.2){\( b_0 \)};
  \end{tikzpicture}
  \caption{An \( R_{II} \) path \( P = UUBUBUVUUVBDHU\in\gm_{6,1} \)
    with \( \wt(P)=a_2a_3b_0c^3_2\lambda_1. \) The thick segment from
    \( (1,1) \) to \( (2,2) \) represents \( UBUBU \).}
  \label{fig:generalized_Motz_Sch}
\end{figure}

A \emph{Motzkin path} is an \( R_{II} \) path without vertical-down
steps and backward-down steps. A \emph{Schr\"oder path} is an \( R_{II} \)
path without down steps and backward-down steps. A
\emph{Motzkin--Schr\"oder path} is an \( R_{II} \) path without
backward-down steps. Let \( \Mot_{n,r,s} \) (resp.~\( \Sch_{n,r,s} \) and
\( \MS_{n,r,s} \)) denote the set of Motzkin paths (resp. Schr\"oder
paths and Motzkin--Schr\"oder paths) from \( (r,0) \) to \( (n,s) \).

We are now ready to state known results on various orthogonal
polynomials. Note that for the case of classical orthogonal
polynomials, by the well-known fact
\( \LL(P_n(x)^2) = \lambda_1 \cdots \lambda_n \), we have
\[
  Q_n(x) = \frac{P_n(x)}{d_n(x)}
  = \frac{P_n(x)}{\lambda_1 \cdots \lambda_n}
  = \frac{P_n(x)}{\LL(P_n(x)^2)}.
\]

\begin{thm}\label{thm:mot} \cite[Proposition 17, pp. I–15]{ViennotLN}
  Let \( P_n(x) \) be the (classical) orthogonal polynomials
  given by \eqref{eq:three_term}
  with \( \vc = \va = \vz \). Then there
  is a linear functional \( \LL \) on \( W \) such that
  \[
    \LL(x^n P_r(x) Q_s(x)) =
    \sum_{p\in \Mot_{n,r,s}}  \wt(p).
  \]
\end{thm}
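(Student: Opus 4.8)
The plan is to reduce the statement to two ingredients: a combinatorial description of the dual coefficients of the classical polynomials, and the orthogonality of \( \LL \). Write \( x^nP_r(x)=\sum_{s\ge0}\tau_{n,r,s}P_s(x) \) for the expansion of \( x^nP_r(x) \) in the basis \( (P_s(x))_{s\ge0} \). Since \( \vc=\va=\vz \), the recurrence \eqref{eq:three_term} reads \( xP_s(x)=P_{s+1}(x)+b_sP_s(x)+\lambda_sP_{s-1}(x) \), and I claim that \( \tau_{n,r,s}=\sum_{p\in\Mot_{n,r,s}}\wt(p) \). Granting this, let \( \LL \) be the standard moment functional on \( W \), that is, the linear functional normalized by \( \LL(1)=1 \) for which orthogonality holds (note \( Q_0(x)=1 \), so \( W \) contains every \( x^n \)); by the fact recalled just before the statement, \( \LL(P_t(x)P_s(x))=\delta_{t,s}\lambda_1\cdots\lambda_s \). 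Using \( Q_s(x)=P_s(x)/(\lambda_1\cdots\lambda_s) \) and applying \( \LL \) to \( x^nP_r(x)Q_s(x)=\sum_t\tau_{n,r,t}P_t(x)Q_s(x) \) then collapses the sum to the single term \( t=s \) and yields \( \LL(x^nP_r(x)Q_s(x))=\tau_{n,r,s} \), which is the desired path sum. So the whole theorem rests on the claim about \( \tau_{n,r,s} \).

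To prove the claim I would induct on \( n \). For \( n=0 \) the only path in \( \Mot_{0,r,s} \) is the empty path, present exactly when \( r=s \) and of weight \( 1 \), matching \( x^0P_r(x)=P_r(x) \). For the inductive step, set \( M_{n,r,s}:=\sum_{p\in\Mot_{n,r,s}}\wt(p) \), assume \( x^nP_r(x)=\sum_sM_{n,r,s}P_s(x) \), multiply by \( x \), and substitute the recurrence to read off the coefficient of \( P_t(x) \) in \( x^{n+1}P_r(x) \) as
\[
  M_{n,r,t-1}+b_tM_{n,r,t}+\lambda_{t+1}M_{n,r,t+1}.
\]
On the combinatorial side I would classify a path in \( \Mot_{n+1,r,t} \) by its last step: an up step from height \( t-1 \) (weight \( 1 \)), a horizontal step at height \( t \) (weight \( b_t \)), or a down step from height \( t+1 \) (weight \( \lambda_{t+1} \)), removing which leaves a path in \( \Mot_{n,r,t-1} \), \( \Mot_{n,r,t} \), or \( \Mot_{n,r,t+1} \) respectively. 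This gives \( M_{n+1,r,t}=M_{n,r,t-1}+b_tM_{n,r,t}+\lambda_{t+1}M_{n,r,t+1} \), exactly the coefficient above, completing the induction.

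The computation is routine, and the only points demanding care are the boundary effects at height \( 0 \): in the algebra the term \( \lambda_0P_{-1}(x) \) vanishes because \( P_{-1}(x)=0 \), while on the path side the definition of a lattice path forbids points below height \( 0 \), so there is no down step leaving height \( 0 \); these two conventions match. I would also note that, since \( \va=\vc=\vz \) excludes vertical-down and backward-down steps, every path in \( \Mot_{n,r,s} \) has exactly \( n \) steps, so \( \Mot_{n,r,s} \) is finite and \( M_{n,r,s} \) is a genuine polynomial in the \( b_i \) and \( \lambda_i \); no convergence issue arises, and \( \tau_{n,r,s}=\mu_{n,r,s} \) here, in contrast to the type \( R_{II} \) case treated later. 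I do not anticipate a genuine obstacle: the crux is simply that the last-step decomposition of Motzkin paths mirrors the three-term recurrence term by term, with the height of a path recording the index of the polynomial.
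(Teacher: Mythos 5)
Your proof is correct, but it takes a genuinely different route from the paper's. The paper cites this statement from Viennot and, in its own development, recovers it as a special case of the master theorem (\Cref{thm:master}): one takes the good basis indexed by \( \{(n,m): m=0\} \), \emph{defines} \( \LL(x^n) \) to be the Motzkin path sum, extends by linearity, and propagates the identity \( \LL(x^nQ_m(x))=\sum_{p\in\Mot_{n,m}}\wt(p) \) to all \( (n,m) \) by matching the three-term recurrence against the last-step decomposition, then handles general \( r \) by a first-step induction as in \Cref{thm:mu_nrs}. You instead prove the dual-coefficient formula \( \tau_{n,r,s}=\sum_{p\in\Mot_{n,r,s}}\wt(p) \) directly by induction on \( n \) (again via the last-step decomposition), take the existence of the orthogonality functional with \( \LL(P_t(x)P_s(x))=\delta_{t,s}\lambda_1\cdots\lambda_s \) as given (Favard's theorem, which the paper also treats as well known), and conclude via the identity \( \LL(x^nP_r(x)Q_s(x))=\tau_{n,r,s} \), which is exactly \eqref{eq:7}. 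In effect you reverse the paper's logical order: the paper derives \Cref{cor:mot2} from \Cref{thm:mot} using \eqref{eq:7}, whereas you prove \Cref{cor:mot2} first and deduce \Cref{thm:mot}. Your route is clean and self-contained for the classical case (and correctly handles the boundary at height \( 0 \) and finiteness of \( \Mot_{n,r,s} \)), but it leans on the coincidence \( \tau_{n,r,s}=\mu_{n,r,s} \), which the paper emphasizes breaks down for types \( R_I \) and \( R_{II} \); the paper's construction of \( \LL \) on a good basis is what allows a uniform treatment of all four families, at the cost of having to argue that the basis relations determine \( \LL \) consistently.
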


\begin{thm}\label{thm:sch} \cite[Theorem~17]{Kamioka2007}
  Let \( P_n(x) \) be the Laurent biorthogonal polynomials given by
  \eqref{eq:three_term} with \( \vc = \vl = \vz \). Then there is a
  linear functional \( \LL \) on \( W \) such that
  \[
    \LL(x^n P_r(x) Q_s(x)) =
    \sum_{p\in \Sch_{n,r,s}}  \wt(p).
  \]
\end{thm}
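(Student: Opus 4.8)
The plan is to prove \Cref{thm:sch} by reducing it to the combinatorial theory of Laurent biorthogonal polynomials, or alternatively by giving a self-contained transfer-matrix argument parallel to the classical Motzkin-path proof of \Cref{thm:mot}. Setting \( \vc = \vl = \vz \), the recurrence \eqref{eq:three_term} becomes \( P_{n+1}(x) = (x-b_n)P_n(x) - a_n x\, P_{n-1}(x) \), and the relevant paths are Schr\"oder paths, which by \Cref{def:gen_MS} are \( R_{II} \) paths using only up steps \( U \), horizontal steps \( H \), and vertical-down steps \( V \) (no \( D \) and no \( B \)), with weights \( 1 \), \( b_i \), and \( a_i \) respectively. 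First I would make precise the linear functional: as in \Cref{thm:mot}, the orthogonality determines \( \LL \) on the basis \( \{x^n Q_m(x)\} \) of \( W \) via \( \LL(x^n Q_m(x)) = 0 \) for \( 0\le n<m \), together with the normalization \( \LL(1)=1 \). The goal statement is the formula \( \LL(x^n P_r(x) Q_s(x)) = \sum_{p\in \Sch_{n,r,s}} \wt(p) \).

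The core of the argument is a \emph{path interpretation of the recurrence via a transfer matrix}. I would define, for each fixed abscissa, a state given by the current height, and encode one unit of horizontal progress by the operator that reads off the allowed steps: an \( U \) from height \( i \) to \( i+1 \) with weight \( 1 \), an \( H \) staying at height \( i \) with weight \( b_i \), and a \( D \) from height \( i \) to \( i-1 \) with weight \( \lambda_i \) — except that here \( \lambda_i = 0 \), so genuine \( D \) steps vanish, and downward motion is instead realized by \emph{vertical-down} steps \( V \) which do not advance the \( x \)-coordinate. The key subtlety, and the reason Schr\"oder rather than Motzkin paths appear, is that a step advancing the \( x \)-coordinate from height \( i \) may be preceded by any number of \( V \) steps, each contributing a factor \( a_j \); summing the geometric-like contribution of a diagonal \( U \) step together with a string of \( V \) steps reproduces exactly the coefficient \( (x - b_n) \) acting against \( a_n x \) in \eqref{eq:three_term}. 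Concretely, I would show that the recurrence is equivalent to the statement that \( x^n Q_m(x) \) expands in the basis \( \{Q_\ell(x)\} \) with coefficients counting weighted Schr\"oder path segments, and then apply \( \LL \), which kills every term except the constant one, leaving precisely the weight-generating function over closed paths.

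In practice the cleanest route is an \emph{induction on \( n \)}, the number of \( x \)-advancing steps (equivalently the horizontal extent). I would establish the base case \( n=0 \) directly: \( \Sch_{0,r,s} \) consists of paths that only use \( V \) steps at abscissa \( 0 \), and one checks \( \LL(P_r(x)Q_s(x)) \) matches the corresponding weighted sum, where the orthogonality relation \( \LL(x^j Q_m(x)) = 0 \) for \( j<m \) does the bookkeeping. For the inductive step I would peel off the \emph{last} \( x \)-advancing step of a path in \( \Sch_{n,r,s} \): it is either a \( U \) landing at \( (n,s) \) from \( (n-1,s-1) \) with weight \( 1 \), an \( H \) from \( (n-1,s) \) with weight \( b_s \), or a maximal block of \( V \) steps followed by the advancing step, and matching these cases to the three terms produced when \( x \cdot x^{n-1}P_r(x)Q_s(x) \) is re-expanded via \eqref{eq:three_term} closes the induction. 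The main obstacle I anticipate is handling the \( V \) steps correctly: because \( V \) does not advance \( x \), an arbitrary number of them can stack at a single abscissa, so the generating function over a fixed column is an infinite geometric series in the \( a_i \), and I must verify that the formal power series identities are legitimate and that the height never drops below \( 0 \) (the constraint \( p_i \in \ZZ \times \ZZ_{\ge 0} \) from the definition of lattice paths). Verifying that the weighted column-sums telescope into the factor multiplying \( P_{n-1}(x) \) in \eqref{eq:three_term}, without double counting the overlap between \( V \) strings and the subsequent advancing step, is the delicate point, but once that matching is pinned down the rest is routine bookkeeping of the induction.
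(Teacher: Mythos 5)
Your combinatorial picture is correct: with \( \vc=\vl=\vz \) only \( U \), \( H \), \( V \) steps survive, the last-step decomposition of a Schr\"oder path matches the recurrence \( a_{s+1}xQ_{s+1}(x)=(x-b_s)Q_s(x)-Q_{s-1}(x) \), and the column of \( V \) steps at the final abscissa accounts for the \( a_{s+1} \)-term. Note, however, that peeling off the \emph{last} step changes the endpoint height \( s \), so the relevant identity is the \( Q \)-recurrence \eqref{eq:Q-rr}, not \eqref{eq:three_term} as you write. More seriously, that identity reads \( \mu_{n,r,s}=a_{s+1}\mu_{n,r,s+1}+b_s\mu_{n-1,r,s}+\mu_{n-1,r,s-1} \): the first term has the \emph{same} \( n \), so knowing level \( n-1 \) does not immediately give level \( n \); you need an auxiliary downward induction on \( s \) (anchored at \( \mu_{n,r,s}=0 \) for \( s>n+r \)), or equivalently a resummation over the whole \( V \)-column. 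That is repairable. The genuine gap is your base case: for \( n=0 \) and \( s\le r \) you must show \( \LL(P_r(x)Q_s(x))=a_ra_{r-1}\cdots a_{s+1} \), and this is \emph{not} a consequence of ``the orthogonality relation doing the bookkeeping'' --- it is precisely Kamioka's biorthogonality, and any attempt to derive it from the recurrences (expanding \( P_r \) via \eqref{eq:three_term}, or \( Q_s \) via \eqref{eq:Q-rr}) produces terms \( \LL(xP_{r'}(x)Q_{s'}(x)) \), i.e., cases with \( n=1 \) that your induction has not yet reached. So ``induct on \( n \) with base case \( n=0 \) for all \( r,s \)'' is not a well-founded ordering. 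A secondary gap: you take for granted that orthogonality plus \( \LL(1)=1 \) defines \( \LL \) on all of \( W \); the set \( \{x^nQ_m(x)\} \) is not a basis, and this existence statement is itself a theorem requiring the basis analysis of the paper.

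The paper proves this result as a corollary of its master theorem (\Cref{thm:master}), which resolves both issues in a different order. It \emph{constructs} \( \LL \) by declaring \( \LL(x^nQ_m(x))=\sum_{p\in\gm_{n,m}}\wt(p) \) on a good basis --- here indexed by \( \{(n,m): n=0 \mbox{ or } m=0\} \), see \Cref{tab:1} --- and then propagates the formula to every \( x^nQ_m(x) \) by single applications of \eqref{eq:8} matched against the last-step identity \eqref{eq:10}; this settles the case \( r=0 \) for \emph{all} \( n,s \) at once. Only then is general \( r \) handled, by a separate induction on \( r \) using the \emph{first} step of the path and the \( P \)-recurrence, exactly as in \Cref{thm:mu_nrs}; each inductive step there uses values at larger \( n \) but strictly smaller \( r \), so the ordering is well founded. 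Finally, your worry about infinite geometric series is unfounded in this case: a path in \( \Sch_{n,r,s} \) has at most \( n \) up steps and hence at most \( n+r-s \) vertical-down steps, so \( \Sch_{n,r,s} \) is finite; the infinitude of \( \gm_{n,r,s} \) comes only from the backward-down steps, which are absent here.
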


We note that \Cref{thm:sch} is an equivalent statement of
\cite[Theorem~17]{Kamioka2007}; see \cite[Theorem~4.7]{kimstanton:R1}.

\begin{thm}\label{thm:MS} \cite[Theorem~3.12]{kimstanton:R1}
  Let \( P_n(x) \) be the orthogonal polynomials of type \( R_I \) given
  by \eqref{eq:three_term} with \( \vc = \vz \). Then there is a
  linear functional \( \LL \) on \( W \) such that
  \[
    \LL(x^n P_r(x) Q_s(x)) =
    \sum_{p\in \MS_{n,r,s}}  \wt(p).
  \]
\end{thm}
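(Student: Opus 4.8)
The plan is to show that the generalized moment $\mu_{n,r,s}:=\LL(x^nP_r(x)Q_s(x))$ and the path generating function $M_{n,r,s}:=\sum_{p\in\MS_{n,r,s}}\wt(p)$ satisfy the same recurrence with the same initial data, hence coincide. First I would note that $\mu_{n,r,s}$ is well defined: writing $P_r(x)=\sum_j c_jx^j$ gives $x^nP_r(x)Q_s(x)=\sum_j c_jx^{n+j}Q_s(x)\in W$. Peeling one factor of $x$ from \eqref{eq:three_term} with $\vc=\vz$, namely $xP_r=P_{r+1}+b_rP_r+\lambda_rP_{r-1}+a_rxP_{r-1}$, then multiplying by $Q_s$ and applying $\LL$, yields for $n\ge1$
\[
\mu_{n,r,s}=\mu_{n-1,r+1,s}+b_r\mu_{n-1,r,s}+\lambda_r\mu_{n-1,r-1,s}+a_r\mu_{n,r-1,s},
\]
where the $\lambda_0$- and $a_0$-terms are absent since $P_{-1}=0$. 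Decomposing a Motzkin--Schr\"oder path by its first step ($U$, $H$, $D$, or the vertical-down step $V$, which does not advance the $x$-coordinate) shows that $M_{n,r,s}$ obeys the identical recurrence. Each application lowers $n$, or keeps $n$ fixed while lowering $r$ to $0$ (where the $V$-term disappears), so the recurrence together with the values at $n=0$ determines everything, and it remains to prove $\mu_{0,r,s}=M_{0,r,s}$.

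For the base case $n=0$, the quantity $M_{0,r,s}$ counts Motzkin--Schr\"oder paths from $(0,r)$ to $(0,s)$; as no step other than $V$ keeps the $x$-coordinate at $0$, such a path is a chain of vertical-down steps, whence $M_{0,r,s}=\prod_{i=s+1}^{r}a_i$ for $s\le r$ and $M_{0,r,s}=0$ otherwise. On the functional side $\mu_{0,r,s}=\LL(P_rQ_s)$. The case $r<s$ is immediate: expanding $P_r=\sum_{j\le r}c_jx^j$ and using the partial orthogonality $\LL(x^jQ_s)=0$ for $j<s$ kills every term. The same orthogonality reduces the diagonal case to the normalization $\LL(P_sQ_s)=\LL(x^sQ_s)$, which must equal $1$, consistent with the single path $U^s$.

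The main obstacle is the off-diagonal base case $\LL(P_rQ_s)=\prod_{i=s+1}^{r}a_i$ for $r>s$, equivalently the computation of the lower-triangular ``moments'' $\nu_{n,m}:=\LL(x^nQ_m)$ for $n\ge m$. To obtain these I would use the auxiliary three-term recurrence for the $Q$'s, gotten by dividing \eqref{eq:three_term} by $d_m(x)$, namely $(a_{m+1}x+\lambda_{m+1})Q_{m+1}=(x-b_m)Q_m-Q_{m-1}$. Multiplying by $x^n$ and applying $\LL$ gives a recurrence relating the $\nu_{n,m}$ across $m$, which, pinned down by the orthogonality values $\nu_{n,m}=0$ for $n<m$ and $\nu_{0,m}=\delta_{m,0}$, should determine all $\nu_{n,m}$; I would then check that the resulting coefficients telescope to give $\LL(P_rQ_s)=\prod_{i=s+1}^{r}a_i$.

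An appealing alternative that isolates the hard step differently is to prove the case $r=0$ first, where the $n=0$ data $\LL(Q_s)=\delta_{s,0}$ is immediate from orthogonality, so that the last-step recurrence $\mu_{n,0,s}=a_{s+1}\mu_{n,0,s+1}+\lambda_{s+1}\mu_{n-1,0,s+1}+b_s\mu_{n-1,0,s}+\mu_{n-1,0,s-1}$ (which terminates because $\mu_{n,0,s}=0$ for $s>n$) yields $\LL(x^nQ_s)=M_{n,0,s}$ cleanly. The general case would then follow by combining the dual-coefficient expansion $x^nP_r=\sum_{s'}\tau_{n,r,s'}P_{s'}$, whose coefficients count restricted Motzkin--Schr\"oder paths in $\rgm_{n,r,s'}$, with the factorization of an unrestricted path into a restricted path to $(n,s')$ followed by a chain of trailing vertical-down steps to $(n,s)$, that is, $M_{n,r,s}=\sum_{s'}\tau_{n,r,s'}M_{0,s',s}$; the matching identity on the functional side is $\mu_{n,r,s}=\sum_{s'}\tau_{n,r,s'}\mu_{0,s',s}$, reducing once more to the base values above. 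In every approach the genuine work is the evaluation of $\LL(P_rQ_s)$, and further specializing $\va=\vz$ or $\vl=\vz$ recovers the Motzkin and Schr\"oder cases of \Cref{thm:mot,thm:sch}.
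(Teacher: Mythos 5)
Your outline (show that $\LL(x^nP_rQ_s)$ and the path sum satisfy the same first-step recurrence in $(n,r)$, then reduce to the base case $n=0$) is the right skeleton and matches how the paper handles the $R_{II}$ analogue (\Cref{thm:mu_nrs}) and the master theorem (\Cref{thm:master}). But there is a foundational gap: you never construct $\LL$, and the theorem is an \emph{existence} statement about a functional on the large space $W=\Span\{x^nQ_m:n,m\ge0\}$. You prescribe $\LL$ implicitly by orthogonality plus normalizations such as $\LL(x^sQ_s)=1$, but $\{x^nQ_m\}$ is a highly linearly dependent spanning set, so prescribing values on it requires a consistency check; moreover $\LL(x^sQ_s)=1$ is not among the defining conditions of the Ismail--Masson functional and is asserted rather than derived. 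The paper's route is the opposite: it first exhibits an explicit \emph{basis} of $W$ (the ``good basis'' $\{x^n\}\cup\{Q_m:m\ge1\}$ in the $R_I$ case, cf.\ \Cref{thm:basis_V} and \Cref{tab:1}), \emph{defines} $\LL$ on that basis by the path generating functions, and then derives both the orthogonality and the moment formula by propagating the recurrence. This is precisely the step your proof is missing, and it is not cosmetic: the paper's \Cref{rem:3} points out that even Ismail and Masson leave the underlying basis fact unproved.

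The second gap is in your base case. You propose to determine the lower-triangular values $\nu_{n,m}=\LL(x^nQ_m)$ from the $Q$-recurrence ``pinned down by $\nu_{n,m}=0$ for $n<m$ and $\nu_{0,m}=\delta_{m,0}$.'' As written, solving the relation
\[
a_m\nu_{n,m}+\lambda_m\nu_{n-1,m}=\nu_{n,m-1}-b_{m-1}\nu_{n-1,m-1}-\nu_{n-1,m-2}
\]
for $\nu_{n,m}$ marches rightward within each row and therefore needs the row-starting values $\nu_{n,0}=\LL(x^n)$ as input --- but those are exactly the moments the theorem is computing, so this reading is circular. The determination does work, but only if you solve the relation the other way, for $\nu_{n,m-1}$ in terms of $\nu_{n,m}$ and row $n-1$, and march \emph{leftward} from the vanishing region $\{n<m\}$; this is the last-step decomposition of a path, and it is exactly the argument in the proof of \Cref{lem:mu_nm} (in the $R_I$ case one solves for $\mu_{n-1,m}$ using $a_m\ne0$ rather than for $\mu_{n,m}$ using $c_m\ne0$). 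It also yields $\nu_{m,m}=\nu_{m-1,m-1}=\cdots=\nu_{0,0}=1$, which is the normalization you asserted. Your ``appealing alternative'' via dual coefficients does not avoid this: it reduces to the same base values $\mu_{0,s',s}$, and it imports \Cref{thm:tau_nrs}, a substantially harder result proved by a separate involution. Fix the two points above --- construct $\LL$ on an explicit basis, and run the base-case induction leftward from the zero region --- and your argument becomes essentially the paper's.
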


Finally, we introduce dual coefficients of polynomials.

\begin{defn}
  Let \( (P_n(x))_{n\ge0} \) be a sequence of polynomials such that
  \( \deg P_n(x) = n \) for all \( n\ge0 \). The \emph{coefficients}
  \( \rho_{n,k} \) and \emph{dual coefficients} \( \tau_{n,k} \) of
  the polynomials \( P_n(x) \) are defined by
\begin{align}\label{eq:mixed_moments}
  P_n(x)=\sum_{k\ge0}\rho_{n,k}x^k, \quad  x^n=\sum_{k\ge0}\tau_{n,k}P_k(x).
\end{align}
We also define
the \emph{(generalized) dual coefficients}
\( \tau_{n,r,s} \) by
\begin{equation}\label{eq:6}
  x^nP_r(x)=\sum_{s\ge0}\tau_{n,r,s}P_s(x).
\end{equation}
\end{defn}

Suppose that \( P_n(x) \) be the (classical) orthogonal polynomials
given by \eqref{eq:three_term} with \( \vc = \va = \vz \) and let
\( \LL \) be the corresponding linear functional. By the orthogonality
\( \LL(P_r(x)Q_s(x)) =\delta_{r,s} \), multiplying \( Q_s(x) \) both
sides of \eqref{eq:6} and taking \( \LL \) yields
\begin{equation}\label{eq:7}
  \LL(x^nP_r(x)Q_s(x))= \tau_{n,r,s}.
\end{equation}
Thus, \Cref{thm:mot} gives the following corollary.

\begin{cor}\label{cor:mot2}
  Let \( P_n(x) \) be the (classical) orthogonal polynomials
  given by \eqref{eq:three_term}
  with \( \vc = \va = \vz \). Then 
  \[
    \tau_{n,r,s} = \sum_{p\in \Mot_{n,r,s}}  \wt(p).
  \]
\end{cor}

We note that \eqref{eq:7} no longer holds for Laurent biorthogonal
polynomials or orthogonal polynomials of type \( R_I \) or
\( R_{II} \). In \Cref{subsec:Comb}, we will find an analogous result of
\Cref{cor:mot2} for orthogonal polynomials of type \( R_{II} \) using
restricted \( R_{II} \) paths.

\section{A combinatorial interpretation for generalized moments}
\label{sec:comb-interpr-gener}

\subsection{Vector space and linear functional}
\label{subsec:extended_V}

Throughout this section we follow the notation in \Cref{def:R2}.
Recall the vector space
\[
  W =\s \{x^nQ_m(x):n,m\ge0\}.
\]
In this subsection, we provide a basis for \( W \) as follows.

\begin{thm}\label{thm:basis_V}
  The vector space \( W = \s \{x^nQ_m(x):n,m\ge0\} \) has a basis
  \[
    {\mathcal B} = \{x^n:n\ge0\}\cup\{Q_n(x):n\ge1\}\cup\{xQ_n(x):n\ge1\}.
  \]
\end{thm}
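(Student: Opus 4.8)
The plan is to verify the two halves of the basis claim — that \(\mathcal B\) spans \(W\), and that \(\mathcal B\) is linearly independent — and the engine for both is a recurrence for the normalized polynomials \(Q_n(x)\). Dividing the three-term recurrence \eqref{eq:three_term} by \(d_m(x)\) and using \(P_{m-1}(x)/d_m(x) = Q_{m-1}(x)/(c_m x^2 + a_m x + \lambda_m)\) together with \(P_{m+1}(x)/d_m(x) = (c_{m+1}x^2 + a_{m+1}x + \lambda_{m+1})Q_{m+1}(x)\), I obtain, after re-indexing, the relation
\[
 (c_n x^2 + a_n x + \lambda_n)Q_n(x) = (x - b_{n-1})Q_{n-1}(x) - Q_{n-2}(x), \qquad n \ge 1,
\]
with the conventions \(Q_{-1}(x)=0\) and \(Q_0(x)=1\). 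Since \(c_n \neq 0\), I may solve for \(x^2 Q_n(x)\), which exhibits it as a linear combination of \(Q_n, xQ_n, Q_{n-1}, xQ_{n-1}\), and \(Q_{n-2}\). This single reduction is what lowers the power of \(x\) at the cost of decreasing the index, and it drives the spanning argument.

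For spanning I would show \(x^k Q_m \in \s\,\mathcal B\) for all \(k,m\ge 0\) by strong induction on the quantity \(\phi(k,m)=k+2m\). If \(m=0\), then \(x^kQ_m = x^k \in \mathcal B\); if \(m\ge1\) and \(k\in\{0,1\}\), then \(x^kQ_m\) is \(Q_m\) or \(xQ_m\), again in \(\mathcal B\). In the remaining case \(k\ge 2\), \(m\ge 1\), I multiply the reduction for \(x^2Q_m\) by \(x^{k-2}\) to write \(x^kQ_m\) as a combination of the terms \(x^{k-1}Q_{m-1}, x^{k-2}Q_{m-1}, x^{k-2}Q_{m-2}, x^{k-1}Q_m, x^{k-2}Q_m\), each of which has strictly smaller value of \(\phi\); the induction hypothesis then places each in \(\s\,\mathcal B\). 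Together with the evident inclusion \(\mathcal B \subseteq W\), this yields \(\s\,\mathcal B = W\).

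For linear independence, suppose a finite relation \(\sum_n u_n x^n + \sum_{n=1}^N (v_n + w_n x)Q_n(x)=0\) holds, where \(N\) is the largest index occurring among the \(Q\)-terms. I would clear denominators by multiplying through by \(d_N(x)\); since \(d_N(x)/d_n(x)=\prod_{i=n+1}^N(c_i x^2 + a_i x + \lambda_i)\) is a polynomial, the relation becomes a polynomial identity \(F(x)=0\) in which each \(Q_n\)-term contributes \((v_n + w_n x)P_n(x)\prod_{i=n+1}^N(c_i x^2 + a_i x + \lambda_i)\). Reducing \(F(x)\) modulo the quadratic \(c_N x^2 + a_N x + \lambda_N\) annihilates every summand with \(n<N\) (its product contains this factor) and every \(x^n d_N(x)\) term (as \(d_N\) contains it), leaving \((v_N + w_N x)P_N(x)\equiv 0\). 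Because \(P_N(\alpha_N)\ne 0\) and \(P_N(\beta_N)\ne 0\), the quadratic \(c_N x^2 + a_N x + \lambda_N\) (which has degree \(2\) since \(c_N\ne0\)) is coprime to \(P_N(x)\), so it must divide \(v_N + w_N x\); as the latter has degree at most \(1\), this forces \(v_N=w_N=0\). Removing these terms and repeating gives a downward induction that kills all \(v_n,w_n\); what remains is \(\sum_n u_n x^n=0\), whence all \(u_n=0\) by the independence of the monomials.

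I expect the independence argument to be the main obstacle, and specifically the step that isolates the top coefficients \(v_N\) and \(w_N\). The clean way to do this is the reduction modulo \(c_N x^2 + a_N x + \lambda_N\) above, which automatically handles the case of a repeated root \(\alpha_N=\beta_N\) without any separate derivative computation; it is precisely here that the defining hypotheses \(c_N\ne0\), \(P_N(\alpha_N)\ne0\), and \(P_N(\beta_N)\ne0\) enter, through the coprimality of the quadratic with \(P_N(x)\). By comparison, the spanning half is essentially mechanical once the \(Q\)-recurrence and the monovariant \(\phi(k,m)=k+2m\) are identified.
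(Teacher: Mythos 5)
Your proof is correct, but it takes a genuinely different route from the paper's. The paper proceeds through the auxiliary space \( W'=\s\{x^n/d_m(x):n,m\ge0\} \): it first shows that \( \{x^n:n\ge0\}\cup\{1/d_m(x):m\ge1\}\cup\{x/d_m(x):m\ge1\} \) is a basis of \( W' \) (independence from the pairwise distinct asymptotic orders \( x^{-2m} \) and \( x^{-2m+1} \) at infinity, spanning from polynomial division), then proves \( W=W' \) by exhibiting \( 1/d_m(x) \) and \( x/d_m(x) \) inside \( W \) via a partial-fraction argument that splits into the cases \( \alpha_m\ne\beta_m \) and \( \alpha_m=\beta_m \), and finally checks that \( \s(\mathcal B) \) contains this auxiliary basis. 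You instead work with \( \mathcal B \) directly: spanning comes from the recurrence \eqref{eq:Q-rr} by strong induction on the monovariant \( k+2m \) (this is exactly where \( c_m\ne0 \) is used, to solve for \( x^2Q_m(x) \)), and independence comes from clearing denominators and reducing modulo \( c_Nx^2+a_Nx+\lambda_N \), where the hypotheses \( P_N(\alpha_N)\ne0 \) and \( P_N(\beta_N)\ne0 \) yield coprimality of that quadratic with \( P_N(x) \) and hence force \( v_N=w_N=0 \). Your version is shorter, handles a repeated root \( \alpha_N=\beta_N \) with no case split, and gives an explicit independence proof for \( \mathcal B \) itself, whereas the paper's independence of \( \mathcal B \) is only inherited implicitly from the auxiliary basis; what your route does not deliver is the identification \( W=W' \) and the basis \( \{x^n\}\cup\{1/d_m(x)\}\cup\{x/d_m(x)\} \), which the paper reuses in \Cref{rem:3}.
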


To prove this theorem, we consider
\begin{align*}
 W'=&\s \{x^n/d_m(x):n,m\ge0\}.
\end{align*}
We will show that \( W=W' \). Since \( W \) is clearly contained in
\( W' \), it suffices to show that a basis of \( W' \) is contained
in \( W \).

\begin{lem}\label{lem:V' basis}
 The vector space \( W' = \s\{x^n/d_m(x):n,m\ge0\} \) has a basis
 \[
   {\mathcal B}'=\{x^n:n\ge0\}\cup\{1/d_m(x):m\ge1\}\cup\{x/d_m(x):m\ge1\}.
 \]
\end{lem}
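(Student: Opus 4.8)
The plan is to establish the two defining properties of a basis separately: that $\mathcal{B}'$ spans $W'$, and that $\mathcal{B}'$ is linearly independent. Write $g_i(x) = c_ix^2+a_ix+\lambda_i$, so that $d_m(x) = \prod_{i=1}^m g_i(x)$ and $d_m(x) = g_m(x)\,d_{m-1}(x)$ for $m\ge1$, with $\deg d_m = 2m$. The engine for both parts is the single identity obtained by dividing $g_m(x) = d_m(x)/d_{m-1}(x)$ through by $d_m(x)$, namely
\begin{equation*}
  \frac{1}{d_{m-1}(x)} = c_m\frac{x^2}{d_m(x)} + a_m\frac{x}{d_m(x)} + \lambda_m\frac{1}{d_m(x)}.
\end{equation*}
Since $c_m\ne0$ (part of the definition of type $R_{II}$), multiplying by $x^{n-2}/c_m$ and rearranging gives, for $n\ge2$,
\begin{equation*}
  \frac{x^n}{d_m(x)} = \frac{1}{c_m}\left( \frac{x^{n-2}}{d_{m-1}(x)} - a_m\frac{x^{n-1}}{d_m(x)} - \lambda_m\frac{x^{n-2}}{d_m(x)} \right).
\end{equation*}

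For spanning, it suffices to show $x^n/d_m(x)\in\s(\mathcal{B}')$ for all $n,m\ge0$, since these generators span $W'$ and $\mathcal{B}'\subseteq W'$. I would argue by induction on $m$, and within each $m$ by induction on $n$. The base case $m=0$ is immediate since $x^n/d_0(x)=x^n\in\mathcal{B}'$; for $m\ge1$ the cases $n=0,1$ give $1/d_m(x)$ and $x/d_m(x)$, which lie in $\mathcal{B}'$. For $n\ge2$ the displayed relation expresses $x^n/d_m(x)$ in terms of $x^{n-2}/d_{m-1}(x)$ (handled by the induction on $m$) and of $x^{n-1}/d_m(x)$, $x^{n-2}/d_m(x)$ (handled by the induction on $n$), closing the double induction.

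For linear independence I would exploit the decomposition of a rational function into a polynomial and a proper part. Suppose a finite relation $\sum_{n}\alpha_n x^n + \sum_{m=1}^M (\beta_m+\gamma_m x)/d_m(x) = 0$ holds. Each $1/d_m(x)$ and $x/d_m(x)$ is proper (numerator degree $\le1<2m=\deg d_m$), so the second sum, placed over the common denominator $d_M(x)$, has numerator degree at most $2M-1<\deg d_M$ and is therefore proper. By uniqueness of the polynomial-plus-proper decomposition, the polynomial $\sum_n\alpha_n x^n$ and the proper sum must each vanish, whence all $\alpha_n=0$. It then remains to show that $\sum_{m=1}^M (\beta_m+\gamma_m x)/d_m(x)=0$ forces all $\beta_m=\gamma_m=0$. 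Clearing denominators yields the polynomial identity $\sum_{m=1}^M (\beta_m+\gamma_m x)\,d_M(x)/d_m(x)=0$, where $d_M(x)/d_m(x)=\prod_{i=m+1}^M g_i(x)$ is divisible by $g_M(x)$ for every $m<M$. Reducing modulo $g_M(x)$ kills all terms except $m=M$, leaving $\beta_M+\gamma_M x\equiv0\pmod{g_M(x)}$; as $\deg g_M=2>1\ge\deg(\beta_M+\gamma_M x)$ (again using $c_M\ne0$), this forces $\beta_M=\gamma_M=0$, and a downward induction on $M$ finishes the argument.

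The genuinely delicate step is linear independence: one must confirm that the span of $\{1/d_m(x),x/d_m(x)\}$ consists of proper rational functions, so that it separates cleanly from the polynomial span, and that the top quadratic $g_M$ truly has degree two, so that a residue of degree $\le1$ modulo $g_M$ is forced to vanish. Both points rest essentially on the hypothesis $c_m\ne0$; without it, the spanning relation and the reduction argument would each break down.
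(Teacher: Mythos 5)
Your proof is correct, and it splits naturally into a half that mirrors the paper and a half that does not. For spanning, the paper first writes \( x^n/d_m(x)=q(x)+r(x)/d_m(x) \) with \( \deg r<2m \) and then repeatedly divides the remainder by \( c_mx^2+a_mx+\lambda_m \) as in \eqref{eq:iteration V'} to peel off one denominator factor at a time; your single identity \( x^n/d_m=c_m^{-1}\bigl(x^{n-2}/d_{m-1}-a_mx^{n-1}/d_m-\lambda_mx^{n-2}/d_m\bigr) \) together with a double induction on \( (m,n) \) is the same reduction packaged differently, and both hinge on \( c_m\ne0 \) forcing \( \deg(c_mx^2+a_mx+\lambda_m)=2 \). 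For linear independence you genuinely diverge: the paper disposes of it in one sentence by noting that \( 1/d_m(x)\sim x^{-2m} \) and \( x/d_m(x)\sim x^{-2m+1} \), so the elements of \( {\mathcal B}' \) have pairwise distinct asymptotic orders at infinity and no nontrivial finite linear combination can vanish. You instead argue algebraically: uniqueness of the polynomial-plus-proper decomposition separates the \( \alpha_n \) from the rest, and reducing the cleared-denominator identity modulo \( g_M \) (a genuine degree-two polynomial, again by \( c_M\ne0 \)) kills \( \beta_M \) and \( \gamma_M \), with a downward induction finishing. Your route is longer but purely algebraic and would work verbatim over any field without appealing to growth rates; the paper's is a quick analytic shortcut. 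Both are sound.
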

\begin{proof}
  Since \( 1/d_m(x) \sim x^{-2m} \) and \( x/d_m(x) \sim x^{-2m+1} \)
  asymptotically up to constants, \( {\mathcal B}' \) is linearly
  independent. Since \( {\mathcal B}'\subset W' \), it suffices to
  show that \( \s(B')=W' \). To see this, we claim that
  \( x^n/d_m(x)\in\s(B') \) for any \( n,m\ge0 \). First, we write the
  fraction as
\begin{align}\label{eq:fraction V'}
    \frac{x^n}{d_m(x)}=q(x)+\frac{r(x)}{d_m(x)}, 
\end{align}
 where \( q(x) \) and \( r(x) \) are polynomials and \( \deg(r(x))<2m \). Now it suffices to show that \( r(x)/d_m(x)\in\s({\mathcal B}') \). If \( \deg r(x) \le 1 \), then we have \( r(x)/d_m(x)\in\s({\mathcal B}') \). Otherwise, we can write
 \begin{align}\label{eq:iteration V'}   \frac{r(x)}{d_m(x)}=\frac{(c_mx^2+a_mx+\lambda_m)r_1(x)+c_1(x)}{d_m(x)}=\frac{r_1(x)}{d_{m-1}(x)}+\frac{c_1(x)}{d_m(x)},
 \end{align}
 where \( r_1(x) \) is a polynomial with \( \deg r_1(x)<2m-2 \) and \( c_1(x) \) is a polynomial with \( \deg c_1(x) \le 1 \). By iterating \eqref{eq:iteration V'}, we can express \( r(x)/d_m(x) \) as a linear combination of the elements in \( {\mathcal B}' \), which completes the proof.
\end{proof}

\begin{lem}
 Let \( W = \s\{x^nQ_m(x):n,m\ge0\} \) and \( W' = \s\{x^n/d_m(x):n,m\ge0\} \). Then we have \( W=W' \).
\end{lem}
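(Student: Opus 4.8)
The plan is to prove the nontrivial inclusion $W'\subseteq W$; combined with the obvious inclusion $W\subseteq W'$ (each generator $x^nQ_m(x)=x^nP_m(x)/d_m(x)$ has a polynomial numerator and so lies in $W'$), this gives $W=W'$. By \Cref{lem:V' basis} it suffices to show that the basis $\mathcal B'$ lies in $W$, i.e.\ that $x^n\in W$ for $n\ge0$ and $1/d_m(x),x/d_m(x)\in W$ for $m\ge1$. The first is immediate: since $Q_0(x)=1$, we have $x^n=x^nQ_0(x)\in W$.

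For the remaining generators I would prove the stronger statement $S(m)$: \emph{$x^n/d_m(x)\in W$ for all $n\ge0$}, by induction on $m$, where $S(0)$ is exactly the observation above. Assume $S(k)$ for all $k<m$. The reduction \eqref{eq:iteration V'} expresses $x^n/d_m(x)$, for $n\ge2$, as a linear combination of $x^{n-2}/d_{m-1}(x)$, $x^{n-1}/d_m(x)$, and $x^{n-2}/d_m(x)$; iterating it on the two terms with denominator $d_m(x)$ (whose numerator degrees strictly decrease) shows that $x^n/d_m(x)$ lies in the span of $1/d_m(x)$, $x/d_m(x)$, and terms $x^j/d_{m-1}(x)$, the last of which belong to $W$ by $S(m-1)$. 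Hence $S(m)$ reduces to showing $1/d_m(x),x/d_m(x)\in W$.

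To produce these two elements I would combine $Q_m(x),xQ_m(x)\in W$ with polynomial division by $g_m(x):=c_mx^2+a_mx+\lambda_m$. Writing $P_m(x)=g_m(x)s(x)+t(x)$ and $xP_m(x)=g_m(x)\tilde s(x)+\tilde t(x)$ with $\deg t,\deg\tilde t\le1$, and using $d_m(x)=g_m(x)d_{m-1}(x)$, yields
\[
  Q_m(x)=\frac{s(x)}{d_{m-1}(x)}+\frac{t(x)}{d_m(x)},\qquad
  xQ_m(x)=\frac{\tilde s(x)}{d_{m-1}(x)}+\frac{\tilde t(x)}{d_m(x)}.
\]
Since $s(x)/d_{m-1}(x),\tilde s(x)/d_{m-1}(x)\in W$ by $S(m-1)$, and writing $t(x)=t_1x+t_0$, $\tilde t(x)=\tilde t_1x+\tilde t_0$, this is a linear system in the unknowns $x/d_m(x)$ and $1/d_m(x)$ whose right-hand sides lie in $W$ and whose coefficient matrix is $\left(\begin{smallmatrix}t_1&t_0\\\tilde t_1&\tilde t_0\end{smallmatrix}\right)$. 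Solving it will place $1/d_m(x)$ and $x/d_m(x)$ in $W$, provided the matrix is invertible.

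The main obstacle — and the only point where the hypotheses of \Cref{def:R2} are used — is the nonsingularity of this matrix. Here I would exploit that $\tilde t\equiv xP_m\equiv x\,t\pmod{g_m}$, so $\tilde t$ is the image of $t$ under multiplication by $x$ in $\CC[x]/(g_m)$; using $\alpha_m+\beta_m=-a_m/c_m$ and $\alpha_m\beta_m=\lambda_m/c_m$ (Vieta), a short computation gives
\[
  t_1\tilde t_0-t_0\tilde t_1
  =-\bigl(t_1\alpha_m+t_0\bigr)\bigl(t_1\beta_m+t_0\bigr)
  =-t(\alpha_m)t(\beta_m)
  =-P_m(\alpha_m)P_m(\beta_m),
\]
the last equality following from $t\equiv P_m\pmod{g_m}$ and $g_m(\alpha_m)=g_m(\beta_m)=0$. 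By the assumption $P_m(\alpha_m)\ne0$ and $P_m(\beta_m)\ne0$ in \Cref{def:R2}, the determinant is nonzero, so the system is solvable; this establishes $S(m)$, completes the induction, and hence proves $W'\subseteq W$ and $W=W'$.
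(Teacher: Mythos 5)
Your proof is correct, and while it shares the paper's overall skeleton---reduce via \Cref{lem:V' basis} to showing $1/d_m(x),x/d_m(x)\in W$ and induct on $m$, using \eqref{eq:iteration V'} to absorb higher powers of $x$---the mechanism of the inductive step is genuinely different. The paper factors $c_mx^2+a_mx+\lambda_m=c_m(x-\alpha_m)(x-\beta_m)$, first places $1/(d_{m-1}(x)(x-\beta_m))$ (and its $\alpha_m$-analogue) in $W$ using $P_m(\beta_m)\ne0$, recovers $1/d_m(x)$ by partial fractions with a separate computation for the repeated-root case $\alpha_m=\beta_m$, and only then treats $x/d_m(x)$ by a second division argument. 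You instead divide both $P_m(x)$ and $xP_m(x)$ by $c_mx^2+a_mx+\lambda_m$ and solve a single $2\times2$ linear system for $1/d_m(x)$ and $x/d_m(x)$ simultaneously; your Vieta computation of the determinant as $-t(\alpha_m)t(\beta_m)=-P_m(\alpha_m)P_m(\beta_m)$ is correct, so the hypotheses of \Cref{def:R2} enter in exactly one transparent place. This buys you two things: the case split on $\alpha_m=\beta_m$ disappears entirely, and the final step is more robust---the paper's passage from \eqref{eq:basis_ind2} to $x/d_m(x)\in W$ tacitly uses that the linear coefficient of $c_1(x)$ (your $t_1$) is nonzero, which can fail (e.g.\ when $P_m(\alpha_m)=P_m(\beta_m)$ with $\alpha_m\ne\beta_m$), whereas your determinant condition covers that degenerate situation automatically. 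The paper's route, in exchange, is slightly more elementary in that it never needs to invert a matrix, only to divide by the single nonzero scalars $P_m(\beta_m)$ and $P_m(\alpha_m)$.
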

\begin{proof}
  Since \( W\subseteq W' \) and \( x^n\in W \) for all \( n\ge0 \), by
  \Cref{lem:V' basis}, it suffices to show that
  \( 1/d_m(x),x/d_m(x)\in W \) for \( m\ge0 \). We prove this by
  induction on \( m \). First, the base case \( m=0 \) is clear.

  Now, let \( m\ge1 \). We claim that \( 1/d_m(x)\in W \). If
  \( r_m(x) \) is the quotient of \( P_m(x) \) when divided by
  \( x-\beta_m \), then we have
 \[
  \frac{P_{m}(x)}{d_{m-1}(x)(x-\beta_m)} =\frac{r_m(x)}{d_{m-1}(x)}+\frac{P_m(\beta_m)}{d_{m-1}(x)(x-\beta_m)}.
\]
 Using \eqref{eq:iteration V'} repeatedly, we can write \( r_m(x)/d_{m-1}(x) \) as a linear combination of \( 1/d_i(x) \) and \( x/d_i(x) \) for \( i<m \), that is,  \( r_m(x)/d_{m-1}(x)\in W \) by the inductive hypothesis. Since \( P_m(\beta_m)\neq0 \), we have
 \begin{align}
\label{eq:basis_ind1}   \frac{1}{d_{m-1}(x)(x-\beta_m)}=\frac{1}{P_m(\beta_m)}\left( c_mxQ_m(x)-c_m\alpha_mQ_m(x)-\frac{r_m(x)}{d_{m-1}(x)} \right)\in W.
 \end{align}
 We also have \( 1/(d_{m-1}(x)(x-\alpha_m))\in W \) in the same way.

 If \( \alpha_m\neq\beta_m, \) then
 \[
   \frac{1}{d_m(x)}=\frac{1}{c_m(\beta_m-\alpha_m)}\left( \frac{1}{d_{m-1}(x)(x-\beta_m)} - \frac{1}{d_{m-1}(x)(x-\alpha_m)} \right)\in W.
 \]
 If \( \alpha_m=\beta_m, \) then we can write \( Q_m(x) \) as
 \begin{align*}
  Q_m(x) = \frac{P_m(x)}{d_m(x)}&=\frac{c_m(x-\alpha_m)^2R_{m-2}(x)+\gamma c_m(x-\alpha_m)+P_m(\alpha_m)}{d_m(x)}\\
                  &=\frac{R_{m-2}(x)}{d_{m-1}(x)}+\frac{\gamma}{d_{m-1}(x)(x-\alpha_m)}+\frac{P_m(\alpha_m)}{d_m(x)}
 \end{align*}
for some constant \( \gamma \).  Again using \eqref{eq:iteration V'} repeatedly, we have \( \frac{R_{m-2}(x)}{d_{m-1}(x)}\in W \) by the inductive hypothesis and hence
 \[
   \frac{1}{d_m(x)}=\frac{1}{P_m(\alpha_m)}\left( Q_m(x)-\frac{R_{m-2}(x)}{d_{m-1}(x)}-\frac{\gamma}{d_{m-1}(x)(x-\alpha_m)} \right) \in W,
 \]
 as claimed.

 Now it remains to prove \( x/d_m(x)\in W \). We write \( Q_m(x) \)
 as
\begin{align}
  \label{eq:basis_ind2}
  Q_m(x)=\frac{(c_mx^2+a_mx+\lambda_m)U_{m}(x)}{d_m(x)}+\frac{c_1(x)}{d_m(x)}= \frac{U_{m}(x)}{d_{m-1}(x)}+\frac{c_1(x)}{d_m(x)},
\end{align}
where \( c_1(x) \) is a polynomial of degree at most \( 1 \). Note
that, if \( m=1 \), we have \( U_0(x)=0 \). Similarly to how we showed
\( r_m(x)/d_{m-1}(x) \in W \), we use the same induction argument to
obtain \(U_{m}(x)/d_{m-1}(x)\in W \). Together with
\( 1/d_m(x)\in W \), we obtain \( x/d_m(x)\in W, \) as desired.
\end{proof}

Now we prove~\Cref{thm:basis_V}. 

\begin{proof}[Proof of~\Cref{thm:basis_V}]
  Since \( {\mathcal B} \subseteq W \), it suffices to show that \( {\mathcal B} \) contains \( 1/d_m(x) \) and \( x/d_m(x) \) for any \( m\ge1 \).
  It can be shown by induction. We suppose \( 1/d_{k}(x),x/d_k(x)\in {\mathcal B} \) for \( k\le m-1 \). By~\eqref{eq:iteration V'} and~\eqref{eq:basis_ind1}, we inductively obtain \( 1/d_m(x)\in {\mathcal B} \).
  Moreover, using~\eqref{eq:basis_ind2}, we obtain \( x/d_m(x)\in {\mathcal B} \), as desired.
\end{proof}

\begin{remark}\label{rem:3}
  In \cite[Theorem 3.5]{IsmailMasson}, Ismail and Masson implicitly
  use the fact (without providing a proof) that
  \( \{Q_n(x):n\ge1\}\cup\{xQ_n(x):n\ge1\} \) is a basis for
  \( \s\{ x^nQ_m(x):0\le n\le m < \infty \} \). This fact can be
  established by following the same argument as in the proofs of the
  preceding two lemmas and~\Cref{thm:basis_V}.
\end{remark}
  
\subsection{Moments and generalized moments for orthogonal polynomials of type \( R_{II} \)}
\label{subsec:mu}

In this subsection, we construct a linear functional \( \cL \) on
\( W =\s \{x^nQ_m(x):n,m\ge0\} \)
that gives the partial orthogonality for orthogonal
polynomials of type \( R_{II} \). Moreover, we provide a combinatorial
description for the generalized moments \( \LL(x^n P_r(x) Q_s(x)) \).

By~\Cref{thm:basis_V}, we can define a linear functional on \( W \) as
follows.

\begin{defn}\label{def:ext_linear_fct}
  The linear functional \( \cL \) on the vector space
  \( \s\{x^nQ_m(x):n,m\ge0\} \) is defined by
\begin{align*}
  \cL(x^n)&=\sum_{P\in\gm_n}\wt(P), \quad n\ge0,\\
  \cL(Q_n(x)) &= \cL(xQ_{n+1}(x))=0, \quad n\ge1,\\
  \cL(xQ_1(x))&=\sum_{P\in\gm_{1,1}}\wt(P).
\end{align*}
\end{defn}

\begin{defn}\label{def:gen_moments}
  The \emph{generalized moment} \( \mu_{n,r,s} \) of orthogonal
  polynomials of type \( R_{II} \) is given by
\[
  \mu_{n,r,s}=\cL(x^nP_r(x)Q_s(x)).
\]
We also write \( \mu_{n,m} = \mu_{n,0,m} \) and
\( \mu_{n} = \mu_{n,0,0} \).
\end{defn}

By multiplying \( 1/d_n(x) \) on both sides of \eqref{eq:three_term},
we have
  \begin{equation}\label{eq:Q-rr}
  (c_{n+1}x^2+a_{n+1}x+\lambda_{n+1})Q_{n+1}(x)=(x-b_{n})Q_n(x)-Q_{n-1}(x).
\end{equation}
The next proposition provides a combinatorial interpretation for the values of \( \cL \) on the set \( \{x^nQ_m(x):n,m\ge0\} \).

\begin{prop}\label{lem:mu_nm}
  Suppose \( c_n\ne 0 \) for all \( n\ge0 \). Then we have
  \[
    \mu_{n,m}=\sum_{p\in\gm_{n,m}}\wt(p).
  \]
\end{prop}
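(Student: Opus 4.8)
The plan is to prove the identity by showing that the two quantities
\[
  S_{n,m}:=\sum_{p\in\gm_{n,m}}\wt(p)\qand \mu_{n,m}=\cL(x^nQ_m(x))
\]
satisfy the same recurrence with the same initial data. Before anything else I would check that $S_{n,m}$ is a well-defined formal power series. Writing $n_U,n_H,n_D,n_V,n_B$ for the numbers of steps $U,H,D,V,B$ in a path from $(0,0)$ to $(n,m)$, every step other than $U$ contributes a variable of degree one, while the net-displacement equations $n_U+n_H+n_D-n_B=n$ and $n_U-n_D-n_V-n_B=m$ give $n_U=m+n_D+n_V+n_B$. Hence, once the number of non-$U$ steps (i.e.\ the total degree) is bounded, the number of $U$-steps and thus the length of the path are bounded too, so only finitely many paths contribute in each total degree.

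The combinatorial recurrence then comes from classifying a path in $\gm_{n,m}$ by its last step, whose starting height is read off from the endpoint $(n,m)$:
\[
  S_{n,m}=S_{n-1,m-1}+b_mS_{n-1,m}+\lambda_{m+1}S_{n-1,m+1}+a_{m+1}S_{n,m+1}+c_{m+1}S_{n+1,m+1},
\]
with the conventions that $S_{n,m}=0$ for $m<0$ and for unreachable endpoints. The same displacement count yields the boundary values $S_{0,m}=0$ for $m\ge1$ and $S_{1,m}=0$ for $m\ge2$, the values $S_{n,0}$ being those already assigned to $\cL(x^n)$.

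On the functional side I would multiply \eqref{eq:Q-rr} by $x^{n-1}$ and rearrange to obtain
\[
  x^nQ_k(x)=c_{k+1}x^{n+1}Q_{k+1}(x)+a_{k+1}x^nQ_{k+1}(x)+\lambda_{k+1}x^{n-1}Q_{k+1}(x)+b_kx^{n-1}Q_k(x)+x^{n-1}Q_{k-1}(x),
\]
valid for $n\ge1$ and $k\ge0$ with the convention $Q_{-1}(x)=0$. Applying the linear functional $\cL$ and using linearity shows that $\mu_{n,k}$ obeys exactly the recurrence displayed above for $S$. Comparing with \Cref{def:ext_linear_fct}, the defining values of $\cL$ translate precisely into $\mu_{n,0}=S_{n,0}$ for all $n$, together with $\mu_{0,m}=S_{0,m}$ and $\mu_{1,m}=S_{1,m}$ for all $m$; thus $\mu$ and $S$ agree on the first two rows and on the column $m=0$.

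The crux is a structural observation about this five-term recurrence: among its summands, only $c_{m+1}X_{n+1,m+1}$ has first index $n+1$, strictly larger than that of every other term. Since $c_{m+1}\ne0$ by hypothesis, I can solve for this term and convert the relation into a genuine forward recursion that expresses row $n+1$ in terms of rows $n$ and $n-1$ (the entry in column $0$ being supplied directly by $\cL(x^{n+1})$). Consequently the whole array is determined by its first two rows and its zeroth column, and since $\mu$ and $S$ coincide on these and satisfy the same recursion, induction on $n$ forces $\mu_{n,m}=S_{n,m}$. The main obstacle is exactly this inversion step: it is what makes the hypothesis $c_n\ne0$ indispensable, and the division by the (formal) variable $c_{m+1}$ must be read in the localization in which the $c_i$ are invertible, which is the natural home of $\mu_{n,m}$ since it is a priori a rational expression in the $c_i$. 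The remaining care is the boundary bookkeeping, which the displacement count handles.
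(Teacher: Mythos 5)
Your proof is correct and takes essentially the same route as the paper's: both quantities are shown to satisfy the five-term recurrence obtained from \eqref{eq:Q-rr} on the functional side and from the last-step decomposition on the combinatorial side, and the hypothesis \( c_n\ne 0 \) is used to solve for the unique term of largest first index before inducting from the base data of \Cref{def:ext_linear_fct}. The only differences are presentational (row-by-row induction rather than induction on pairs \( (n,m) \), plus your added check that the path sum is a well-defined formal power series).
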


\begin{proof}
  Let \( \mu'_{n,m} \) be the right-hand side of the equation. We will
  prove \( \mu_{n,m} = \mu'_{n,m} \) using induction on \( (n,m) \).
  By \Cref{def:ext_linear_fct}, the statement holds for \( m=0 \),
  \( n=0 \), or \( n=1 \).

  Now let \( n\ge2 \) and \( m\ge1 \), and suppose that the statement
  holds for all \( (n',m') \ne (n,m) \) with \( n'\le n \) and
  \( m'\le m \). By \eqref{eq:Q-rr},
  \[
      (c_{m}x^2+a_{m}x+\lambda_{m})Q_{m}(x)=(x-b_{m-1})Q_{m-1}(x)-Q_{m-2}(x).
  \]
  Multiply \( x^{n-2} \) on both sides and taking \( \LL \) yields
\begin{equation}\label{eq:mu_nm_rec}
 c_{m}\mu_{n,m}+a_{m}\mu_{n-1,m}+\lambda_{m}\mu_{n-2,m}=\mu_{n-1,m-1}-b_{m-1}\mu_{n-2,m-1}-\mu_{n-2,m-2}.
\end{equation}

On the other hand, considering the last step of \( p\in\gm_{n-1,m-1} \),
we have
\[
  \mu'_{n-1,m-1} =
  c_{m}\mu'_{n,m}+a_{m}\mu'_{n-1,m}+\lambda_{m}\mu'_{n-2,m}
  +b_{m-1}\mu'_{n-2,m-1}+\mu'_{n-2,m-2},
\]
which is the same recurrence relation as \eqref{eq:mu_nm_rec}. Thus,
applying the inductive hypothesis to every term in
\eqref{eq:mu_nm_rec} except \( c_m\mu_{n,m} \) gives
\( c_{m}\mu_{n,m} = c_{m} \mu'_{n,m} \). Since \( c_m\ne 0 \), the
statement also holds for \( (n,m) \), which completes the proof.
\end{proof}

\begin{remark}\label{rem:1}
  Using the recurrence~\eqref{eq:mu_nm_rec} along
  with~\Cref{def:ext_linear_fct}, one can see that \( \mu_{n,m}=0 \)
  for \( 0\le n < m \). This shows the orthogonality of orthogonal
  polynomials of type \( R_{II} \).
\end{remark}

We are now ready to obtain a combinatorial description of
\( \mu_{n,r,s} \).

\begin{thm}\label{thm:mu_nrs}
  For nonnegative integers \( n,r \), and \( s \), we have
  \[
    \mu_{n,r,s}=\sum_{P\in\gm_{n,r,s}} \wt(P).
  \]
\end{thm}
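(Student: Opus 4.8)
The plan is to prove the identity by induction on $r$, showing that both sides satisfy the same recurrence in $r$ and agree at $r=0$. Write $M_{n,r,s}=\sum_{P\in\gm_{n,r,s}}\wt(P)$ for the right-hand side. The base case $M_{n,0,s}=\mu_{n,0,s}$ is exactly \Cref{lem:mu_nm}, since $\gm_{n,0,s}=\gm_{n,s}$ and $P_0(x)=1$ give $\mu_{n,0,s}=\cL(x^nQ_s(x))=\mu_{n,s}$.

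First I would extract a recurrence in $r$ for the left-hand side from the three-term recurrence \eqref{eq:three_term}. Multiplying $P_{r+1}(x)=(x-b_r)P_r(x)-(c_rx^2+a_rx+\lambda_r)P_{r-1}(x)$ by $x^nQ_s(x)$ and applying $\cL$ (which is legitimate, since each $x^kP_jQ_s=x^kP_jP_s/d_s$ is a polynomial over $d_s$ and hence lies in $W$ by \Cref{thm:basis_V}) yields, by linearity,
\[
  \mu_{n,r+1,s}=\mu_{n+1,r,s}-b_r\mu_{n,r,s}-c_r\mu_{n+2,r-1,s}-a_r\mu_{n+1,r-1,s}-\lambda_r\mu_{n,r-1,s}
\]
for $r\ge1$, together with the truncated version $\mu_{n,1,s}=\mu_{n+1,0,s}-b_0\mu_{n,0,s}$ at $r=0$, where $P_{-1}(x)=0$.

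Next I would derive the matching recurrence for $M_{n,r,s}$ by decomposing a path according to its first step. The crucial observation is that $\wt$ depends only on the height at which a step begins, so horizontal translation preserves weights: a path from $(1,h)$ to $(n+1,s)$ has the same weight as the corresponding path from $(0,h)$ to $(n,s)$, and a path from $(-1,h)$ to $(n+1,s)$ corresponds to one from $(0,h)$ to $(n+2,s)$. Classifying the paths in $\gm_{n+1,r,s}$ by whether the first step out of $(0,r)$ is $U,H,D,V$, or $B$ (with weights $1,b_r,\lambda_r,a_r,c_r$) and deleting that step gives, for $r\ge1$,
\[
  M_{n+1,r,s}=M_{n,r+1,s}+b_rM_{n,r,s}+\lambda_rM_{n,r-1,s}+a_rM_{n+1,r-1,s}+c_rM_{n+2,r-1,s};
\]
solving for $M_{n,r+1,s}$ reproduces exactly the recurrence above. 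At height $r=0$ only the steps $U$ and $H$ are available, since $D,V,B$ would leave $\ZZ\times\ZZ_{\ge0}$, and this yields precisely the truncated recurrence, mirroring the vanishing of the $P_{-1}$ term. Because each decomposition groups the (possibly infinite) path set into finitely many classes, each of which bijects with a shifted path set while multiplying weights by the first-step weight, these are valid identities of formal power series.

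Finally, strong induction on $r$ closes the argument: assuming $\mu_{n,r',s}=M_{n,r',s}$ for all $n,s$ and all $r'\le r$, the two recurrences—which involve only the indices $r$ and $r-1$—force $\mu_{n,r+1,s}=M_{n,r+1,s}$. I expect the main obstacle to be the bookkeeping of the horizontal shifts in the first-step decomposition—tracking the $x$-coordinates of the tails correctly and invoking translation invariance of $\wt$ cleanly—together with the careful treatment of the boundary case $r=0$, where the reduced set of legal first steps must be shown to correspond to the degenerate three-term recurrence with $P_{-1}(x)=0$.
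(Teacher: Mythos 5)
Your proposal is correct and follows essentially the same route as the paper: induction on $r$ with base case \Cref{lem:mu_nm}, the recurrence in $r$ obtained by multiplying the three-term recurrence by $x^nQ_s(x)$ and applying $\cL$, and the matching recurrence for the path sums via first-step decomposition. The extra care you take with the $r=0$ boundary (where $P_{-1}=0$ matches the absence of $D,V,B$ steps at height $0$) and with membership in $W$ is left implicit in the paper but is consistent with its argument.
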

\begin{proof}
  Let \( \mu'_{n,r,s}=\sum_{P\in\gm_{n,r,s}} \wt(P) \). We will prove
  \( \mu_{n,r,s} = \mu'_{n,r,s} \) by induction on \( r \). The base
  step \( r=0 \) follows from \Cref{lem:mu_nm}. For the inductive
  step, let \( k\ge0 \) and suppose that
  \( \mu_{n,r,s} = \mu'_{n,r,s} \) holds for all \( r\le k \).
  Replacing \( n \) by \( k \) in \eqref{eq:three_term}, multiplying
  \( x^{n}Q_s(x) \) to both sides of the resulting equation, and
  taking \( \LL \), we obtain
  \begin{equation}\label{eq:3}
    \mu_{n,k+1,s} = \mu_{n+1,k,s}-b_k \mu_{n,k,s} - c_k \mu_{n+2,k-1,s} - a_k \mu_{n+1,k-1,s} - \lambda_k \mu_{n,k-1,s}.
\end{equation}

On the other hand, considering the first step of
\( p\in \gm_{n+1,k,s} \), we have
\[
   \mu'_{n+1,k,s}  = \mu'_{n,k+1,s}+b_k \mu'_{n,k,s} + c_k \mu'_{n+2,k-1,s} + a_k \mu'_{n+1,k-1,s} + \lambda_k \mu'_{n,k-1,s},
\]
which is the same recurrence as \eqref{eq:3}. Thus, by the inductive
hypothesis, \( \mu_{n,r,s} = \mu'_{n,r,s} \) also holds for \( r=k+1 \).
By induction, we obtain the theorem.
\end{proof}

\section{Generalized orthogonal polynomials of type \( R_{II} \)}
\label{sec:gener-orth-polyn}

In this section, we extend the definition of orthogonal polynomials of
type \( R_{II} \) by relaxing some of their conditions.

\begin{defn}\label{def:R2-2}
  We say that \( (P_n(x))_{n\ge0} \) is a sequence of \emph{generalized
  orthogonal polynomials of type \( R_{II} \)}, or \emph{\( R_{II} \)
    polynomials} for short, if \( P_{-1}(x) = 0, P_0(x) = 1 \), and
\[
   P_{n+1}(x) = (x - b_n) P_n(x) - (c_nx^2 + a_nx + \lambda_n) P_{n-1}(x), \quad n\ge0, 
\]
where
\( \va=(a_1,a_2,\ldots), \vb=(b_1,b_2,\ldots), \vc=(c_1,c_2,\ldots)\),
and \( \vl=(\lambda_1,\lambda_2,\ldots)\) are sequences such that,
for all \( n\ge1 \), at least one of \( \lambda_n \), \( a_n \), or
\( c_n \) is nonzero.
We will use the following
notation:
\[
  d_n(x) = \prod_{i=1}^n(c_ix^2+a_ix+\lambda_i), \qquad   Q_n(x)=\frac{P_n(x)}{d_n(x)}.
\]
\end{defn}

Note that \( R_{II} \) polynomials
contain various types of orthogonal polynomials:
\begin{itemize}
\item Orthogonal polynomials of type \( R_{II} \) are \( R_{II} \)
polynomials such that 
\( c_n\neq0, P_n(\alpha_n)\ne0 \), and \( P_n(\beta_n)\ne0 \) for
\( n\ge1 \), where \( \alpha_n \) and \( \beta_n \) are the roots of
\( c_nx^2 + a_nx + \lambda_n \).
\item Orthogonal polynomials of type \( R_{I} \) are \( R_{II} \)
  polynomials such that \( c_n=0, a_n\ne 0 \), and
  \( P_n(-\lambda_n/a_n)\ne0 \) for \( n\ge1 \).
\item Laurent biorthogonal polynomials are \( R_{II} \) polynomials
  such that \( c_n=\lambda_n=0, a_n\ne 0 \), and \( P_n(0)\ne0 \)
  for \( n\ge1 \).
\item Classical orthogonal polynomials are \( R_{II} \) polynomials
  such that \( c_n=a_n=0 \) and \( \lambda_n\ne0 \) for \( n\ge1 \).
\end{itemize}

Let
\[
  W = \Span \{x^n Q_m(x): n,m\ge 0\}.
\]
Recall from \eqref{eq:Q-rr} that, for \( k\ge0 \),
\begin{equation}\label{eq:5}
  (c_{k+1}x^2+a_{k+1}x+\lambda_{k+1})Q_{k+1}(x)=(x-b_{k})Q_k(x)-Q_{k-1}(x),
\end{equation}
where \( Q_{-1}(x) = 0 \).
If we multiply \( x^\ell \), we obtain
\begin{multline}\label{eq:8}
  c_{k+1}x^{\ell+2}Q_{k+1}(x)
  + a_{k+1}x^{\ell+1}Q_{k+1}(x)+\lambda_{k+1}x^\ell Q_{k+1}(x)\\
  =x^{\ell+1}Q_k(x) -b_{k}x^\ell Q_k(x)- x^\ell Q_{k-1}(x).
\end{multline}

Let \(S \subseteq \ZZ_{\ge0}^2 \) and
\( B = \{x^n Q_m(x): (n,m)\in S\} \). We say that \( x^n Q_m(x) \)
\emph{is obtained from \( B \) by a single application of
  \eqref{eq:5}} if there exist \( k,\ell \ge0 \) such that
\( x^n Q_m(x) \) appears in \eqref{eq:8} with a nonzero coefficient
and, for every term \( x^{n'} Q_{m'}(x) \) in \eqref{eq:8} with
\( (n',m')\ne (n,m) \), either its coefficient is zero or it is
contained in \( B \). We say that \( x^n Q_m(x) \) \emph{is obtained
  from \( B \) by multiple applications of \eqref{eq:5}} if there is a
sequence \( (n_1,m_1),(n_2,m_2),\dots,(n_t,m_t) \) with
\( (n_t,m_t) = (n,m) \) such that \( x^{n_i} Q_{m_i}(x) \) is obtained from
\( B_{i-1} \) by a single application of \eqref{eq:5} for all
\( i=1,2,\dots,t \), where \( B_0=B \) and
\( B_{i} = B_{i-1} \cup\{x^{n_i} Q_{m_i}(x)\} \).

\begin{defn}
  A basis \( B \) of \( W \) is said to be \emph{good} if
  \( B = \{x^n Q_m(x): (n,m)\in S\} \) for a subset
  \( S \subseteq \ZZ_{\ge0}^2 \) and, for every \( n,m\ge0 \),
  \( x^n Q_m(x) \) is obtained from \( B \) by multiple applications
  of \eqref{eq:8}. 
\end{defn}

\begin{thm}\label{thm:master}
  Suppose that there is a good basis of \( W \). Then there is a
  linear functional \( \LL \) on \( W \) such that, for all
  \( n,r,s\ge0 \),
\[
  \LL(x^n P_r(x) Q_s(x)) = \sum_{p\in \gm_{n,r,s}}
  \wt(p).
\]
\end{thm}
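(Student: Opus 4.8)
The plan is to define \( \LL \) directly on the given good basis and then verify the combinatorial formula by propagating it from the basis to all of \( W \) using the recurrence \eqref{eq:8}, mirroring the strategy of \Cref{lem:mu_nm} and \Cref{thm:mu_nrs} but with the good-basis hypothesis replacing the assumption \( c_n\ne0 \). Writing the good basis as \( B = \{x^nQ_m(x):(n,m)\in S\} \), I would set
\[
  \LL(x^nQ_m(x)) = \mu'_{n,m} := \sum_{p\in\gm_{n,m}}\wt(p) \qquad \text{for } (n,m)\in S,
\]
and extend by linearity; since \( B \) is a basis of \( W \), this determines a unique linear functional \( \LL \) on \( W \). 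The whole proof then reduces to two claims: first, that \( \LL(x^nQ_m(x)) = \mu'_{n,m} \) for \emph{all} \( n,m\ge0 \), not only those in \( S \); and second, that this base case propagates to \( \LL(x^nP_r(x)Q_s(x)) = \mu'_{n,r,s} := \sum_{p\in\gm_{n,r,s}}\wt(p) \).

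For the first claim, the key observation is that both \( \LL(x^nQ_m(x)) \) and \( \mu'_{n,m} \) satisfy the same recurrence coming from \eqref{eq:8}. Applying the linear functional \( \LL \) to \eqref{eq:8} gives, for all \( k,\ell\ge0 \),
\[
  c_{k+1}\LL(x^{\ell+2}Q_{k+1}) + a_{k+1}\LL(x^{\ell+1}Q_{k+1}) + \lambda_{k+1}\LL(x^{\ell}Q_{k+1}) = \LL(x^{\ell+1}Q_k) - b_k\LL(x^{\ell}Q_k) - \LL(x^{\ell}Q_{k-1}),
\]
while classifying a path in \( \gm_{\ell+1,k} \) by its last step (which may be \( U,H,D,V \), or \( B \), with weights \( 1,b_k,\lambda_{k+1},a_{k+1},c_{k+1} \)) yields exactly the same identity with each \( \LL(x^iQ_j) \) replaced by \( \mu'_{i,j} \); here I would adopt the conventions \( Q_{-1}(x)=0 \) and \( \mu'_{i,-1}=0 \) to cover the boundary \( k=0 \). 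I would then induct along the sequence \( (n_1,m_1),\dots,(n_t,m_t) \) witnessing that \( x^nQ_m(x) \) is obtained from \( B \) by multiple applications of \eqref{eq:8}: the base set \( B_0=B \) satisfies \( \LL=\mu' \) by definition, and at each \emph{single application} the term \( x^{n_i}Q_{m_i}(x) \) occurs with a nonzero coefficient while every other term lies in \( B_{i-1} \), so solving the (identical) recurrence for that term and dividing by its nonzero coefficient yields \( \LL(x^{n_i}Q_{m_i})=\mu'_{n_i,m_i} \).

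For the second claim, with the base case \( r=0 \) now in hand, I would reproduce the induction on \( r \) from the proof of \Cref{thm:mu_nrs}: since \( x^nP_r(x)Q_s(x) = \sum_j\rho_{r,j}\,x^{n+j}Q_s(x) \) is a finite combination of spanning elements, \( \LL(x^nP_rQ_s) \) is defined, and multiplying the three-term recurrence \eqref{eq:three_term} by \( x^nQ_s(x) \) and applying \( \LL \) produces the recurrence \eqref{eq:3}, which matches the first-step decomposition of paths in \( \gm_{n+1,r,s} \). This step uses neither \( c_n\ne0 \) nor any vanishing hypothesis, so it carries over unchanged to the general setting.

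The main obstacle is the first claim, and within it the bookkeeping needed to confirm that \( \mu' \) satisfies the recurrence from \eqref{eq:8} uniformly in \( k \) and \( \ell \), including the degenerate boundaries (for instance \( k=0 \), where the \( Q_{k-1} \) term vanishes, and the empty-path case, which does not arise because the endpoint \( (\ell+1,k) \) is never the origin). Once the two recurrences are seen to coincide term by term, the good-basis hypothesis does the remaining work: it guarantees that every \( x^nQ_m(x) \) can be reached by repeatedly solving the recurrence for a coefficient that the definition of a single application forces to be nonzero, which is precisely what makes the propagation well defined.
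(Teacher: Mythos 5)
Your proposal is correct and follows essentially the same route as the paper: define \( \LL \) on the good basis by the path generating functions \( R_{n,m} \), extend by linearity, propagate the identity \( \LL(x^nQ_m(x))=R_{n,m} \) to all \( n,m \) by matching the recurrence obtained from applying \( \LL \) to \eqref{eq:8} with the last-step decomposition of paths in \( \gm_{\ell+1,k} \), and then handle general \( r \) by the induction of \Cref{thm:mu_nrs}. The only difference is cosmetic: you make the boundary conventions (\( Q_{-1}=0 \), the case \( k=0 \)) and the division by the nonzero coefficient explicit, which the paper leaves implicit.
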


\begin{proof}
Let \( B = \{x^n Q_m(x): (n,m)\in S\} \) be a good basis of
\( W \).
Define \( \LL \) on \( B \) by
\begin{equation}\label{eq:4}
  \LL(x^n Q_m(x)) = R_{n,m},
\end{equation}
where \( R_{n,m} = \sum_{p\in \gm_{n,m}} \wt(p) \). Since \( B \) is a
basis, we can extend the definition of \( \LL \) to \( W \) by
linearity. We claim that \eqref{eq:4} holds for all \( n,m\ge0 \).

To prove the claim, first suppose that \( x^n Q_m(x) \) is obtained
from \( B \) by a single application of \eqref{eq:5}. Applying \( \LL \)
to both sides of \eqref{eq:8} gives
\begin{equation}\label{eq:9}
    c_{k+1}R'_{\ell+2,k+1}
  + a_{k+1}R'_{\ell+1,k+1} + \lambda_{k+1} R'_{\ell,k+1}
  = R'_{\ell+1,k} -b_{k} R'_{\ell,k}- R'_{\ell,k-1},
\end{equation}
where \( R'_{i,j} = \LL(x^i Q_j(x)) \).
By the construction of \( \LL \), if \( x^i Q_j(x)\in B \),
then \( R'_{i,j} = R_{i,j} \).
Considering the last step of any path \( p\in \gm_{\ell+1,k} \), we have
\begin{equation}\label{eq:10}
  R_{\ell+1,k} = 
    c_{k+1}R_{\ell+2,k+1}
  + a_{k+1}R_{\ell+1,k+1} + \lambda_{k+1} R_{\ell,k+1}
  +b_{k} R_{\ell,k} + R_{\ell,k-1}.
\end{equation}
Since every term \( R'_{i,j} \) with \( (i,j)\ne (n,m) \) appearing
in \eqref{eq:9} satisfies \( R'_{i,j} = R_{i,j} \), comparing
\eqref{eq:9} and \eqref{eq:10} gives \(  R'_{n,m} = R_{n,m} \),
hence \( \LL(x^n Q_m(x)) = R_{n,m} \).

Now consider \( x^n Q_m(x) \) for arbitrary \( n,m\ge0 \). Since \( B \) is
a good basis, \( x^n Q_m(x) \) is obtained from \( B \) by multiple
applications of \eqref{eq:8}. Applying the previous argument
iteratively, we obtain that \( \LL(x^n Q_m(x)) = R_{n,m} \).
Therefore \eqref{eq:4} holds for all \( n,m\ge0 \), as claimed.

This shows the theorem for the case \( r=0 \). The general case
\( r\ge0 \) can be proved by induction by the same arguments in the
proof of \Cref{thm:mu_nrs}.
\end{proof}

Since \( \gm_{n,0,m}=\emptyset \) for \( n<m \), we
obtain the following orthogonality.

\begin{cor}\label{cor:master}
  Suppose that there is a good basis of \( W \). Then there is a
  linear functional \( \LL \) on \( W \) such that \( \LL(1) \) is a
  power series in \( \vc=(c_1,c_2,\dots) \) with constant term \( 1 \)
  and, for all \( 0\le n<m \),
\[
  \LL(x^n Q_m(x)) = 0.
\]
\end{cor}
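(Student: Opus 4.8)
The plan is to derive this corollary directly from \Cref{thm:master}, which under the hypothesis supplies a linear functional \( \LL \) with \( \LL(x^n P_r(x) Q_s(x)) = \sum_{p\in\gm_{n,r,s}}\wt(p) \) for all \( n,r,s\ge0 \). Specializing to \( r=0 \) and using \( P_0(x)=1 \) gives \( \LL(x^n Q_m(x)) = \sum_{p\in\gm_{n,0,m}}\wt(p) \), so both assertions reduce to understanding the sets \( \gm_{n,0,m} \). First I would establish the elementary emptiness claim \( \gm_{n,0,m}=\emptyset \) for \( 0\le n<m \) by a step-counting argument.

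Concretely, for a path in \( \gm_{n,0,m} \), let \( u,h,d,v,b \) denote the numbers of steps \( U,H,D,V,B \). The horizontal and vertical displacements give
\begin{equation*}
  u+h+d-b = n, \qquad u-d-v-b = m.
\end{equation*}
Subtracting the second equation from the first yields \( h+2d+v = n-m \). Since \( h,d,v\ge0 \), this forces \( n\ge m \), so no path exists when \( n<m \); hence \( \gm_{n,0,m}=\emptyset \) and \( \LL(x^n Q_m(x))=0 \) for \( 0\le n<m \), which is the desired orthogonality.

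For the statement about \( \LL(1) \), I would note that \( Q_0(x)=1 \), so \( \LL(1)=\sum_{p\in\gm_0}\wt(p) \) with \( \gm_0=\gm_{0,0,0} \). Setting \( n=m=0 \) in the displacement relations forces \( h+2d+v=0 \), hence \( h=d=v=0 \), and then \( u=b \). Thus every path in \( \gm_0 \) uses only \( U \) and \( B \) steps, and by the weight rule \eqref{eq:weight_GM} its weight is a product of factors \( c_i \) (one for each \( B \) step at height \( i \)), and in particular a monomial in \( \vc \). The unique path of weight \( 1 \) is the empty path, while every nonempty path has at least one \( B \) step and so contributes a term of positive total degree in \( \vc \). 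Therefore \( \LL(1) \) is a power series in \( \vc=(c_1,c_2,\dots) \) with constant term \( 1 \).

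The only genuine content beyond invoking \Cref{thm:master} is the bookkeeping of step displacements, which I expect to be routine; the mild subtlety to be careful about is confirming that the \( V \) step contributes \( 0 \) horizontally and \( -1 \) vertically (so it cannot help the path gain height while advancing in \( x \)), since this is exactly what makes the inequality \( n\ge m \) tight and forces the \( \gm_0 \)-paths to involve only \( U \) and \( B \). No convergence issues arise because \( \gm_0 \) is graded by the number of \( B \) steps.
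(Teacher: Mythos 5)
Your proposal is correct and matches the paper's approach: the paper derives the corollary from \Cref{thm:master} with the one-line observation that \( \gm_{n,0,m}=\emptyset \) for \( n<m \), which is exactly your displacement computation, and your analysis of \( \gm_0 \) (paths using only \( U \) and \( B \) steps, giving a power series in \( \vc \) with constant term \( 1 \)) is the intended justification of the claim about \( \LL(1) \). You simply spell out the bookkeeping that the paper leaves implicit.
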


Since there are good bases as in \Cref{tab:1}, \Cref{thm:master}
implies the combinatorial models of generalized moments for classical
orthogonal polynomials, Laurent biorthogonal polynomials, and
orthogonal polynomials of types \( R_I \) and \( R_{II} \) in
Theorems~\ref{thm:mot}, \ref{thm:sch}, and \ref{thm:MS}.
\Cref{thm:master} also includes orthogonal polynomials that do not
belong to any of these four classes of orthogonal polynomials; for
example, one can consider the sequence \( \vc \) with \( c_{2n+1}=0 \)
and \( c_{2n}\ne0 \).

\begin{table}
  \centering
  \begin{center}
  \begin{tabular}{|c|c|} \hline
  \textbf{polynomials} & \textbf{good bases} \\ \hline
 {orthogonal polynomials} & \(  \{(n,m): m=0\} \) \\ \hline
 {Laurent biorthogonal polynomials} & \(  \{(n,m): n=0 \mbox{ or } m=0\} \) \\ \hline
 {orthogonal polynomials of type \( R_I \)} & \(  \{(n,m): n=0 \mbox{ or } m=0\} \) \\ \hline
  {orthogonal polynomials of type \( R_{II} \)} & \(  \{(n,m): 0\le n\le 1 \mbox{ or } m=0\} \) \\ \hline
\end{tabular}
\end{center}
\caption{Orthogonal polynomials and their good bases.}
\label{tab:1}
\end{table}

Note that in \Cref{thm:master} we have the assumption that \( W \) has
a good basis. It would be interesting to characterize the existence of
a good basis.

\begin{problem}
  Find a characterization of the sequences \( \va \), \( \vb \),
  \( \vc \), and \( \vl \) such that \( W \) has a good basis.
\end{problem}

\section{A combinatorial interpretation for dual coefficients}
\label{subsec:Comb}

In this section, we provide a combinatorial interpretation of the
(generalized) dual coefficients \( \tau_{n,r,s} \) of \( R_{II} \)
polynomials introduced in \Cref{sec:gener-orth-polyn}, which are
orthogonal polynomials of type \( R_{II} \) with relaxed conditions.

We start with an observation on the combinatorial interpretation for
generalized moments \( \mu_{n,r,s} \) of \( R_{II} \) polynomials. Let
\( P_n(x) \) be \( R_{II} \) polynomials defined in the previous
section. Recall that dual coefficients \( \tau_{n,r,s} \) are defined
by
\begin{equation}\label{eq:1}
  x^nP_r(x)=\sum_{s\ge0}\tau_{n,r,s}P_s(x).
\end{equation}
  Multiply \( Q_\ell(x) \) and take \( \cL \) on both sides of
  \eqref{eq:1} yields
\begin{equation}\label{eq:2}
  \mu_{n,r,\ell} = \sum_{s\ge0}\tau_{n,r,s}\mu_{0,s,\ell}.
\end{equation}

By \Cref{thm:master}, \( \mu_{n,r,\ell} \) is the generating function
for all \( R_{II} \) paths from \( (0,r) \) to \( (n,\ell) \). Hence, the
identity \eqref{eq:2} naturally suggests that \( \tau_{n,r,s} \) is
the generating function for the restricted \( R_{II} \) paths from
\( (0,r) \) to \( (n,s) \) defined in \Cref{def:rgm}. Indeed, such a
generating function satisfies \eqref{eq:2}. However, to conclude that
\( \tau_{n,r,s} \) is equal to this generating function, it must be
shown that there is a unique solution for \( \tau_{n,r,s} \) to the
equation \eqref{eq:2}. Unfortunately, this is not the case. For
example, if \( \tau'_{0,0,1}= \mu_{0,1,0}^{-1} \mu_{0,0,0} \) and
\( \tau'_{0,0,s}= 0 \) for \( s\ne 1 \), then we also have
\[
  \mu_{0,0,0} = \sum_{s\ge0}\tau'_{0,0,s}\mu_{0,s,0}.
\]
Interestingly, however, the suggested generating function turns out to
be equal to \( \tau_{n,r,s} \), which is the main result in this section.

\begin{thm}\label{thm:tau_nrs}
  For nonnegative integers \( n \), \( r \), and \( s \), we have
  \begin{equation}\label{eq:17}
   \tau_{n,r,s} = \sum_{p\in\rgm_{n,r,s}}\wt(p).
  \end{equation}
\end{thm}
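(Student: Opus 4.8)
The plan is to establish the identity $\tau_{n,r,s} = \sum_{p\in\rgm_{n,r,s}}\wt(p)$ by setting up a recurrence for both sides and matching them, using the defining three-term recurrence for $P_n(x)$ and the \emph{restricted condition} on the paths. The key conceptual point is that the dual coefficients $\tau_{n,r,s}$ are \emph{defined} by the expansion \eqref{eq:1}, so they inherit a recurrence directly from \eqref{eq:three_term}; meanwhile the right-hand side, call it $\tau'_{n,r,s}=\sum_{p\in\rgm_{n,r,s}}\wt(p)$, satisfies a combinatorial recurrence obtained by decomposing a restricted $R_{II}$ path according to its first step (mirroring the argument in the proof of \Cref{thm:mu_nrs}). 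My goal is to show these two recurrences, together with matching base cases, coincide.

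\smallskip

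First I would record the recurrence for $\tau_{n,r,s}$. Replacing $n$ by $r$ in \eqref{eq:three_term} gives $P_{r+1}(x)=(x-b_r)P_r(x)-(c_rx^2+a_rx+\lambda_r)P_{r-1}(x)$; multiplying by $x^n$ and using the definition \eqref{eq:1} to expand each $x^mP_j(x)$ in the $P_s$-basis, I obtain, by comparing coefficients of $P_s(x)$,
\begin{equation}\label{eq:tau-rec-plan}
  \tau_{n,r+1,s} = \tau_{n+1,r,s} - b_r\,\tau_{n,r,s} - c_r\,\tau_{n+2,r-1,s} - a_r\,\tau_{n+1,r-1,s} - \lambda_r\,\tau_{n,r-1,s},
\end{equation}
which is exactly the recurrence \eqref{eq:3} satisfied by $\mu_{n,r,s}$. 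This handles the recursion in $r$ and reduces everything to the base case $r=0$, i.e.\ to proving $\tau_{n,0,s}=\sum_{p\in\rgm_{n,0,s}}\wt(p)$.

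\smallskip

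For the base case $r=0$ I would expand $x^n = x\cdot x^{n-1}$ and use the $n$-recursion coming from \eqref{eq:three_term} in the form \eqref{eq:Q-rr}, or equivalently expand $x^n$ directly. The essential step is the combinatorial decomposition: a restricted $R_{II}$ path in $\rgm_{n,0,s}$ is analyzed by its \emph{last} step before it first reaches the line $x=n$, where the \emph{restricted condition} (the path halts upon touching $x=n$) forces the final portion of the path to behave differently from an unrestricted $R_{II}$ path. Concretely, the restricted condition means that steps $U$, $H$, $D$ (which increase the $x$-coordinate) can only occur as the terminal step reaching $x=n$, while the vertical steps $V$ and $B$ may be used to adjust the height at $x=n$; this is precisely what makes $\tau'_{n,r,s}$ satisfy the same recurrence \eqref{eq:tau-rec-plan} as $\tau_{n,r,s}$ rather than the one for $\mu_{n,r,s}$. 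I would verify that this first-step/last-step decomposition of restricted paths reproduces \eqref{eq:tau-rec-plan}, and check the initial values (e.g.\ $\tau_{0,0,s}=\delta_{0,s}$, matching the empty path) against \Cref{def:rgm}.

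\smallskip

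\textbf{The main obstacle} I anticipate is handling the restricted condition cleanly at the terminal line $x=n$: unlike the moment case, where \Cref{thm:master} and a good basis supply the linear functional directly, here there is no functional to lean on and the non-uniqueness of solutions to \eqref{eq:2} (exhibited just before the statement) means I cannot simply invoke \eqref{eq:2} to transfer the combinatorial model from $\mu$ to $\tau$. Instead I must show the two quantities satisfy an \emph{identical} recurrence with identical initial conditions, and the delicate bookkeeping is exactly at the boundary where the restricted paths terminate --- ensuring that the vertical-down and backward-down steps at $x=n$ are counted with the correct weights $a_i$ and $c_i$ so that the combinatorial recurrence for $\tau'$ matches \eqref{eq:tau-rec-plan} term by term, including the $c_r$ and $a_r$ contributions that distinguish the $R_{II}$ case from the classical one.
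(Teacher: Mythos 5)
Your reduction in \( r \) is sound: the recurrence you write for \( \tau_{n,r,s} \) follows from \eqref{eq:three_term} because \( (P_s(x))_{s\ge0} \) is a basis of the polynomial ring, and the first-step decomposition of restricted paths reproduces the same recurrence (a first step \( U,H,D,V,B \) of a path in \( \rgm_{n+1,k,s} \) leaves, after translation, a restricted path in \( \rgm_{n,k+1,s} \), \( \rgm_{n,k,s} \), \( \rgm_{n,k-1,s} \), \( \rgm_{n+1,k-1,s} \), or \( \rgm_{n+2,k-1,s} \), exactly as in \eqref{eq:3}). The gap is the base case, which is where the whole difficulty of \Cref{thm:tau_nrs} lives. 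First, your structural claim is false: in a path of \( \rgm_{n,0,s} \) with \( n\ge 2 \) the steps \( U,H,D \) occur many times before the terminal step --- the restriction only forbids touching the line \( x=n \) prematurely, not increasing the \( x \)-coordinate. Second, a last-step decomposition does not close up: deleting the final step of a path in \( \rgm_{n,0,s} \) leaves a prefix that may visit \( x=n-1 \) several times, so it is not in \( \rgm_{n-1,0,s'} \); cutting instead at the \emph{first} visit to \( x=n-1 \) gives \( \tau'_{n,0,t}=\sum_s \tau'_{n-1,0,s}\,\tau'_{1,s,t} \), which matches the algebraic identity \( \tau_{n,0,t}=\sum_s\tau_{n-1,0,s}\,\tau_{1,s,t} \) only if one already knows \( \tau_{1,s,t}=\tau'_{1,s,t} \) for all \( s,t \). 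That \( n=1 \), general-\( r \) statement equates the coefficient of \( P_t \) in \( xP_s(x) \) (a rational function, since the \( P_k \) are not monic) with an \emph{infinite} weighted sum of paths (e.g.\ all of \( (BU)^kU\in\rgm_{1,s,s+1} \) for \( s\ge1 \)), and it cannot be extracted from your \( r \)-recurrence without circularity, because that recurrence expresses \( \tau_{1,r+1,s} \) through \( \tau \)'s with \emph{larger} first index. This is precisely the obstruction recorded in the Remark at the end of \Cref{subsec:Comb}: the cases \( n=0,1 \) would suffice to run an induction, but \( n=1 \) ``is not obvious,'' and your proposal does not supply it.

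The paper's proof takes a different route that bypasses the base-case problem entirely: it verifies the defining identity \( x^nP_r(x)=\sum_s\tau'_{n,r,s}P_s(x) \) for all \( n,r \) simultaneously by encoding each \( P_k(x) \) as a signed sum over dotted tilings and constructing a sign-reversing involution on pairs consisting of a restricted \( R_{II} \) path and a dotted tiling, whose fixed points contribute exactly \( x^nP_r(x) \). To repair your argument you would need an independent proof of \( \tau_{1,r,s}=\tau'_{1,r,s} \) for all \( r,s \) (or some equivalent input); as written, the proposal asserts rather than proves the step on which everything rests.
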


Note that in order to prove \Cref{thm:tau_nrs}, it suffices to show
that the right-hand side of \eqref{eq:17}, denoted
\( \tau'_{n,r,s} \), satisfies the same relation \eqref{eq:1} as
\( \tau_{n,r,s} \), that is,
\begin{equation}\label{eq:14}
  x^nP_r(x) = \sum_{s\ge0}\tau'_{n,r,s}P_s(x).
\end{equation}
We will prove \eqref{eq:14} by giving combinatorial interpretations
for both sides and finding a sign-reversing involution. To do this we
introduce the following definition.

\begin{defn}\label{def:tile}
  Consider a \( 1\times n \) square board whose boxes are labeled
  \( 1 \) through \( n \) from left to right. Let \( M_i \)
  (resp.~\( D_i \)) be a monomino (resp.~domino) with \( i \) dots
  inside. A \emph{dotted tiling of size \( n \)} is a tiling of the
  board using tiles in \( \{M_0,M_1,D_0,D_1,D_{2}\} \). The set of
  dotted tilings of size \( n \) is denoted by \( \dt_n \).

  For a dotted tiling \( T \), we define the weight \( \wt(T) \) to be
  the product of the weights \( \wt(\tau) \) of the tiles \( \tau \) in
  \( T \), where
\[
  \wt(\tau)=\begin{cases}
             x & \mbox{if } \tau=M_1,\\
             -b_{i-1} & \mbox{if } \tau=M_0 \mbox{ with label \( i \)},\\
             -\lambda_{i-1} & \mbox{if } \tau=D_0 \mbox{ with label \( (i-1,i) \)},\\
             -a_{i-1}x & \mbox{if } \tau=D_1 \mbox{ with label \( (i-1,i) \)},\\
             -c_{i-1}x^2 & \mbox{if } \tau=D_2 \mbox{ with label \( (i-1,i) \)}.
            \end{cases}
\]
See \Cref{fig:dotted_tiling} for an example.
\end{defn}

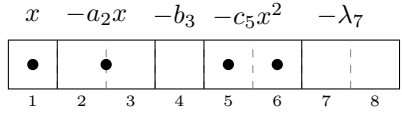
\begin{figure}
  \centering
  \begin{tikzpicture}[scale=1.3]
    \draw (0,0) rectangle (4,.5);
    \foreach \i in {1,...,7}{
      \draw[very thin, gray, dashed] (\i/2,0) -- (\i/2,.5);
    }

    {\tiny
    \foreach \i in {1,...,8}
    {
      \node at (\i/2-.25,-.12){\i};
    }
    }
    \draw (.5,0) -- (.5,.5);
    \draw (1.5,0) -- (1.5,.5);
    \draw (2,0) -- (2,.5);
    \draw (3,0) -- (3,.5);
    
    \node at (.25,.25)[circle,fill,inner sep=1.5pt]{};
    \node at (1,.25)[circle,fill,inner sep=1.5pt]{};
    \node at (2.25,.25)[circle,fill,inner sep=1.5pt]{};
    \node at (2.75,.25)[circle,fill,inner sep=1.5pt]{};

    \node at (.25,.75){\( x \)};
    \node at (.9,.75){\( -a_2x \)};
    \node at (1.7,.75){\( -b_3 \)};
    \node at (2.45,.75){\( -c_5x^2 \)};
    \node at (3.4,.75){\( -\lambda_7 \)};
  \end{tikzpicture}
  \caption{An example of \( T\in\dt_{8} \) with \( \wt(T)=a_2b_3c_5\lambda_7. \)}
  \label{fig:dotted_tiling}
\end{figure}

By~\eqref{eq:three_term},
it is immediate that
  \begin{equation}\label{eq:18}
   P_n(x) = \sum_{T\in\dt_{n}}\wt(T). 
  \end{equation}
  We are now ready to prove \Cref{thm:tau_nrs}.

\begin{proof}[Proof of \Cref{thm:tau_nrs}]
  As discussed before, it suffices to show \eqref{eq:14}.
  By~\eqref{eq:18}, we can rewrite \eqref{eq:14} as
  \begin{equation}\label{eq:15}
   \sum_{k\ge0}\tau'_{n,r,k}P_k(x) = \sum_{(p,T)\in X} \wt(p)\wt(T),
  \end{equation}
  where \( X \) is the set of pairs \( (p,T) \) such that
  \( p\in \rgm_{n,r,k} \) and \( T\in \dt_k \) for some \( k\ge0 \).
  To prove \eqref{eq:14}, we establish a sign-reversing involution on
  \( X \) such that the sum over its fixed points is equal to
  \( x^nP_r(x) \).

  Consider \( (p,T)\in X \) with \( p\in \rgm_{n,r,k} \) and
  \( T\in \dt_k \). For \( m\ge1 \), we denote by \( p^m \) the
  \( m \)th-to-last step in \( p \), and by \( T^m \) the
  \( m \)th-to-last tile in \( T \). Let \( i \) (resp.~\( j \)) be
  the maximum number of consecutive \( U \)'s (resp.~\( M_1 \)'s) at
  the end of \( p \) (resp.~\( T \)). Here, if \( p^1\ne U \), then
  \( i=0 \), and if \( T^1\ne M_1 \), then \( j=0 \). We define
  \( \phi(p,T) = (p',T') \) considering the two cases \( i \le j \)
  and \( i > j \).
  
  \medskip

  \noindent
  \textbf{Case 1: \( i \le j \).} Note that \( p^{i+1} \) does not
  exist if and only if \( p \) consists only of up-steps, in which
  case \( i=n=k-r \). We define \( p' \) and \( T' \) in the following
  subcases.
  \begin{description}
  \item[Case 1-1] \( p^{i+1} = H \). Let \( p' \) be the path obtained from
    \( p \) by replacing \( p^{i+1} \) with \( U \). Let \( T' \) be the
    dotted tiling obtained from \( T \) by inserting a new tile \( M_0 \)
    after \( T^{i+1} \). 
  \item[Case 1-2] \( p^{i+1} = D \). Let \( p' \) be the path obtained from
    \( p \) by replacing \( p^{i+1} \) with \( U \). Let \( T' \) be the
    dotted tiling obtained from \( T \) by inserting a new tile \( D_0 \)
    after \( T^{i+1} \). 
  \item[Case 1-3] \( p^{i+1} = V \). In this case, we have \( i\ge1 \). Let
    \( p' \) be the path obtained from \( p \) by removing
    \( p^{i+1} \). Let \( T' \) be the dotted tiling obtained from \( T \) by
    replacing \( T^i \) with \( D_1 \).
  \item[Case 1-4] \( p^{i+1} = B \). In this case, we have
    \( i\ge2 \). Let \( p' \) be the path obtained from \( p \) by
    removing the pair of steps \( (p^{i+1},U) \), where \( U \) is the
    step following \( p^{i+1} \) 
    . Let \( T' \) be the dotted tiling obtained
    from \( T \) by replacing the pair of tiles \( (T^i,T^{i-1}) \)
    with a single tile \( D_2 \).
  \item[Case 1-5] \( p^{i+1} \) does not exist. In this case, let
    \( (p',T') = (p,T) \).
  \end{description}

  \medskip
  \noindent
  \textbf{Case 2: \( i > j \).} In this case, the starting height
  \( k-i \) of the step \( p^{i} \) is at least \( 1 \) because
  \( k-i>k-j\ge0 \). We define \( p' \) and \( T' \) in the following
  subcases.
  \begin{description}
  \item[Case 2-1] \( T^{j+1}=M_0 \). Let \( p' \) be the path obtained from
    \( p \) by replacing \( p^{i+1} \) with \( H \). Let \( T' \) be the
    dotted tiling obtained from \( T \) by removing the tile \( T^{j+1} \).
  \item[Case 2-2] \( T^{j+1}=D_0 \). Let \( p' \) be the path obtained from
    \( p \) by replacing \( p^{i+1} \) with \( D \). Let \( T' \) be the
    dotted tiling obtained from \( T \) by removing the tile \( T^{j+1} \).
  \item[Case 2-3] \( T^{j+1}=D_1 \). Let \( p' \) be the path obtained from
    \( p \) by inserting a step \( V \) before \( p^{i+1} \). Let \( T' \)
    be the dotted tiling obtained from \( T \) by replacing \( T^{j+1} \) with
    \( M_1 \).
  \item[Case 2-4] \( T^{j+1}=D_2 \). Let \( p' \) be the path obtained from
    \( p \) by inserting a pair of steps \( (B,U) \) before \( p^{i+1} \). Let \( T' \)
    be the dotted tiling obtained from \( T \) by replacing \( T^{j+1} \) with
   a pair of tiles \( (M_1,M_1) \).
  \end{description}

  See~\Cref{fig:sign_reversing_inv_1,fig:sign_reversing_inv_2,fig:sign_reversing_inv_3,fig:sign_reversing_inv_4}.
  It is straightforward to check that \( \phi \) is a sign-reversing
  involution on \( X \) whose fixed points are the pairs \( (p,T) \),
  where \( p \) is the unique path in \( \gm_{n,r,n+r} \) consisting
  only of up-steps and \( T \) is a dotted tiling in \( \dt_{n+r} \) such
  that the last \( n \) tiles are all equal to \( M_1 \). Since
  \( \wt(p)\wt(T) = x^n \wt(T_0) \), where \( T_0 \) is the dotted tiling
  obtained from \( T \) by removing the last \( n \) tiles, we have
  \begin{equation}\label{eq:16}
    \sum_{(p,T)\in X} \wt(p)\wt(T)
    = x^n \sum_{T_0\in \dt_r} \wt(T_0) = x^n P_r(x).
  \end{equation}
  Then \eqref{eq:14} follows from \eqref{eq:15} and \eqref{eq:16},
  which completes the proof.
\end{proof}

\begin{figure}
  \centering
  \begin{tikzpicture}[scale=0.6]
    \draw[help lines] (0,0) grid (5,5);

    \node at (0,1.5) {\( (0,2) \)};
    \node at (5,5.4) {\( (5,5) \)};
    
    \draw [decorate,decoration={brace,mirror,amplitude=7}] (3,2.8) --node[below=3mm]{\( i=2 \)} (5,2.8);
    \draw  (0,2) -- (1,3) -- (2,4) -- (2,3) -- (3,3) -- (5,5);
    \draw[very thick, red] (2,3) -- (3,3);

    \begin{scope}[shift={(5.5,0)}]  
    \draw (0,0) rectangle (1,5);
    \foreach \i in {1,...,5} {
        \node at (-.2,-.5+\i){{\tiny\( \i \)}};
    }
    \foreach \i in {2,...,4}{
      \draw (0,\i) -- (1,\i);
      \node[circle,fill,inner sep=1.5pt] at (.5, \i+.5){};
    }
    \draw[very thin, gray, dashed] (0,1) -- (1,1){};

    \node[circle,fill,inner sep=1.5pt] at (.5,1){};
    \draw [decorate,rotate = 90,decoration={brace,mirror,amplitude=7}] (2,-1.2) --node[right=3mm]{\( j=3 \)} (5,-1.1){};
    \end{scope}

    \begin{scope}[shift={(11.5,0)}]
      \draw[->] (-2.25,2.4) -- node[above]{(1-1)} (-1.25,2.4); 
      \draw[<-] (-2.25,2.2) -- node[below]{(2-1)} (-1.25,2.2);

      \draw[help lines] (0,0) grid (5,6);
      \node at (0,1.5) {\( (0,2) \)};
      \node at (5,6.4) {\( (5,6) \)};
      
      \draw [decorate,decoration={brace,mirror,amplitude=7}] (2,2.8) --node[below=3mm]{\( i=3 \)} (5,2.8);
      \draw  (0,2) -- (1,3) -- (2,4) -- (2,3) -- (5,6);
      \draw[very thick, red] (2,3) -- (3,4);
      
      \begin{scope}[shift={(5.5,0)}]  
        
        \draw (0,0) rectangle (1,6);
        \foreach \i in {1,...,6} {
          \node at (-.2,-.5+\i){{\tiny\( \i \)}};
        }
        \foreach \i in {2,...,5}{
          \draw (0,\i) -- (1,\i);
        }
        \draw[very thin, gray, dashed] (0,1) -- (1,1){};
        
        \node[circle,fill,inner sep=1.5pt] at (.5,1){};
        \node[circle,fill,inner sep=1.5pt] at (.5,2.5){};
        \node[circle,fill,inner sep=1.5pt] at (.5,4.5){};
        \node[circle,fill,inner sep=1.5pt] at (.5,5.5){};
        \draw [decorate,rotate = 90,decoration={brace,mirror,amplitude=7}] (4,-1.2) --node[right=3mm]{\( j=2 \)} (6,-1.1){};
      \end{scope}
    \end{scope}
  \end{tikzpicture}
  \caption{An example of \( (P,T)\in \rgm_{5,2,5}\times \dt_{5} \) for Case 1-1 and its image \( (\phi(P),\phi(T))\in \rgm_{5,2,6}\times \dt_{6}\) corresponding to Case 2-1, described in the proof of~\Cref{thm:tau_nrs}. }
  \label{fig:sign_reversing_inv_1}
\end{figure}

\begin{figure}
  \centering
  \begin{tikzpicture}[scale=0.6]
    \draw[help lines] (0,0) grid (5,4);
    
    \node at (0,1.5) {\( (0,2) \)};
    \node at (5,4.4) {\( (5,4) \)};
    
    
    \draw [decorate,decoration={brace,mirror,amplitude=7}] (3,1.8) --node[below=3mm]{\( i=2 \)} (5,1.8);
    \draw  (0,2) -- (1,3) -- (2,4) -- (2,3) -- (3,2) -- (5,4);
    \draw[very thick, red] (2,3) -- (3,2);

    \begin{scope}[shift={(5.5,0)}]
      
      \draw (0,0) rectangle (1,4);
      \foreach \i in {1,...,4} {
        \node at (-.2,-.5+\i){{\tiny\( \i \)}};
      }
      \foreach \i in {0,...,3}{
        \draw (0,\i) -- (1,\i);
      \node[circle,fill,inner sep=1.5pt] at (.5, \i+.5){};
    }
    \draw [decorate,rotate = 90,decoration={brace,mirror,amplitude=7}] (0,-1.2) --node[right=3mm]{\( j=4 \)} (4,-1.1){};
  \end{scope}
  
  \begin{scope}[shift={(11.5,0)}]
    
    \draw[help lines] (0,0) grid (5,6);
    \draw[->] (-2.25,2.4) -- node[above]{(1-2)} (-1.25,2.4); 
    \draw[<-] (-2.25,2.2) -- node[below]{(2-2)} (-1.25,2.2);

    \node at (0,1.5) {\( (0,2) \)};
    \node at (5,6.4) {\( (5,6) \)};

    \draw [decorate,decoration={brace,mirror,amplitude=7}] (2,2.8) --node[below=3mm]{\( i=3 \)} (5,2.8);
    \draw  (0,2) -- (1,3) -- (2,4) -- (2,3) -- (5,6);
    \draw[very thick, red] (2,3) -- (3,4);
    
    \begin{scope}[shift={(5.5,0)}]  
     
      \draw (0,0) rectangle (1,6);
      \foreach \i in {1,...,6} {
        \node at (-.2,-.5+\i){{\tiny\( \i \)}};
      }
        \foreach \i in {1,2,4,5}{
          \draw (0,\i) -- (1,\i);
        }
        \draw[very thin, gray, dashed] (0,3) -- (1,3){};
        
        \foreach \i in {0,1,4,5}{
          \node[circle,fill,inner sep=1.5pt] at (.5, \i+.5){};
        }
        
        \draw [decorate,rotate = 90,decoration={brace,mirror,amplitude=7}] (4,-1.2) --node[right=3mm]{\( j=2 \)} (6,-1.1){};
      \end{scope}
    \end{scope}
  \end{tikzpicture}
  \caption{An example of \( (P,T)\in \rgm_{5,2,4}\times \dt_{4} \) for Case 1-2 and its image \( (\phi(P),\phi(T))\in \rgm_{5,2,6}\times \dt_{6}\) corresponding to Case 2-2, described in the proof of~\Cref{thm:tau_nrs}.}
  \label{fig:sign_reversing_inv_2}
\end{figure}

\begin{figure}
  \centering
  \begin{tikzpicture}[scale=0.6]
    \draw[help lines] (0,0) grid (5,4);
        
    \node at (0,1.5) {\( (0,2) \)};
    \node at (5,4.4) {\( (5,4) \)};
    
    \draw [decorate,decoration={brace,mirror,amplitude=7}] (3,1.8) --node[below=3mm]{\( i=2 \)} (5,1.8);
    \draw  (0,2) -- (1,3) -- (2,4) -- (2,3) -- (3,3) -- (3,2) -- (5,4);
    \draw[very thick, red] (3,3) -- (3,2);
    
    \begin{scope}[shift={(5.5,0)}]  
      \draw (0,0) rectangle (1,4);
      \foreach \i in {1,...,4} {
        \node at (-.2,-.5+\i){{\tiny\( \i \)}};
      }
      \foreach \i in {0,...,3}{
        \draw (0,\i) -- (1,\i);
      }
      
      \node[circle,fill,inner sep=1.5pt] at (.5, 2.5){};
      \node[circle,fill,inner sep=1.5pt] at (.5, 3.5){};
      
      \draw [decorate,rotate = 90,decoration={brace,mirror,amplitude=7}] (2,-1.2) --node[right=3mm]{\( j=2 \)} (4,-1.1){};
    \end{scope}
    
    \begin{scope}[shift={(11.5,0)}]
      \draw[->] (-2.25,2.4) -- node[above]{(1-3)} (-1.25,2.4); 
      \draw[<-] (-2.25,2.2) -- node[below]{(2-3)} (-1.25,2.2);
      
      \draw[help lines] (0,0) grid (5,5);
      \node at (0,1.5) {\( (0,2) \)};
      \node at (5,5.4) {\( (5,5) \)};
      
      \draw [decorate,decoration={brace,mirror,amplitude=7}] (3,2.8) --node[below=3mm]{\( i=2 \)} (5,2.8);
      \draw  (0,2) -- (1,3) -- (2,4) -- (2,3) -- (3,3) -- (5,5);
      \node[circle, fill, color = red, inner sep=1pt] at (3,3){};
      
      \begin{scope}[shift={(5.5,0)}]  
        \draw (0,0) rectangle (1,5);
        \foreach \i in {1,...,5} {
          \node at (-.2,-.5+\i){{\tiny\( \i \)}};
        }
        \foreach \i in {1,2,4}{
          \draw (0,\i) -- (1,\i);
        }
        \draw[very thin, gray, dashed] (0,3) -- (1,3){};
        
        \node[circle,fill,inner sep=1.5pt] at (.5, 4.5){};
        \node[circle,fill,inner sep=1.5pt] at (.5, 3){};
        
        \draw [decorate,rotate = 90,decoration={brace,mirror,amplitude=7}] (4,-1.2) --node[right=3mm]{\( j=1 \)} (5,-1.1){};
      \end{scope}
    \end{scope}
  \end{tikzpicture}
  \caption{An example of \( (P,T)\in \rgm_{5,2,4}\times \dt_{4} \) for Case 1-3 and its image \( (\phi(P),\phi(T))\in\rgm_{5,2,5}\times \dt_{5}\) corresponding to Case 2-3, described in the proof of~\Cref{thm:tau_nrs}.}
  \label{fig:sign_reversing_inv_3}
\end{figure}

\begin{figure}
  \tikzset{
    set arrow inside/.code={\pgfqkeys{/tikz/arrow inside}{#1}},
    set arrow inside={end/.initial=>, opt/.initial=},
    /pgf/decoration/Mark/.style={
        mark/.expanded=at position #1 with
        {
          \noexpand\arrow[\pgfkeysvalueof{/tikz/arrow inside/opt}]{\pgfkeysvalueof{/tikz/arrow inside/end}}
        }
    },
    arrow inside/.style 2 args={
        set arrow inside={#1},
        postaction={
            decorate,decoration={
                markings,Mark/.list={#2}
            }
        }
    },
}
  \centering
  \begin{tikzpicture}[scale=0.6]
    \draw[help lines] (0,0) grid (5,4);
        
    \node at (0,1.5) {\( (0,2) \)};
    \node at (5,4.4) {\( (5,4) \)};

    \draw[thick, red] (3,2) -- (4,3);
    \draw[thick, red, arrow inside={end=stealth}{0.57}] (4,3) to[bend right=10] (3,2);

    \draw [decorate,decoration={brace,mirror,amplitude=7}] (3,1.8) --node[below=3mm]{\( i=2 \)} (5,1.8);
    \draw  (0,2) -- (1,3) -- (2,4) -- (2,3) -- (4,3) -- (5,4);
    
    \begin{scope}[shift={(5.5,0)}]  
      \draw (0,0) rectangle (1,4);
      \foreach \i in {1,...,4} {
        \node at (-.2,-.5+\i){{\tiny\( \i \)}};
      }
      \foreach \i in {0,...,3}{
        \draw (0,\i) -- (1,\i);
      }
      
      \node[circle,fill,inner sep=1.5pt] at (.5, 2.5){};
      \node[circle,fill,inner sep=1.5pt] at (.5, 3.5){};
      
      \draw [decorate,rotate = 90,decoration={brace,mirror,amplitude=7}] (2,-1.2) --node[right=3mm]{\( j=2 \)} (4,-1.1){};
    \end{scope}
    
    \begin{scope}[shift={(11.5,0)}]
      \draw[->] (-2.25,2.4) -- node[above]{(1-4)} (-1.25,2.4); 
      \draw[<-] (-2.25,2.2) -- node[below]{(2-4)} (-1.25,2.2);
      
   \draw[help lines] (0,0) grid (5,4);
        
    \node at (0,1.5) {\( (0,2) \)};
    \node at (5,4.4) {\( (5,4) \)};

    \draw [decorate,decoration={brace,mirror,amplitude=7}] (4,1.8) --node[below=3mm]{\( i=1 \)} (5,1.8);
    \draw  (0,2) -- (1,3) -- (2,4) -- (2,3) -- (4,3) -- (5,4);
    \node[circle, fill, color = red, inner sep=1pt] at (4,3){};
    
    \begin{scope}[shift={(5.5,0)}]  
      \draw (0,0) rectangle (1,4);
      \foreach \i in {1,...,4} {
        \node at (-.2,-.5+\i){{\tiny\( \i \)}};
      }
      \foreach \i in {0,...,2}{
        \draw (0,\i) -- (1,\i);
      }
      
      \draw[very thin, gray, dashed] (0,3) -- (1,3){};

      \node[circle,fill,inner sep=1.5pt] at (.5, 2.5){};
      \node[circle,fill,inner sep=1.5pt] at (.5, 3.5){};
      
     \node[right] at (1,4){\( j=0 \)};
    \end{scope}
    \end{scope}
  \end{tikzpicture}
  \caption{An example of \( (P,T)\in \rgm_{5,2,4}\times \dt_{4} \) for Case 1-4 and its image \( (\phi(P),\phi(T))\in\rgm_{5,2,4}\times \dt_{4}\) corresponding to Case 2-4, described in the proof of~\Cref{thm:tau_nrs}.}
  \label{fig:sign_reversing_inv_4}
\end{figure}

If \( c_n = 0 \) in \Cref{thm:tau_nrs}, we obtain the following
corollary.

\begin{cor}
  Let \( (P_n^I(x))_{n\ge0} \) be the sequence of orthogonal
  polynomials of type \( R_I \) defined in \eqref{eq:3rr-1} and let
  \( \tau_{n,r,s}^I \) be their dual coefficients. Then we have
  \[
   \tau^I_{n,r,s} = \sum_{p\in\rgm^I_{n,r,s}}\wt(p),
  \]
  where \( \rgm^I_{n,r,s} \) is the set of restricted \( R_{II} \)
  paths from \( (r,0) \) to \( (n,s) \) without backward-down steps.
\end{cor}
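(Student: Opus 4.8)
The plan is to derive this corollary as a direct specialization of \Cref{thm:tau_nrs}. First I would record, as already noted in the bulleted list following \Cref{def:R2-2}, that orthogonal polynomials of type \( R_I \) are exactly the \( R_{II} \) polynomials for which \( c_n = 0 \) and \( a_n \ne 0 \) for all \( n \ge 1 \). In particular the defining hypothesis of \Cref{def:R2-2}---that for each \( n \ge 1 \) at least one of \( \lambda_n \), \( a_n \), \( c_n \) is nonzero---holds automatically because \( a_n \ne 0 \), so \( (P^I_n(x))_{n\ge0} \) is a genuine sequence of \( R_{II} \) polynomials. Moreover the dual coefficients \( \tau^I_{n,r,s} \) are defined by the very same relation \eqref{eq:1} as the \( \tau_{n,r,s} \) of \Cref{thm:tau_nrs}, now with \( c_n = 0 \). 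Hence \Cref{thm:tau_nrs} applies verbatim and yields
\[
  \tau^I_{n,r,s} = \sum_{p\in\rgm_{n,r,s}}\wt(p),
\]
where the weights are those of \eqref{eq:weight_GM} specialized to \( c_n = 0 \).

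The key step is then to identify which paths survive this specialization. By \eqref{eq:weight_GM}, a backward-down step \( B \) starting at height \( i \) carries the weight \( c_i \). Setting \( c_n = 0 \) for all \( n \), every restricted \( R_{II} \) path \( p\in\rgm_{n,r,s} \) containing at least one backward-down step acquires a factor \( c_i = 0 \) in \( \wt(p) \) and therefore contributes \( 0 \) to the sum. Consequently the only nonzero contributions come from paths with no backward-down steps, which are precisely the elements of \( \rgm^I_{n,r,s} \), so that
\[
  \sum_{p\in\rgm_{n,r,s}}\wt(p) = \sum_{p\in\rgm^I_{n,r,s}}\wt(p),
\]
and the corollary follows.

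There is no substantial obstacle here; the argument is a clean specialization of the previously established sign-reversing involution, and it does not reopen that construction. The only points that warrant care are conceptual rather than computational: one should confirm that the \( R_I \) polynomials really do fall under the hypotheses of \Cref{thm:tau_nrs}, as above, and that the two families of dual coefficients coincide under \( c_n = 0 \). A minor subtlety worth a sentence is that although \( \gm_{n,r,s} \), and hence \( \rgm_{n,r,s} \), can be infinite in general owing to overlapping \( U \) and \( B \) steps, after the substitution \( c_n = 0 \) exactly the finitely many backward-down-step-free paths survive; the resulting sum is therefore a finite polynomial expression and no convergence issue arises.
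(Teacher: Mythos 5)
Your proposal is correct and follows exactly the paper's route: the paper derives this corollary simply by setting \( c_n = 0 \) in \Cref{thm:tau_nrs}, so that every restricted \( R_{II} \) path containing a backward-down step picks up a zero weight factor and only the paths in \( \rgm^I_{n,r,s} \) contribute. Your additional checks (that \( R_I \) polynomials satisfy the hypotheses of \Cref{def:R2-2} and that finiteness of the surviving sum removes any convergence concern) are sound elaborations of the same argument.
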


Recall that a Motzkin path is an \( R_{II} \) path without
vertical-down steps and backward-down steps. Thus, there is no
difference between Motzkin paths and restricted Motzkin paths.
Therefore, if \( c_n = a_n=0 \) in \Cref{thm:tau_nrs}, we obtain
\Cref{cor:mot2}.

\begin{remark}
  One may try to prove \Cref{thm:tau_nrs} by induction on \( n \).
  Once the base steps \( n=0 \) and \( n=1 \) have been proved, we can
  easily apply the inductive argument as in the case of
  \( \mu_{n,r,s} \). The case \( n=0 \) is immediate from the
  definition. However, the other case \( n=1 \) is not obvious. Note
  that, for the case of \( \mu_{n,r,s} \), both cases \( n=0 \) and
  \( n=1 \) follow from the definition of \( \LL \).
\end{remark}

\section{Convergence of the moments and dual coefficients} 
\label{subsec:conv}

In \Cref{sec:comb-interpr-gener}, we found a combinatorial description
for generalized moments \( \mu_{n,r,s} \) in terms of the weight sum
of \( R_{II} \) paths. As there are infinitely many \( R_{II} \) paths
from \( (0,r) \) to \( (n,s) \), we need to check whether the weight
sum converges. In this section, we give sufficient conditions on the
weights of steps for convergent weight sums of \( R_{II} \) paths and
restricted \( R_{II} \) paths.

Let \( C_n=\frac{1}{n+1}\binom{2n}{n} \) be the \( n \)th Catalan number.

\begin{lem}\label{lem:fixed_weight_mu}
  Suppose \( b_m=b \), \( \lambda_m=\lambda \), \( a_m=a \), and
  \( c_m=c \) for all \( m \). Then
  \[
    \mu_{n}= \sum_{i,j,k\ge0} C_{i+j+k}
    \binom{n+j+2k}{n-2i-j} \binom{i+j+k}{j} \binom{i+k}{i}
    b^{n-2i-j} \lambda^i a^j c^k.
  \]
\end{lem}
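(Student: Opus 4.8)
The plan is to begin from the combinatorial description of the moment and then sort the relevant paths by how many steps of each type they use. By \Cref{thm:mu_nrs} with $r=s=0$ (recall $\gm_n = \gm_{n,0,0}$), we have $\mu_n = \sum_{p\in\gm_n}\wt(p)$. Under the hypothesis that all weights are constant, the weight of a path $p$ depends only on its step-type counts: writing $\#U,\#H,\#D,\#V,\#B$ for the multiplicities of the five steps, we get $\wt(p)=b^{\#H}\lambda^{\#D}a^{\#V}c^{\#B}$, since up steps contribute $1$. So I would first set $(i,j,k)=(\#D,\#V,\#B)$ and observe that the two balance equations for a path from $(0,0)$ to $(n,0)$ force the remaining multiplicities: vertical balance gives $\#U=\#D+\#V+\#B=i+j+k$, and then horizontal balance $\#U+\#H+\#D-\#B=n$ gives $\#H=n-2i-j$. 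Thus each such path contributes $b^{n-2i-j}\lambda^i a^j c^k$, and it remains to count, for each $(i,j,k)$, the number $N_{i,j,k}$ of paths in $\gm_n$ with these exact multiplicities.

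To compute $N_{i,j,k}$, the key observation is that the only constraint defining $\gm_n$ is that the \emph{height} stays nonnegative, and that the height changes by $+1$ for a $U$, by $-1$ for each of $D$, $V$, $B$, and by $0$ for an $H$. I would therefore build such a path in three independent stages. First, record the height profile by reading only the $U$ steps together with the (merged) down steps $D,V,B$; since there are $i+j+k$ ups and $i+j+k$ downs and the profile must stay nonnegative and return to $0$, this is a Dyck path of semilength $i+j+k$, giving $C_{i+j+k}$ choices. Second, decide which of the $i+j+k$ down steps, in order, are of type $D$, $V$, $B$, contributing the multinomial coefficient $\binom{i+j+k}{i,j,k}=\binom{i+j+k}{j}\binom{i+k}{i}$. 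Third, interleave the $\#H=n-2i-j$ horizontal steps into the word of length $2(i+j+k)$ already produced; since an $H$ leaves the height unchanged, \emph{every} interleaving keeps the path nonnegative, so this stage merely chooses the positions of the $H$'s, giving $\binom{2(i+j+k)+(n-2i-j)}{n-2i-j}=\binom{n+j+2k}{n-2i-j}$ choices.

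Multiplying the three factors yields $N_{i,j,k}=C_{i+j+k}\binom{n+j+2k}{n-2i-j}\binom{i+j+k}{j}\binom{i+k}{i}$, and summing $N_{i,j,k}\,b^{n-2i-j}\lambda^i a^j c^k$ over all $i,j,k\ge0$ gives the claimed formula; terms with $2i+j>n$ contribute nothing because $\binom{n+j+2k}{n-2i-j}$ vanishes, exactly the range where $\#H$ would be negative. The main point to pin down carefully is the claim that $N_{i,j,k}$ factors as a product over these three stages, i.e. that the defining constraint of $\gm_n$ depends \emph{only} on the up/down height profile and is entirely insensitive to the horizontal (and in particular backward) motion; once this is established the three stages are manifestly independent and the count is routine. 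Beyond that, I expect the only bookkeeping to be the two elementary identities $2(i+j+k)+(n-2i-j)=n+j+2k$ and $\binom{i+j+k}{j}\binom{i+k}{i}=\binom{i+j+k}{i,j,k}$, both immediate.
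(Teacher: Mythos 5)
Your proposal is correct and follows essentially the same route as the paper: the paper also reduces to counting paths with prescribed step multiplicities via the two balance equations, and then obtains the count $C_{i+j+k}\binom{n+j+2k}{n-2i-j}\binom{i+j+k}{j}\binom{i+k}{i}$ by the same three-stage decomposition (Dyck profile, typing of the down steps, insertion of horizontal steps), phrased there as a map to labeled Motzkin paths. The one point you flag as needing care --- that the nonnegativity constraint depends only on the up/down height profile --- is indeed the crux, and it holds because $\gm_n$ is defined by $p_i\in\ZZ\times\ZZ_{\ge0}$ with no condition on the $x$-coordinate.
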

\begin{proof}
  Suppose that \( p\in \gm_n \) has \( u \) up-steps, \( h \)
  horizontal steps, \( i \) down steps, \( j \) vertical-down steps,
  and \( k \) backward-down steps. Then
  \[
    (n,0) = u(1,1) + h(1,0) + i(1,-1) +j(0,-1) + k(-1,-1),
  \]
  which implies \( u=i+j+k \) and \( h=n-2i-j \). Therefore
  \begin{equation}\label{eq:11}
    \mu_{n}= \sum_{p\in \gm_n} \wt(p)
    = \sum_{i,j,k\ge0} f_n(i,j,k) b^{n-2i-j} \lambda^i a^j c^k,
  \end{equation}
  where \( f_n(i,j,k) \) is the number of paths \( p\in \gm_n \) with
  \( i \) down steps, \( j \) vertical-down steps, and \( k \)
  backward-down steps.

  Given a path \( p\in \gm_n \), define \( \phi(p) \) to be the
  labeled Motzkin path obtained from \( p \) by replacing every
  vertical-down step with a down step labeled \( a \), every backward-down
  step with a down step labeled \( c \). If \( p \) has the given
  number of steps of each type as above, then \( \phi(p) \) is a
  labeled Motzkin path from \( (0,0) \) to \( (n+j+2k) \) with
  \( n-2i-j \) horizontal steps, \( i \) (unlabeled) down steps,
  \( j \) down steps labeled \( a \), and \( k \) down steps labeled
  \( c \). Such a labeled Motzkin path can be obtained from a Dyck
  path from \( (0,0) \) to \( (2i+2j+2k) \) by inserting \( n-2i-j \)
  horizontal steps in \( \binom{n+j+2k}{n-2i-j} \) ways, labeling
  \( j \) down steps by \( a \) in \( \binom{i+j+k}{j} \) ways, and
  labeling \( k \) down steps by \( c \) in
  \( \binom{i+k}{k} = \binom{i+k}{i} \) ways. This implies that
  \begin{equation}\label{eq:12}
    f_n(i,j,k) = C_{i+j+k} \binom{n+j+2k}{n-2i-j}
    \binom{i+j+k}{j} \binom{i+k}{i}.
  \end{equation}
  By \eqref{eq:11} and \eqref{eq:12}, we obtain the desired formula.
\end{proof}
\begin{lem}\label{lem:ratio_test}
  Suppose \( b_m=b \), \( \lambda_m=\lambda \), \( a_m=a \), and
  \( c_m=c \) for all \( m \) and \( |c|<1/4 \). Then \( \mu_n \)
  converges absolutely.
\end{lem}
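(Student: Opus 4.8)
The plan is to prove absolute convergence of $\mu_n$ by applying the ratio test to the explicit triple sum obtained in \Cref{lem:fixed_weight_mu}. Writing
\[
  \mu_n = \sum_{i,j,k\ge0} t_n(i,j,k), \qquad
  t_n(i,j,k) = C_{i+j+k}\binom{n+j+2k}{n-2i-j}\binom{i+j+k}{j}\binom{i+k}{i}
  b^{n-2i-j}\lambda^i a^j c^k,
\]
I would reindex the sum so that the single summation variable $k$ (the number of backward-down steps) controls the tail, since $k$ is the only index that can grow without bound once $n$ is fixed: indeed $i \le \lfloor n/2\rfloor$ and $j \le n$ are bounded, while $k$ ranges over all of $\ZZ_{\ge0}$. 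The key point is that each term carries a factor $c^k$, and $|c| < 1/4$ is exactly the threshold that tames the Catalan growth. First I would fix $i$ and $j$ and estimate the growth of the coefficient of $c^k$ as $k\to\infty$.

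For the main estimate I would bound the $k$-dependent part of $|t_n(i,j,k)|$. The Catalan number satisfies $C_m \le 4^m$, and with $m = i+j+k$ this gives a factor $4^k$ (times a constant depending on $i,j$); the binomial coefficients $\binom{n+j+2k}{n-2i-j}$ and $\binom{i+k}{i}$ are polynomial in $k$ of fixed degree (namely degrees $n-2i-j$ and $i$ respectively, since the lower indices are held fixed), while $\binom{i+j+k}{j}$ is likewise polynomial in $k$. Hence for fixed $i,j$ there is a constant $K_{i,j}$ and an integer $d_{i,j}$ with
\[
  |t_n(i,j,k)| \le K_{i,j}\, k^{d_{i,j}} (4|c|)^k.
\]
Since $4|c| < 1$, the geometric decay dominates the polynomial factor, so $\sum_{k\ge0} |t_n(i,j,k)|$ converges for each fixed $i,j$. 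Because the outer sums over $i$ and $j$ are finite (ranging over $0\le i\le\lfloor n/2\rfloor$ and $0\le j\le n-2i$), summing these finitely many convergent series shows $\sum_{i,j,k}|t_n(i,j,k)| < \infty$, which is precisely absolute convergence of $\mu_n$.

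The main obstacle I anticipate is making the polynomial-in-$k$ bound on the binomial coefficients clean and explicit, rather than merely asymptotic, so that the ratio test (or equivalently the root test via $\limsup_k |t_n(i,j,k)|^{1/k} = 4|c| < 1$) applies rigorously and uniformly in the finitely many pairs $(i,j)$. A convenient way to sidestep case-by-case polynomial degree bookkeeping is to use the root test directly: since $C_m^{1/m}\to 4$ and every binomial coefficient with fixed lower index grows subexponentially in $k$, one gets $\limsup_{k\to\infty} |t_n(i,j,k)|^{1/k} = 4|c|$, and the hypothesis $|c|<1/4$ forces this limit below $1$. I would therefore favor the root test, as it avoids tracking the exact polynomial degrees $d_{i,j}$ and handles all the binomial factors uniformly. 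The only care needed is to confirm that the prefactors $C_{i+j+k}$ and the three binomials each contribute factor $1$ to the $k$th root in the limit except for the Catalan number, which contributes $4$, and that the monomial $c^k$ contributes $|c|$; combining these yields the threshold $4|c|<1$.
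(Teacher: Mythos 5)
Your proposal is correct and follows essentially the same route as the paper: both reduce to the triple-sum formula of \Cref{lem:fixed_weight_mu}, observe that only finitely many pairs $(i,j)$ occur since $2i+j\le n$, and then show that for fixed $i,j$ the tail in $k$ behaves like $(4|c|)^k$ times a subexponential factor. The paper uses Stirling asymptotics and the ratio test where you use the bound $C_m\le 4^m$ (or the root test), but this is only a cosmetic difference.
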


\begin{proof}
  By \Cref{lem:fixed_weight_mu}, we have
  \( \mu_n=\sum_{i,j,k\ge0} f_n(i,j,k) b^{n-2i-j} \lambda^i a^j c^k \), where \( f_n(i,j,k) \)
  is given in \eqref{eq:12}. Since \( f_n(i,j,k) = 0 \) unless
  \( 2i+j\le n \), there are finitely many pairs \( (i,j) \) in this
  sum. Thus it suffices to show that,
  for fixed \( i \) and \( j \) with \( 2i+j\le n \), the sum
  \begin{equation}\label{eq:13}
    \sum_{k\ge0} f_n(i,j,k) c^k
  \end{equation}
  converges.

  By Stirling's formula, for fixed \( m \) and large \( N \), we
  have the asymptotic behaviors
\[
C_N \sim \frac{4^N}{N^{3/2} \sqrt{\pi}}, \qquad 
\binom{N}{m} \sim \frac{N^m}{m!}.
\]
Thus, for fixed \( i,j,n \) and large \( k \),
\begin{align*}
  f_n(i,j,k) c^k
  &\sim \frac{4^{i+j+k}}{(i+j+k)^{3/2} \sqrt{\pi}}
    \cdot \frac{(n+j+2k)^{n-2i-j}}{(n-2i-j)!}
    \cdot \frac{(i+j+k)^j}{j!}
    \cdot \frac{(i+k)^i}{i!} c^k\\
  &\sim  D\cdot (4c)^k k^{n-i-3/2},
\end{align*}
where \( D \) is a constant with respect to \( k \). Since
\( |4c|<1 \), by the ratio test, \eqref{eq:13} converges.
\end{proof}

\begin{thm}\label{thm:convergence}
  Let \( (a_m)_{m\ge0},(b_m)_{m\ge0}, (c_m)_{m\ge0} \) and \( (\lambda_m)_{m\ge0} \) be bounded sequences of complex numbers. Suppose 
  \( |c_m|<\frac{1}{4}-\epsilon \) for all \( m \), where
  \( \epsilon>0 \) is a fixed number. Then the generalized moments
  \( \mu_{n,r,s} \) converge absolutely.
\end{thm}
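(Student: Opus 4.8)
The plan is to dominate $\mu_{n,r,s}$ termwise by a convergent series with constant, nonnegative weights, thereby reducing to an asymptotic estimate of exactly the kind already carried out in \Cref{lem:ratio_test}. First I would fix a bound $M\ge 1$ with $|a_m|,|b_m|,|\lambda_m|\le M$ for all $m$ (possible since these sequences are bounded) and set $c_0:=\tfrac14-\epsilon$, so that $|c_m|<c_0<\tfrac14$ for all $m$. By \Cref{thm:mu_nrs} we have $\mu_{n,r,s}=\sum_{p\in\gm_{n,r,s}}\wt(p)$, and since the weight of a path is the product of its step weights, the triangle inequality gives
\[
  \sum_{p\in\gm_{n,r,s}}|\wt(p)|\le \sum_{p\in\gm_{n,r,s}}\prod_{t}|\wt(s_t)|,
\]
where each $U$ contributes $1$, each $H$, $D$, $V$ contributes at most $M$, and each $B$ contributes at most $c_0$.

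Next I would classify paths by their step counts. If $p\in\gm_{n,r,s}$ has $i$ down steps, $j$ vertical-down steps, and $k$ backward-down steps, then the displacement equations $u-i-j-k=s-r$ and $u+h+i-k=n$ (where $u,h$ count up and horizontal steps) force $u=i+j+k+(s-r)$ and $h=n+r-s-2i-j$. In particular $\prod_t|\wt(s_t)|\le M^{h+i+j}c_0^{\,k}$, and the requirement $h\ge0$ restricts $(i,j)$ to the finite set with $2i+j\le n+r-s$. The key combinatorial observation is that the final position of a lattice path depends only on its multiset of steps, not on their order; hence every ordering of the fixed multiset $\{U^u,H^h,D^i,V^j,B^k\}$ runs from $(0,r)$ to $(n,s)$, and the number of paths in $\gm_{n,r,s}$ with the above step counts is at most the multinomial coefficient $\binom{u+h+i+j+k}{u,\,h,\,i,\,j,\,k}$ (imposing the height condition $y\ge0$ can only remove orderings). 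Combining these bounds yields
\[
  \sum_{p\in\gm_{n,r,s}}|\wt(p)|\ \le\ \sum_{\substack{i,j\ge0\\ 2i+j\le n+r-s}}\ \sum_{k\ge0}\binom{u+h+i+j+k}{u,\,h,\,i,\,j,\,k}\,M^{h+i+j}c_0^{\,k}.
\]

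Finally I would prove that the inner sum over $k$ converges for each fixed $(i,j)$; since the outer sum is finite, this gives absolute convergence of $\mu_{n,r,s}$. For fixed $i,j,n,r,s$ we have $u=k+O(1)$ and total length $u+h+i+j+k=2k+O(1)$ as $k\to\infty$, so Stirling's formula (applied just as in the proofs of \Cref{lem:fixed_weight_mu} and \Cref{lem:ratio_test}) gives
\[
  \binom{u+h+i+j+k}{u,\,h,\,i,\,j,\,k}\,M^{h+i+j}c_0^{\,k}\ \sim\ D\cdot(4c_0)^{k}\,k^{\beta}
\]
for constants $D,\beta$ independent of $k$. Since $4c_0=1-4\epsilon<1$, the ratio test shows the $k$-sum converges. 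I expect the main obstacle to be the combinatorial bound in the middle step, namely the clean justification that all orderings of a fixed step multiset end at the same point and therefore the multinomial coefficient dominates the number of paths in $\gm_{n,r,s}$ of each step type; once this is in place, the analytic estimate is essentially a transcription of the argument in \Cref{lem:ratio_test}.
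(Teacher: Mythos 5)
Your proof is correct, but it takes a genuinely different route from the paper's. The paper reduces the general case to the case $r=s=0$ by an injection $\gm_{n,r,s}\hookrightarrow\gm_{n+r+s}$: prepend $r$ up steps and append $s$ down steps to each path, note that (after majorizing all coefficients by constants, with the $\lambda$-majorant taken to be at least $1$ so the appended down steps do not decrease the absolute weight) this only increases $|\wt(p)|$, and then invoke \Cref{lem:ratio_test} verbatim. You instead work directly on $\gm_{n,r,s}$: you solve the displacement equations for general endpoints, replace the exact count $f_n(i,j,k)$ of \Cref{lem:fixed_weight_mu} by the multinomial upper bound $\binom{u+h+i+j+k}{u,h,i,j,k}$ (correctly noting that the height constraint only removes orderings), and rerun the Stirling estimate to get the summable majorant $D\,(4c_0)^k k^\beta$. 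Both arguments ultimately rest on the same analytic fact that $\sum_k 4^k k^\beta c_0^k$ converges for $4c_0<1$; the paper's reduction is shorter because it reuses the constant-coefficient lemma as a black box, at the cost of the small trick with the appended steps, while your version is more self-contained and makes transparent exactly which combinatorial quantity is being dominated. Your displacement equations $u=i+j+k+(s-r)$, $h=n+r-s-2i-j$ are correct, the finiteness of the outer sum over $(i,j)$ is justified, and the asymptotics of the multinomial coefficient (total length $2k+O(1)$, numerator $\sim 4^k$ up to polynomial factors) go through as claimed, so no gap remains.
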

\begin{proof}
  Suppose \( |a_m|<a, |b_m|<b, |\lambda_m|<r \) for all \( m \). Let \( r= \max(\lambda,1) \) and \( c = 1/4-\epsilon \). 
    For \( p\in \gm_{n,r,s} \), let \( p' \) be
  the \( R_{II} \) path obtained from \( p \) by adding \( r \) up
  steps at the beginning and \( s \) down steps at the end. Then
  \( p'\in \gm_{n+r+s} \) and
  \( |\wt(p)| = |\lambda^{-s}\wt(p')| \le |\wt(p')| \). Thus
  \[
    \sum_{p\in \gm_{n,r,s}} |\wt(p)| \le \sum_{p\in \gm_{n+r+s}} |\wt(p)|
    \le \sum_{p\in \gm_{n+r+s}} |\widehat{\wt}(p)|,
  \]
  where \( \widehat{\wt}(p) \) is the weight \( \wt(p) \) with
  substitution \( (b_i,\lambda_i,a_i,c_i)\mapsto (b,r,a,c) \)
  for all \( i \). By \Cref{lem:ratio_test}, the rightmost side in the
  above inequalities converges, which implies that \( \mu_{n,r,s} \)
  converges absolutely.
\end{proof}

For the convergence of \( \tau_{n,r,s} \), we only need the condition
on \( \vc \).

\begin{cor}\label{cor:conv_tau_nrs}
  Suppose that \( |c_m|<\frac{1}{4}-\epsilon \) for all \( m \), where
  \( \epsilon>0 \) is a fixed number. Then the dual coefficients
  \( \tau_{n,r,s} \) converge absolutely.
\end{cor}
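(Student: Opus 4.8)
The plan is to deduce the corollary from \Cref{thm:convergence} together with the combinatorial description of the dual coefficients in \Cref{thm:tau_nrs}. By \Cref{thm:tau_nrs} we have \( \tau_{n,r,s} = \sum_{p\in\rgm_{n,r,s}}\wt(p) \), so it suffices to show that \( \sum_{p\in\rgm_{n,r,s}}|\wt(p)| < \infty \). The apparent difficulty is that \Cref{thm:convergence} requires all four sequences \( \va,\vb,\vc,\vl \) to be bounded, whereas here only the condition on \( \vc \) is assumed. The key observation that bridges this gap is that a restricted \( R_{II} \) path from \( (0,r) \) to \( (n,s) \) visits only boundedly many heights, so only finitely many of the weights \( a_i,b_i,\lambda_i \) can ever appear in such a path.

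First I would establish this height bound. For a step with displacement \( (\Delta x,\Delta y) \), the quantity \( y-x \) changes by \( \Delta y-\Delta x \), which equals \( 0 \) for \( U \) and \( B \), \( -1 \) for \( H \) and \( V \), and \( -2 \) for \( D \). Hence along any \( R_{II} \) path the value of \( y-x \) is non-increasing; it starts at \( r \) and ends at \( s-n \), so in particular \( y-x \le r \) at every point. By the restricted condition the \( x \)-coordinate never exceeds \( n \) (each step changes \( x \) by at most one, and the path terminates the instant it reaches \( x=n \)), so every point of \( p\in\rgm_{n,r,s} \) satisfies \( y=(y-x)+x \le r+n \). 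Therefore every step of every path in \( \rgm_{n,r,s} \) starts at a height in \( \{0,1,\dots,n+r\} \), and consequently \( \wt(p) \) involves only the weights indexed by \( 0\le i\le n+r \).

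Next I would use this bound to replace the possibly unbounded sequences by bounded ones without changing the sum. Since \( |c_m|<\frac14-\epsilon \), the sequence \( \vc \) is already bounded; the sequences \( \va,\vb,\vl \) need not be. Define \( \hat\va,\hat\vb,\hat\vl \) to agree with \( \va,\vb,\vl \) for indices \( 0\le i\le n+r \) and to vanish for \( i>n+r \); these three sequences are bounded, and together with \( \vc \) they satisfy \( |c_m|<\frac14-\epsilon \) for all \( m \). Writing \( \widehat{\wt} \) for the weight computed with \( \hat\va,\hat\vb,\vc,\hat\vl \), the height bound of the previous step shows that \( \wt(p)=\widehat{\wt}(p) \) for every \( p\in\rgm_{n,r,s} \), because such \( p \) uses only weights indexed by \( 0\le i\le n+r \).

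Finally I would compare against the unrestricted sum. Using \( \rgm_{n,r,s}\subseteq\gm_{n,r,s} \),
\[
  \sum_{p\in\rgm_{n,r,s}}|\wt(p)|
  = \sum_{p\in\rgm_{n,r,s}}|\widehat{\wt}(p)|
  \le \sum_{p\in\gm_{n,r,s}}|\widehat{\wt}(p)|,
\]
and the right-hand side converges by \Cref{thm:convergence} applied to the bounded sequences \( \hat\va,\hat\vb,\vc,\hat\vl \), which satisfy \( |c_m|<\frac14-\epsilon \). This shows that \( \tau_{n,r,s} \) converges absolutely. I expect the height bound in the second paragraph to be the only substantive point; everything after it is a routine reduction to \Cref{thm:convergence}. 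The one place that warrants care is confirming that the restricted condition genuinely forces \( x\le n \) throughout, since that is exactly what makes the monovariant \( y-x\le r \) convert into an upper bound on the height.
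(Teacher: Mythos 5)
Your proof is correct and follows essentially the same route as the paper: bound the height of every restricted path by \( n+r \), truncate the recurrence coefficients beyond that index so that the sequences become bounded, and invoke \Cref{thm:convergence} on the dominating unrestricted sum. Your explicit monovariant argument (that \( y-x \) is non-increasing, combined with \( x\le n \) from the restricted condition) supplies a justification for the height bound that the paper merely asserts, which is a welcome extra detail.
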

\begin{proof}
  Recall that \( \tau_{n,r,s} = \sum_{p\in \rgm_{n,r,s}}\wt(p) \).
  Because of the restricted condition, each \( p\in \rgm_{n,r,s} \)
  cannot go over the line \( y = n+r \). Thus
  \begin{equation}\label{eq:19}
    \sum_{p\in \rgm_{n,r,s}}|\wt(p)| \le \sum_{p\in \gm_{n,r,s}}|\wt'(p)|,
  \end{equation}
  where \( \wt'(p) \) is the weight of \( p \) using the sequences
  \( \va',\vb',\vc' \), and \( \vl' \) obtained from
  \( \va,\vb,\vc \), and \( \vl \) by substituting
  \( b_i = a_i = \lambda_i = c_i =0 \) for \( i > n+r \). Then,
  by~\Cref{thm:convergence}, we see that the right-hand side of
  \eqref{eq:19} converges.
\end{proof}

\section{Moments with constant recurrence coefficients}
\label{sec:moments-with-const}

In this section, we assume that \( a_m=a \), \( b_m=b \),
\( c_m = c \), and \( \lambda_m=\lambda \), for all \( m \). The goal
is to find a formula for \( \mu_n \) in this case.

Let
\[
  \mu(x)= \sum_{n\ge0} \mu_n x^n.
\]
Considering the first time each \( R_{II} \) path returns to the
\( x \)-axis as shown in \Cref{fig:visualization_mu_x}, we obtain the
following functional equation:
\[
  \mu(x)=1+bx\mu(x)+(c+ax+\lambda x^2)\mu(x)^2.
\]
\begin{figure}
  \centering
    \begin{tikzpicture}
    \begin{scope}[scale=0.38]
      \node at (-4,0) {\( \mu(x) = \)};
      
      \node at (-2,0) [circle, fill,inner sep=.5pt]{};
      \node at (-2,1) {\( 1 \)};
      
      \node at (-1,0) {\( + \)};
      
      \draw (0,0) -- node [above] {\( bx \)} (1,0);
      \draw (1,0) arc (180:0:1.5) -- cycle;
      \node at (2.5,0.6) {\( \mu(x) \)};

      \node at (5,0) {\( + \)};
      
      \draw (6,0) -- node [text width=0.3cm,above ] {\( x \)}  (7,1);
      \draw (7,1) arc (180:0:1.5) -- cycle;
      \node at (8.5,1.6) {\( \mu(x) \)};
      \draw (10,1) --  node [ left ] {\( \frac{c}{x} \)}  (9,0);
      \node at (11.5,1.3) {\( \mu(x) \)};
      \draw (9,0) arc (130:50:3) -- cycle;
      
    \end{scope}
    \begin{scope}[scale=0.38, xshift=9cm]
      \node at (5,0) {\( + \)};
      
      \draw (6,0) -- node [text width=0.3cm,above ] {\( x \)}  (7,1);
      \draw (7,1) arc (180:0:1.5) -- cycle;
      \node at (8.5,1.6) {\( \mu(x) \)};
      \draw (10,1) --  node [ left ] {\( a \)}  (10,0);
      \node at (11.5,0.6) {\( \mu(x) \)};
      \draw (10,0) arc (180:0:1.5) -- cycle;
    \end{scope}
\begin{scope}[scale=0.38, xshift=18cm]
      \node at (5,0) {\( + \)};
      
      \draw (6,0) -- node [text width=0.3cm,above ] {\( x \)}  (7,1);
      \draw (7,1) arc (180:0:1.5) -- cycle;
      \node at (8.5,1.6) {\( \mu(x) \)};
      \draw (10,1) --  node [ left ] {\( \lambda x \)}  (11,0);
      \node at (12.5,0.6) {\( \mu(x) \)};
      \draw (11,0) arc (180:0:1.5) -- cycle;
    \end{scope}

  \end{tikzpicture}
  \caption{A visualization of the functional equation for \( \mu(x) \).} 
  \label{fig:visualization_mu_x}
\end{figure}
Solving the equation yields
\[
  \mu(x)=\frac{1-bx-\sqrt{(1-4c)-(4a+2b)x-(4\lambda-b^2)x^2}}{2(c+ax+\lambda x^2)}.
\]

We aim to show that (up to a constant) \( \mu_n \) can be expressed as
the moment of certain classical or type \( R_I \) orthogonal
polynomials. Recall that the moments of classical orthogonal
polynomials (resp.~orthogonal polynomials of type \( R_i \)) are equal
to the weight sums of certain Motzkin paths (resp.~Motzkin--Schr\"oder
paths). Thus, the goal is to reinterpret the values \( h_n \) in the
framework of Motzkin or Motzkin--Schr\"oder path models. While there
are infinitely many \( R_{II} \) paths from \( (0,0) \) to
\( (n,0) \), the number of such Motzkin or Motzkin--Schr\"{o}der paths
is finite. In this point of view, our results give useful ways of
calculating \( \mu_n \). Note that this section generalizes the result
on moments of \( R_I \) polynomials in \cite[Section
6.2]{kimstanton:R1}.

Let \( \mathcal{C} \) be the generating function for the Catalan numbers
\( C_n \):
\[
  {\mathcal C} = \sum_{n\ge0} C_n c^n = \frac{1-\sqrt{1-4c}}{2c}.
\]

\begin{prop}\label{prop:R2_Classical}
  Suppose that \( a_m=a \), \( b_m=b \), \( c_m = c \), and
  \( \lambda_m=\lambda \), for all \( m \). We have
  \[
    \mu_n  = {\mathcal C} \cdot \mu_n(\vec B,\vec \Lambda).
  \]
  Here, \( \mu_n(\vec B,\vec \Lambda) \) is the \( n \)th moment for
  classical orthogonal polynomials given by
\[ 
 p_{n+1}(x)=(x-B_n)p_n(x)-\Lambda_n p_{n-1}(x), n\ge0, 
\]
where \( p_{-1}(x)=0,p_0(x)=1 \), and
\begin{align*} 
  B_0&=\frac{1-\sqrt{1-4c}}{2c\sqrt{1-4c}}a+\frac{b}{\sqrt{1-4c}}, & \Lambda_1&=\frac{(b^2c+a^2+ab-4c\lambda+\lambda)(1-\sqrt{1-4c})}{2c\sqrt{1-4c}^3}, \\
  B_n&=\frac{2a+b}{1-4c}, & \Lambda_{n+1}&=\frac{b^2c+a^2+ab-4c\lambda+\lambda}{(1-4c)^2}, \qquad n\ge1.
\end{align*}
\end{prop}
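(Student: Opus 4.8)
The plan is to establish the identity at the level of generating functions, by showing that $\mathcal{C}^{-1}\mu(x)$ equals the moment generating function $M(x):=\sum_{n\ge0}\mu_n(\vec B,\vec\Lambda)\,x^n$ of the stated classical orthogonal polynomials. The closed form of $\mu(x)$ was derived just above, so the task reduces to computing $M(x)$ explicitly. By \Cref{thm:mot} (with $\vc=\va=\vz$), $\mu_n(\vec B,\vec\Lambda)$ is the weighted sum of Motzkin paths from $(0,0)$ to $(n,0)$ in which a horizontal step at height $i$ has weight $B_i$ and a down step from height $i$ has weight $\Lambda_i$; hence $M(x)$ admits the usual Jacobi continued fraction.

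First I would isolate the eventually constant tail. Set $s=\sqrt{1-4c}$, $\tilde B=\frac{2a+b}{1-4c}$, and $\tilde\Lambda=\frac{b^2c+a^2+ab-4c\lambda+\lambda}{(1-4c)^2}$, the common values of $B_n$ ($n\ge1$) and $\Lambda_n$ ($n\ge2$). Let $G(x)$ be the generating function of Motzkin paths all of whose weights are $\tilde B$ and $\tilde\Lambda$. A first-return decomposition gives $G=1+\tilde Bx\,G+\tilde\Lambda x^2G^2$, and decomposing an arbitrary ground-level path according to its returns to height $0$ — horizontal steps contributing $B_0x$ and excursions above height $0$ contributing $\Lambda_1 x^2 G(x)$ — yields
\[
  M(x)=\frac{1}{1-B_0x-\Lambda_1 x^2 G(x)}.
\]
Choosing the branch with $G(0)=1$ gives $G=\dfrac{(1-\tilde Bx)-\sqrt{(1-\tilde Bx)^2-4\tilde\Lambda x^2}}{2\tilde\Lambda x^2}$.

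The crux is a discriminant identity. Writing $\Delta(x)=(1-4c)-(4a+2b)x-(4\lambda-b^2)x^2$ for the radicand of $\mu(x)$, a direct expansion — using that the numerator $(2a+b)^2-4(b^2c+a^2+ab-4c\lambda+\lambda)$ of the $x^2$-coefficient factors as $(1-4c)(b^2-4\lambda)$ — shows
\[
  (1-\tilde Bx)^2-4\tilde\Lambda x^2=\frac{\Delta(x)}{1-4c}.
\]
Thus the nested radical in $G(x)$ collapses to $\sqrt{\Delta(x)}/s$, the single radical occurring in $\mu(x)$, so that $\mathcal{C}\,M(x)$ and $\mu(x)$ live in the same quadratic extension.

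It then remains to substitute $B_0$, $\Lambda_1$ and to simplify $\mathcal{C}\,M(x)$. One finds $\Lambda_1/(2\tilde\Lambda)=s(1-s)/(4c)$, so the denominator $1-B_0x-\Lambda_1 x^2 G(x)$ takes the form $A(x)+\frac{1-s}{4c}\sqrt{\Delta(x)}$ for an explicit linear polynomial $A(x)$. Clearing the radical against the factor $1-bx-\sqrt{\Delta(x)}$ in $\mu(x)$, the claim $\mathcal{C}\,M(x)=\mu(x)$ splits into the two polynomial identities $cA(x)=\tfrac{1-s}{4}(1-bx)$ and $(1-bx)^2-\Delta(x)=4(c+ax+\lambda x^2)$; the former is checked using $s^2=1-4c$ and $4c=(1-s)(1+s)$, and the latter is immediate. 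I expect the only real work to be this final bookkeeping with the two radicals $s$ and $\sqrt{\Delta(x)}$; the discriminant identity above is precisely what makes the radicals match and the computation collapse, so there is no conceptual obstacle beyond it.
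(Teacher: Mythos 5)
Your proposal is correct and follows essentially the same route as the paper: express $\sum_n \mu_n(\vec B,\vec\Lambda)x^n$ as a Jacobi continued fraction, use the eventual constancy of $B_n$ and $\Lambda_n$ to collapse the tail into a single quadratic functional equation, and then verify by direct algebra that $\mathcal{C}$ times the result equals the closed form of $\mu(x)$. The discriminant identity $(1-\tilde Bx)^2-4\tilde\Lambda x^2=\Delta(x)/(1-4c)$ that you isolate is exactly the computation the paper leaves implicit in its final display, and your verification of it is correct.
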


\begin{proof}
  By Flajolet's theory on continued fractions \cite{Flajolet1980},
  \begin{equation}\label{eq:20}
    \sum_{n\ge0} \mu_n(\vec B,\vec \Lambda)x^n  = \cfrac{1}{1-B_0x- \Lambda_1x^2\cdot g(x)},
  \end{equation}
  where
  \[
    g(x) = \cfrac{1}{1-B_1x-\cfrac{\Lambda_2x^2}{1-B_2x-\cdots}}.
  \]
  Since \( B_1 = B_2 = \cdots \) and \( \Lambda_2=\Lambda_3=\cdots \), we see that
  \[
    g(x) = \cfrac{1}{1-B_1x-\Lambda_2x^2\cdot g(x)}.
  \]
  Solving this function equation gives
\begin{equation}\label{eq:21}
    g(x) = \frac{2}{1-B_1x-\sqrt{1-2B_1x+(B_1^2-4\Lambda_2)x^2}}.
  \end{equation}
By \eqref{eq:20} and \eqref{eq:21},
\begin{align*}
  \mathcal{C} \sum_{n\ge0} \mu_n(\vec B,\vec \Lambda)x^n 
  &= \cfrac{\mathcal{C}}{1-B_0x-\cfrac{2\Lambda_1x^2}{1-B_1x-\sqrt{1-2B_1x+(B_1^2-4\Lambda_2)x^2}}} \\
  &= \cfrac{2}{1 - bx + \sqrt{(1-4c)-(4a+2b)x-(4\lambda-b^2)x^2}}\\
  &= \mu(x),
\end{align*}
as desired.
\end{proof}

As a corollary, we obtain a formula for the Hankel determinant \( \det(\mu_{i+j})_{i,j=0}^n \).

\begin{cor}\label{cor:Hankel}
  Suppose that \( a_m=a \), \( b_m=b \), \( c_m = c \), and
  \( \lambda_m=\lambda \), for all \( m \). Then, we have
  \[
    \det(\mu_{i+j})_{i,j=0}^n= \frac{(b^2c+a^2+ab-4c\lambda+\lambda)^{\binom{n+1}{2}}\left(\frac{1-\sqrt{1-4c}}{2c}\right)^{2n+1}}{\sqrt{1-4c}^{\binom{2n+1}{2}}}.
  \]
\end{cor}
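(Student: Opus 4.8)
The plan is to combine \Cref{prop:R2_Classical} with the classical product formula for Hankel determinants of orthogonal-polynomial moments. Recall the standard fact that if \( (\nu_n)_{n\ge0} \) is the moment sequence of a monic orthogonal polynomial sequence satisfying \( p_{n+1}(x)=(x-B_n)p_n(x)-\Lambda_n p_{n-1}(x) \), then
\[
  \det(\nu_{i+j})_{i,j=0}^n = \nu_0^{\,n+1}\prod_{i=1}^n \Lambda_i^{\,n+1-i},
\]
which follows from \( \det(\nu_{i+j})_{i,j=0}^n=\prod_{k=0}^n \LL(p_k^2) \) together with \( \LL(p_k^2)=\nu_0\,\Lambda_1\cdots\Lambda_k \) (the convention \( \LL(p_n^2)=\lambda_1\cdots\lambda_n \) already used in the excerpt, with \( \nu_0=1 \)). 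I would invoke this for the classical orthogonal polynomials appearing in \Cref{prop:R2_Classical}, whose moments \( \mu_n(\vec B,\vec\Lambda) \) satisfy \( \mu_0(\vec B,\vec\Lambda)=1 \).

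By \Cref{prop:R2_Classical} we have \( \mu_n = \mathcal{C}\cdot\mu_n(\vec B,\vec\Lambda) \) for all \( n \). Since scaling every entry of the \( (n+1)\times(n+1) \) Hankel matrix by the constant \( \mathcal{C} \) multiplies its determinant by \( \mathcal{C}^{n+1} \), the first step gives
\[
  \det(\mu_{i+j})_{i,j=0}^n
  = \mathcal{C}^{\,n+1}\det\bigl(\mu_{i+j}(\vec B,\vec\Lambda)\bigr)_{i,j=0}^n
  = \mathcal{C}^{\,n+1}\prod_{i=1}^n \Lambda_i^{\,n+1-i},
\]
using the displayed formula and \( \mu_0(\vec B,\vec\Lambda)=1 \).

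It then remains to substitute the explicit values of the \( \Lambda_i \) from \Cref{prop:R2_Classical} and carry out the exponent bookkeeping. Writing \( K=b^2c+a^2+ab-4c\lambda+\lambda \), we have \( \Lambda_1 = K\cdot\frac{1-\sqrt{1-4c}}{2c\sqrt{1-4c}^{\,3}} \) and \( \Lambda_i=\frac{K}{(1-4c)^2} \) for \( i\ge2 \), so that
\[
  \prod_{i=1}^n \Lambda_i^{\,n+1-i}
  = \Lambda_1^{\,n}\prod_{i=2}^n \Lambda_i^{\,n+1-i},
  \qquad
  \sum_{i=2}^n (n+1-i)=\binom{n}{2}.
\]
The total power of \( K \) is \( n+\binom{n}{2}=\binom{n+1}{2} \); the factor \( \frac{1-\sqrt{1-4c}}{2c} \) (which is exactly \( \mathcal{C} \)) occurs to the power \( (n+1)+n=2n+1 \) once one merges \( \mathcal{C}^{n+1} \) with the single such factor inside \( \Lambda_1^{\,n} \); and the leftover powers of \( \sqrt{1-4c} \) in the denominators contribute \( 3n \) (from \( \Lambda_1^{\,n} \)) plus \( 2n(n-1) \) (from \( \prod_{i\ge2}\Lambda_i^{\,n+1-i} \), since \( (1-4c)^{-2\binom{n}{2}}=\sqrt{1-4c}^{\,-2n(n-1)} \)), for a total denominator exponent \( 2n^2+n=\binom{2n+1}{2} \). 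Assembling these factors yields the claimed formula.

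There is no deep conceptual obstacle here; the entire content is the reduction to the classical case via \Cref{prop:R2_Classical} plus the standard Hankel formula. The only care required is in the exponent accounting, specifically in treating \( \Lambda_1 \) separately from the \( \Lambda_i \) with \( i\ge 2 \) and in correctly merging the leading \( \frac{1-\sqrt{1-4c}}{2c} \) factor of \( \Lambda_1^{\,n} \) with \( \mathcal{C}^{n+1} \) to obtain the exponent \( 2n+1 \). I would double-check the three exponent identities \( n+\binom{n}{2}=\binom{n+1}{2} \), \( (n+1)+n=2n+1 \), and \( 3n+2n(n-1)=\binom{2n+1}{2} \) explicitly, as these are exactly where an off-by-one slip could occur.
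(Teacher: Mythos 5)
Your proposal is correct and follows essentially the same route as the paper: reduce to the classical case via \Cref{prop:R2_Classical}, pull out the factor \( \mathcal{C}^{n+1} \) from the Hankel determinant, apply the standard formula \( \det(\mu_{i+j}(\vec B,\vec\Lambda))_{i,j=0}^n=\Lambda_1^n\Lambda_2^{n-1}\cdots\Lambda_n^1 \), and substitute the explicit \( \Lambda_i \). Your exponent bookkeeping (\( n+\binom{n}{2}=\binom{n+1}{2} \), \( (n+1)+n=2n+1 \), and \( 3n+2n(n-1)=\binom{2n+1}{2} \)) all checks out.
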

\begin{proof}
  By \Cref{prop:R2_Classical} and the fact that
  \( \det(\mu_{i+j}(\vec B,\vec \Lambda))_{i,j=0}^n=  \Lambda_1^n\Lambda_{2}^{n-1}\cdots\Lambda_{n}^1\),
 we have 
\begin{align*} 
  \det(\mu_{i+j})_{i,j=0}^n
  &= \mathcal{C}^{n+1}\det(\mu_{i+j}(\vec B,\vec \Lambda))_{i,j=0}^n\\
  &=\mathcal{C}^{n+1}\Lambda_1^n\Lambda_{2}^{n-1}\cdots\Lambda_{n}^1\\ 
  &= \frac{(b^2c+a^2+ab-4c\lambda+\lambda)^{\binom{n+1}{2}} \left(\frac{1-\sqrt{1-4c}}{2c}\right)^{2n+1}}{\sqrt{1-4c}^{\binom{2n+1}{2}}}.\qedhere
\end{align*}
\end{proof}

Now we show that (up to a constant) \( \mu_n\) can also be interpreted
as the moment of orthogonal polynomials of type \( R_I \).

\begin{prop}\label{prop:R2_R1}
  Suppose that \( a_m=a \), \( b_m=b \), \( c_m = c \), and
  \( \lambda_m=\lambda \), for all \( m \). Then
  \[
    \mu_n = \mathcal{C}\cdot \mu_n(\vec B,\vec \Lambda,\vec A).
  \]
    Here, \( \mu_n(\vec B,\vec \Lambda,\vec A) \) is the \( n \)th moment for
  orthogonal polynomials of type \( R_I \) given by
\[ 
 p_{n+1}(x)=(x-B_n)p_n(x)-(A_nx+\Lambda_n) p_{n-1}(x), \qquad  n\ge0, 
\]
where  \( p_{-1}(x)=0,p_0(x)=1 \), and
\begin{align*} 
  B_0&=\frac{b}{\sqrt{1-4c}}, & B_n&=\frac{b}{1-4c},  &n\ge1, \\
  \Lambda_1&=\frac{(b^2c-4c\lambda+\lambda)(1-\sqrt{1-4c})}{2c\sqrt{1-4c}^3}, & \Lambda_{n+1}&=\frac{b^2c-4c\lambda+\lambda}{(1-4c)^2},&n\ge1,\\
  A_1 &= \frac{1-\sqrt{1-4c}}{2c\sqrt{1-4c}}a, & A_{n+1} &= \frac{a}{1-4c}, & n\ge1. 
\end{align*}
\end{prop}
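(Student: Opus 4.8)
The proposition claims that when all recurrence coefficients are constant, the $R_{II}$ moment $\mu_n$ equals $\mathcal{C}$ times the $n$th moment of a specific type-$R_I$ orthogonal polynomial sequence. The method should mirror the proof of the preceding Proposition (the classical case), via continued fractions / generating functions.

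Let me reconstruct the approach.
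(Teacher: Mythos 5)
Your proposal stops before the proof begins: after correctly guessing that the argument should mirror the preceding classical-case proposition via continued fractions, you write ``Let me reconstruct the approach'' and then provide nothing. As it stands there is no proof to evaluate --- no continued fraction is written down, no functional equation is solved, and no algebraic verification is performed. Identifying the right strategy is not the same as executing it, and here all of the mathematical content is in the execution.

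Concretely, the paper's proof (which is indeed the route you gestured at) requires three steps that your proposal omits entirely. First, one must express the generating function \( \sum_{n\ge0}\mu_n(\vec B,\vec\Lambda,\vec A)x^n \) as the continued fraction
\[
  \cfrac{1}{1-B_0x-\cfrac{A_1x+\Lambda_1x^2}{1-B_1x-\cfrac{A_2x+\Lambda_2x^2}{1-B_2x-\cdots}}},
\]
which is the \( R_I \) analogue of Flajolet's theorem (the partial numerators are \( A_nx+\Lambda_nx^2 \) rather than \( \Lambda_nx^2 \)); this needs to be invoked or justified. Second, since \( B_n \), \( \Lambda_{n+1} \), and \( A_{n+1} \) are constant for \( n\ge1 \), the tail \( g(x) \) of the continued fraction satisfies \( g(x)=1/(1-B_1x-(A_2x+\Lambda_2x^2)g(x)) \), a quadratic that must be solved explicitly, yielding
\( 2/\bigl(1-B_1x+\sqrt{1-(4A_2+2B_1)x-(4\Lambda_2-B_1^2)x^2}\,\bigr) \).
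Third --- and this is where the specific values of \( B_0,B_1,\Lambda_1,\Lambda_2,A_1,A_2 \) actually matter --- one must substitute those values and check by direct computation that
\( \mathcal{C}\sum_{n\ge0}\mu_n(\vec B,\vec\Lambda,\vec A)x^n \) collapses to
\( \mu(x)=\bigl(1-bx-\sqrt{(1-4c)-(4a+2b)x-(4\lambda-b^2)x^2}\bigr)/\bigl(2(c+ax+\lambda x^2)\bigr) \),
the generating function for \( \mu_n \) obtained earlier from the first-return decomposition of \( R_{II} \) paths. None of this appears in your write-up, so the proof must be considered missing rather than merely sketched.
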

\begin{proof}
  The proof is similar to the proof of~\Cref{prop:R2_Classical}. By a
  direct computation, we obtain
  \begin{align*}
  \mathcal{C}  \sum_{n\ge0} \mu_n(\vec B,\vec \Lambda,\vec A) x^n
    &=\cfrac{\mathcal{C}}{1-B_0x-\cfrac{A_1x+\Lambda_1x^2}{1-B_1x-\cfrac{A_2x+\Lambda_2x^2}{1-B_2x-\cdots}}}\\
    &= \cfrac{\mathcal{C}}{1-B_0x-\cfrac{2A_1x+2\Lambda_1x^2}{1-B_1x+\sqrt{1-(4A_2+2B_1)x-(4\Lambda_2-B_1^2)x^2}}}\\
   &= \mu(x). \qedhere
  \end{align*}
\end{proof}

The moment \( \mu_n(\vec B, \vec \Lambda) \)
(resp.~\( \mu_n(\vec B, \vec \Lambda,\vec A) \)) of (classical)
orthogonal polynomials (resp.~orthogonal polynomials of type
\( R_I \)) is the generating function for Motzkin paths
(resp.~Motzkin--Schr\"{o}der paths) from \( (0,0) \) to \( (n,0) \),
which is a finite sum. Hence, Propositions~\ref{prop:R2_Classical} and
\ref{prop:R2_R1} provide practical methods for computing \( \mu_n \).
At this point, we can pose natural questions:
\begin{enumerate}
\item Are the path models in Propositions~\ref{prop:R2_Classical} and
  \ref{prop:R2_R1} applicable for \( \mu_{n,m} \) or
  \( \mu_{n,r,s} \)?
\item If not, are there Motzkin or Motzkin--Schr\"oder paths with
  different weights that can be used for computing \( \mu_{n,m} \) or
  \( \mu_{n,r,s} \)?
\end{enumerate}

The answer to the first question is ``no''. For instance, one can
verify that the coefficient \( [c]\mu_{2,1} \) of \( c \) in
\( \mu_{2,1} \) is equal to \( 10a+8b \). In contrast, let
\( \hat{\mu}_{2,1} \) be the weight sum of Motzkin paths
(resp.~Motzkin--Schr\"oder paths) from \( (0,0) \) to \( (2,1) \) with
the weights given in~\Cref{prop:R2_Classical}
(resp.~\Cref{prop:R2_R1}). Then \( [c]\hat{\mu}_{2,1} = 11a+6b \)
(resp.~\( [c]\hat{\mu}_{2,1} = 7a+6b \)).

Regarding the second question, we observe that \( \mu_{i,i} = \mu_{i,0,i} = {\mathcal C}^{i+1} \).
This implies that it is impossible to find classical or type \( R_I \) orthogonal polynomials whose generalized moments \( \widetilde{\mu}_{n,r,s} \) are the same as \( \mu_{n,r,s} \), since \( \widetilde{\mu}_{i,0,i} = 1 \).
One might consider introducing a scaling factor \( F \) such that \( \mu_{n,r,s} = F\cdot \widetilde{\mu}_{n,r,s} \) as we have \( F={\mathcal C} \) when \( r=s=0 \).
However, we note that it is not possible to interpret
\( \mu_{n,r,s} \) as a weight sum of Motzkin--Schr\"{o}der paths.
In fact, one can verify that
\begin{equation}\label{eq:22}
  \mu_{0,1,0} = \left( \frac{1-\sqrt{1-4c}}{2c} \right)^2\cdot\frac{a+2bc}{\sqrt{1-4c}} \quad\text{and}\quad 
  \mu_{0,2,1} = \mu_{0,1,0}\cdot\frac{1-c-\sqrt{1-4c}}{c}.
\end{equation}
To interpret these using Motzkin--Schr\"{o}der paths,
\( \mu_{0,i,i-1} \) must be the weight \( A_i \) of a vertical down
step \( (0,-1) \) starting at height \( i \).
This implies that
\( \mu_{0,2,0} = A_2A_1 = \mu_{0,2,1} \mu_{0,1,0} \), which
does not contain any \( \lambda \) by \eqref{eq:22}.
This contradicts the fact that \( \mu_{0,2,0} \) must contain a term
\( \lambda c \), since \( DB \) is an \( R_{II} \) path from
\( (0,2) \) to \( (0,0) \) whose weight is \( \lambda c \).

Therefore, we need to consider combinatorial models different from
Motzkin--Schr\"oder paths.
In the literature, there are other lattice path models
for orthogonal polynomials.
For example, Jang and Song \cite{Jang2024}
provide lattice paths called gentle Motzkin paths
for their study of orthogonal polynomials on the unit circle.

We end this section with the following open problem.
\begin{problem}
  Find a lattice path model that can be used to calculate
  \( \mu_{n,m} \) or \( \mu_{n,r,s} \) in a finite process.
\end{problem}

\bibliographystyle{abbrv}

\end{document}